\title{A comparison of European and Asian options under Markov additive processes}
\author{Larbi Alili\footnote{The University of Warwick, CV47AL, Coventry, UK. \textit{E-mail}: \textbf{l.alili@warwick.ac.uk}} \:and David Woodford\footnote{The University of Warwick, CV47AL, Coventry, UK. \textit{E-mail}: \textbf{david.l.woodford@bath.edu}}\footnote{This work was supported by EPSRC as part of the MASDOC DTC, Grant No. EP/H023364/1.}}
\date{\today}
\begin{document}

\maketitle

\begin{abstract}
  We provide results relating to the integrability, uniform integrability and local integrability of exponential MAPs, which are natural extensions of exponential \levy models. Then, we use Mellin transform and partial integro-differential equation methods to value European options under a such a model. Finally, a comparison is made between the price of a European call option and that of an Asian call option. 
\end{abstract}

\keywords{Markov additive process, European option, Asian option, Mellin transform, Integrability, Martingale}

\subject{ Primary 60J75; 91B70; Secondary 60G99; 91G20}


\section{Introduction}
The aim of this paper is to study and compare European and Asian option prices when the underlying asset is modelled by the exponential of a \textit{Markov Additive Process (MAP)}. Such a model is considered in \cite{palmowski2019} in the context of optimal portfolio selection, whilst several authors have considered the pricing of options under various subspaces of MAPs, for example see \cite{bo2010markov},\cite{ellioSiu13} and \cite{naik93}. However, we are unaware of any examples in the literature that consider option pricing with the full range of MAPs.

Consider a process $\{(J_t,\xi_t):t\geq0\}$ taking values in $E\times\R$, for some finite set $E$, adapted to a filtration $\left(\mathcal{F}_t\right)_{t\geq0}$. This process is a MAP if, for any bounded continuous function $f:E\times\R\rightarrow\R^+$ and $s,t>0$,
\begin{align}
    \label{eqn:MapDefn}
    \E\left[ f(J_{t+s},\xi_{t+s}-\xi_t) \:\middle|\: \mathcal{F}_t \right]
    =\E_{J_t,0}\left[ f(J_s,\xi_s) \right],
\end{align}
where $\E_{\alpha,x}$ denotes expectation conditional on $(J_0,\xi_0)=(\alpha,x)\in E\times\R$. For each $\alpha\in E$, the notation $\Prob_\alpha$ denotes the probability measure conditional on $J_0=\alpha$ and $\E_\alpha$ will denote the corresponding expectation.  A more general definition of a MAP may allow $E$ to be countable or even uncountable, as in, for example, \cite{asmussen2003applied}. In the model considered in this paper, $\exp(\xi_t)$ corresponds to the price of a risky asset at time $t>0$ and $J_t$ corresponds to some notion of the market regime, at time $t$. In our examples, we consider the Lamperti-Kiu case from \cite{Rivero2011}, where the state space of $J$ is $\{+,-\}$.

The problem of optimal portfolio selection for this model was considered in \cite{palmowski2019}. Within \cite{palmowski2019}, it was also shown that there is an enlarged market, which is arbitrage free and complete, and which has an equivalent martingale measure.
Markov modulated jump diffusions were proposed in \cite{naik93}, where the asset price follows a geometric Brownian motion with parameters depending upon a continuous time, finite state space Markov chain. This markov chain denotes the `state' of the market. This model also allows additional jumps in the price process when the market state changes. However, this model does not include the presence of jumps during the time spent in each market state. Similar models are considered in, for example, \cite{nikita08}.

Elliot and Siu considered models in \cite{ellioSiu13}, \cite{elliot07} and \cite{siu14}, where the asset price process follows the exponential of a \levy process, with a \levy measure and drift determined by a Markov chain denoting the market state. They use a generalisation of the Essher transform to obtain an equivalent martingale measure, under which a Partial Integro-Differential Equation (PIDE) can be derived to value European options. However, these models don't allow for a jump in price caused by a change in the market state and assume that the diffusion coefficient is constant. Similar models are also considered in, for example, \cite{COSTABILE14} and \cite{ramponi12}.

In \cite{bo2010markov}, a Markov modulated jump diffusion is suggested as a model for FX spot rates, where the states of the Markov chain correspond to the sovereign rating of the corresponding countries or regions. The model considered is the exponential of a Markov modulated compound Poisson process, with log-normal jumps. However, there are no jumps assoicated with the rating changes. The authors are able to use a generalised Esscher transform to value European options under this model.



One often quoted reason for buying Asian put and call options rather than European ones is that they can be cheaper. However, we show that in some cases, when the strike is near the origin, an Asian call option is more expensive than the corresponding European one. This is analogous to the result under the Black-Scholes model given in \cite[Chapter 5, Proposition 3.1]{gemanYor}. 

This paper is set out as follows. 
In Section \ref{sec:integrability}, we provide some results on the integrability and uniform integrability of $Y_t\coloneqq\exp(\xi_t)$ which are needed within the later sections.
In Section \ref{sec:europeanOptions}, 
we adapt the Mellin transform and PIDE methods of \levy processes to price European options under an exponential MAP model. 
In Section \ref{sec:martingaleConditions}, we look at the martingale properties of an exponential MAP and use this to compare the prices of Asian and European options.

\section{Integrability and Uniform Integrability of exponential MAPs}
\label{sec:integrability}
    Let $(J,\xi)$ be a MAP and $Y_t\coloneqq \exp(\xi_t)$ for all $t\geq 0$.
    In Sections \ref{sec:europeanOptions} and \ref{sec:martingaleConditions} we will want to consider the expectation of $Y$.
    Equivalent conditions for the existence of this expectation are given by Theorem \ref{thm:intLampKiu}. 

    There is a well known decomposition of MAPs, for example see \cite[pp 310, Part C, Chapter XI, Section 2]{asmussen2003applied}.
    It states that, for each $\alpha\in E$, there exists a \levy process,  $\xi^{(\alpha)}$, with characteristic triplets $(a_\alpha,\sigma_\alpha,\mu_\alpha)$. For each $\alpha,\beta\in E$, there exists an exponentially distributed random variable, $\zeta_{\alpha,\beta}$, with rate $q_{\alpha,\beta}\geq0$ and a random variable,  $U_{\alpha,\beta}$, taking values in $\R$, with measure $\nu_{\alpha,\beta}$. There are sequences $(\xi^{(\alpha,k)})_{k\in\N}, (\zeta_{\alpha,\beta,k})_{k\in\N}$ and $(U_{\alpha,\beta,k})_{k\in\N}$, which are i.i.d. copies of $\xi^{(\alpha)}$, $\zeta_{\alpha,\beta}$ and $U_{\alpha,\beta}$, respectively, such that the sequences are also independent of each other. To simplify notation, also define $q_\alpha\coloneqq -q_{\alpha,\alpha}\coloneqq\sum_{\gamma\in E\setminus\{\gamma\}}q_{\alpha,\gamma}$.
    
    These objects are such that the process $J$ is a continuous time Markov chain with transition rate matrix $(q_{\alpha,\beta})_{\alpha,\beta\in E}$. Let $\{T_n\}_{n\in\N_0}$ denote the times at which $J_t$ changes value, with the convention $T_0=0$. Then, $\zeta_{k}\coloneqq\zeta_{J_{T_{k-1}},T_k,k}=T_{k}-T_{k-1}$ for each $k\in\N$, hence, $T_n:= \sum_{k=0}^{n-1}\zeta_k$. Setting,
    \begin{align*}
        N_t         := \max_{n\in\N_0}\left\{T_n \leq t\right\}, 
        \qquad\text{and}\qquad
        \sigma_t    := t - T_{N_t},
    \end{align*}
    the process $(\xi_t,t\geq0)$ is given by
    \begin{align}
        \label{eqn:lampKiuDecomp}
        \xi_t &= \xi^{(N_t)}_{\sigma_t} + \sum_{k=0}^{N_t-1}\left( \xi^{(k)}_{\zeta_k-} + U_k \right),
        \qquad t\geq0,
    \end{align}
    where, for each $k\in\N$, we let $\xi^{(k)}:=\xi^{(J_{T_{k-1}},J_{T_k})}$  and $U_k:=U_{J_{T_{k-1}},J_{T_k}}$.
    
    Moreover, any process of this form is a MAP. We will  assume throughout that $J$ is irreducible and ergodic.

    
    
   We will also consider the matrix exponent of a MAP from \cite[pp 311, Section XI.2b]{asmussen2003applied}. For all $z\in\C$, such that $\E\left[e^{z\xi_1}\right]<\infty$,  it was shown that there exists a matrix $F(z)\in\C^{|E|\times|E|}$, such that for all $\alpha,\beta\in E$ and $t\geq0$,
    \begin{equation}
        \E\left[ e^{z\xi_t}; \: J_t = \beta \:\middle|\: J_0=\alpha \right] = \left(e^{tF(z)}\right)_{\alpha,\beta}.
    \end{equation}
    Moreover, $F(z)$ is of the form
    \begin{equation}
      \label{eqn:exponentExplicit}
      \left(F(z)\right)_{\alpha,\beta} = \begin{cases}
        \psi_\alpha(z) - q_{\alpha},    &\text{ if } \alpha=\beta;\\
        q_{\alpha,\beta}G_{\alpha,\beta}(z),                   &\text{ if } \alpha\neq \beta;
      \end{cases}
    \end{equation}
    where, $\psi_\beta(z):=\log \E[\exp(z\xi^{(\beta)}_1)]$ and $G_{\alpha,\beta}(z)=\E[\exp(zU_{\alpha,\beta})]$, for all $\alpha,\beta\in E$. A detailed discussion of the matrix exponent $F$ can be found in \cite[pp 311, Section XI.2b]{asmussen2003applied}. 

    %
    
    Suppose that $\kappa(z)$ is the principal eigenvalue of $F(z)$, that is, $\kappa(z)$ is the eigenvalue with largest real part. Then, it is known (see for example \cite[pp 9, Proposition 3.4]{Kuznetsov2014} and \cite{stephenson2017expfunc}) that $\kappa(0)=0$ and $\kappa$ is a continuous, convex function, over the regions of $\R$ in which it is well defined. The MAP $(J,\xi)$ is said to satisfy \cramers condition if there exists some $\theta>0$ such that $\kappa(\theta)=0$. Then, $\theta$ is referred to as \cramers number. If \cramers condition is not satisfied and $\kappa(z)<0$ for all $z>0$, set $\theta=+\infty$, whilst if $\kappa(z)>0$ for all $z>0$, set $\theta=0$. We then refer to $\theta$ as the extended \cramers number.
    
    An integrability result can now be established for $Y$, which will be required in Sections \ref{sec:europeanOptions} and \ref{sec:martingaleConditions}.
    
    \begin{thm}[Integrability of an exponential MAP]
        \label{thm:intLampKiu}
        Suppose  $(J,\xi)$ is a MAP, with decomposition (\ref{eqn:lampKiuDecomp}) and matrix exponent $F$, and that $Y_t\coloneqq\exp(\xi_t)$ for all $t\geq0$. Then, for all $p>0$, the following are equivalent:
        \begin{enumerate}
            \item \label{list:integrability:Lp}  $\E[Y_t^p]<\infty$ for all $t\geq0$;
            \item \label{list:integrabilityLpatT} There exists some $T>0$ such that $\E[Y_T^p]<\infty;$
            \item \label{list:integrability:local} $\{Y_t^p:t\geq0\}$ is locally integrable;
            \item \label{list:integrability:decomp} $\E\left[\exp\left(p\xi_1^{(\alpha)}\right)\right]<\infty$ and $\E\left[\exp\left(pU_{\alpha,\beta}\right)\right]<\infty$, for all $\alpha,\beta\in E$;
            \item \label{list:integrability:measures} $\mu_\alpha$ and $\nu_{\alpha,\beta}$ have $p$-exponential moments for all $\alpha,\beta\in E$;
            \item \label{list:integrability:Fp} $F(p)$ exists.
        \end{enumerate}
        Moreover, if a \cramers number, $\theta$, exists, then $\{Y^p_t:t\geq0\}$ is uniformly integrable, if and only if, $\theta>p$.
        
        \if{} 
                    Suppose that $(J,\xi)$ is a MAP with Lamperti-Kiu decomposition (\ref{eqn:LKdecomp}). Then, for all $t\geq0$, $Y_t^{(x)}:=x\exp(\xi_t)$ is integrable if and only if $\xi_1^\sigma$ and $U^{(\alpha,\beta)}$ have exponential moments for all $\sigma,\alpha,\beta\in E$. Moreover, if $Y$ is integrable, then there exists $\tilde{B},\hat{B}\in[0,\infty)$ such that
                    $
                        \tilde{B}^t \leq \E\left[\left|Y_t^{(1)}\right|\right] \leq \hat{B}^t.
                    $
        \fi
    \end{thm}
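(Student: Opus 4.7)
My plan is to prove the cycle $1 \Rightarrow 2 \Rightarrow 4 \Leftrightarrow 5 \Leftrightarrow 6 \Rightarrow 1$, to show $1 \Leftrightarrow 3$ separately, and to treat uniform integrability via matrix-exponent asymptotics. The implication $1 \Rightarrow 2$ is immediate and the equivalence $4 \Leftrightarrow 6$ follows from the entrywise form (\ref{eqn:exponentExplicit}): finiteness of $F(p)$ reduces to finiteness of each $\psi_\alpha(p) = \log\E[\exp(p\xi^{(\alpha)}_1)]$ and each $G_{\alpha,\beta}(p) = \E[\exp(pU_{\alpha,\beta})]$ with $q_{\alpha,\beta}>0$. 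For $4 \Leftrightarrow 5$, I would invoke the standard \levy-process fact that $\E[\exp(p\xi^{(\alpha)}_1)]$ is finite iff $\mu_\alpha$ has a $p$-exponential moment (Sato, Theorem~25.3), together with the identity $\E[\exp(pU_{\alpha,\beta})] = \int e^{px}\,\nu_{\alpha,\beta}(dx)$.

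For $4 \Rightarrow 1$, I would condition on the trajectory of $J$. By the decomposition (\ref{eqn:lampKiuDecomp}) and the independence of the \levy and jump families from $J$, the conditional expectation $\E[Y_t^p \mid J]$ factorises as a product in which each \levy factor equals $\exp(\zeta_k \psi_{J_{T_{k-1}}}(p))$ and each jump factor equals $G_{J_{T_{k-1}},J_{T_k}}(p)$. Under condition~4 each of these is finite; bounding $\sum_k \zeta_k \leq t$ and using that $N_t$ has finite exponential moments (since the $J$-jump rates $q_\alpha$ are bounded on a finite state space) lets me take the outer expectation and deduce $\E[Y_t^p]<\infty$ for every $t$.

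The main obstacle is $2 \Rightarrow 4$. The idea is to isolate each individual component via a positive-probability $J$-event. To recover $\xi^{(\alpha)}$, I would use that under $\Prob_\alpha$ the event $\{J_s = \alpha \text{ for all } s \in [0,T]\}$ has probability $e^{-q_\alpha T}>0$, is independent of the initial \levy piece, and on it $\xi_T = \xi^{(\alpha,0)}_T$; hence $\E_\alpha[Y_T^p] \geq e^{-q_\alpha T}\E[\exp(p\xi^{(\alpha)}_T)]$, forcing $\E[\exp(p\xi^{(\alpha)}_T)]<\infty$, and the standard \levy lift to $t=1$ gives one half of~4. To recover $U_{\alpha,\beta}$ for $q_{\alpha,\beta}>0$, I would restrict further to the event that exactly one $J$-jump occurs in $[0,T]$, it goes from $\alpha$ to $\beta$, and the pre- and post-jump \levy pieces lie in $[-1,1]$; this event has positive probability and on it $\xi_T = U_{\alpha,\beta} + O(1)$, yielding $\E[\exp(pU_{\alpha,\beta})]<\infty$. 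The equivalence $1 \Leftrightarrow 3$ is then routine: for a nonnegative adapted process, local integrability with the localising sequence $\tau_n = n$ is exactly condition~1.

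For the uniform-integrability claim, assume the \cramers number $\theta$ exists. If $\theta > p$, pick $p' \in (p,\theta)$; convexity of $\kappa$ with $\kappa(0)=\kappa(\theta)=0$ gives $\kappa(p') \leq 0$, and writing $\E_\alpha[Y_t^{p'}] = \sum_\beta (e^{tF(p')})_{\alpha,\beta}$ together with a Perron-Frobenius analysis of $e^{tF(p')}$ (using irreducibility) yields $\sup_{t\geq 0}\E_\alpha[Y_t^{p'}]<\infty$, hence UI of $\{Y_t^p\}$ by de la Vall\'ee Poussin. Conversely, $\theta < p$ forces $\kappa(p)>0$ by convexity, so $\E_\alpha[Y_t^p]\to\infty$ and UI fails; the boundary case $\theta = p$, where $\kappa(p)=0$, is the delicate one, and I would handle it by introducing the Perron-Frobenius martingale $M_t := h(J_t)Y_t^p/h(J_0)$, where $h$ is the right eigenvector of $F(p)$ for eigenvalue $0$, and showing via the associated $h$-transform that $M_t \to 0$ almost surely, so $\{M_t\}$, and hence $\{Y_t^p\}$, fails to be uniformly integrable.
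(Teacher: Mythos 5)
There is one genuine gap: your treatment of statement (\ref{list:integrability:local}). You assert that ``local integrability with the localising sequence $\tau_n=n$ is exactly condition 1,'' but local integrability is an existential statement --- it asserts only that \emph{some} sequence of stopping times $\tau_n\uparrow\infty$ renders the stopped process integrable, and you are not entitled to replace an arbitrary localising sequence by $\tau_n=n$. The implication (\ref{list:integrability:Lp})$\Rightarrow$(\ref{list:integrability:local}) is indeed trivial, but (\ref{list:integrability:local})$\Rightarrow$(\ref{list:integrability:Lp}) is the substantive claim: it says there are no \emph{strictly} locally integrable exponential MAPs, the analogue of the corresponding fact for \levy processes, and it requires an argument. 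The paper proves it in Lemma \ref{lem:noStrictLoclMG} by taking an arbitrary localising sequence $\{\tau_n\}$, forming $\tau:=\min_n\{\tau_n:\tau_n>T_1\}$, and computing
\begin{align*}
\E_\alpha[Y_{\tau\wedge T_1}]=\E\left[\exp\left(\xi^{(\alpha)}_{T_1}\right)\right]\sum_{\beta\in E\setminus\{\alpha\}}\E\left[\exp(U_{\alpha,\beta})\right]\frac{q_{\alpha,\beta}}{q_{\alpha,\alpha}},
\end{align*}
whose finiteness is forced by local integrability and yields exactly condition (\ref{list:integrability:decomp}); an irreducibility argument then transfers this between initial states. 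You need some version of this step --- as written, your proof of the equivalence with (\ref{list:integrability:local}) does not go through. (A smaller instance of the same issue appears in your $2\Rightarrow4$ step, which argues under $\Prob_\alpha$; if the initial law does not charge $\alpha$ you must first run $J$ to $\alpha$ using irreducibility, as the paper does implicitly.)

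The remainder is sound and worth comparing. Your $4\Rightarrow1$ by conditioning on the $J$-trajectory and dominating $N_t$ by a Poisson process is exactly the paper's Lemma \ref{lem:Lamperti-Kiu-integrability}, and $4\Leftrightarrow5\Leftrightarrow6$ matches the paper. Your uniform-integrability argument, however, takes a genuinely different route: for $\theta>p$ you obtain $\sup_t\E[Y_t^{p'}]<\infty$ for some $p'\in(p,\theta)$ from $\kappa(p')\le0$ and conclude by de la Vall\'ee Poussin, which neatly avoids the maximal inequality of Lemma \ref{lem:willekensMAP} that the paper needs to control $\sup_{t\in[0,T]}\E[Y_t;Y_t>K]$; for $\theta\le p$ your Perron--Frobenius martingale $h(J_t)Y_t^p/h(J_0)$ converging to zero a.s.\ while keeping unit expectation is a clean repackaging of the paper's argument that $\E_h[Y_t^p]$ stays bounded below while $Y_t^p\to0$ in probability. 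Both are correct; just note that when $\theta<p$ the matrix $F(p)$ may fail to exist, in which case integrability itself fails and non-UI is immediate.
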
    
    
    This theorem is of particular interest as it relates the integrability of $Y$ with properties of the components of its decomposition (\ref{eqn:lampKiuDecomp}). It also shows that existence of $F(p)$ is necessary and sufficient for integrability of $Y^p$ without conditioning on the states of $J$. Statement (\ref{list:integrability:local}) extends the corresponding result of \levy processes (see \cite[Exercise 29, pp 49]{protter2005stochastic}) and ensures that any local martingale results for $Y$ immediately transfer to martingale results.

    \begin{rem}
      \label{rem:integrability:p=1}
      Notice that $Y^p_t=\exp(p\xi_t)$ and that $(J,p\xi)$ is also a MAP. The \levy processes in the decomposition (\ref{eqn:lampKiuDecomp}) of $(J,p\xi)$ are given by $(p\xi^{(\alpha)})_{\alpha\in E}$ and the jumps induced by changes of $J$ are $(pU_{\alpha,\beta})_{\alpha,\beta\in E}$. The matrix exponent of $(J,p\xi)$ is $F(pz)$, hence if $\theta$ is \cramers number for $(J,\xi)$, then $\theta/p$ is \cramers number for $(J,p\xi)$. Therefore, we need only consider the case $p=1$. 
    \end{rem}
    
    The proof of Theorem \ref{thm:intLampKiu} makes use of the following lemma, which is due to the fact $\{N_t:t\geq0\}$ is an alternating renewal process. 

    \begin{lem}
        \label{lem:bddNt}
        There exists a Poisson process, $\{\eta_t:t\geq0\}$, of rate $\lambda\coloneqq\max_{\alpha\in E}q_\alpha$, such that $N_t\leq\eta_t$, for all $t\geq0$.
    \end{lem}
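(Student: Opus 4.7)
The plan is to build $\eta$ by uniformization of the continuous-time Markov chain $J$. Since $q_\alpha \leq \lambda$ for every $\alpha \in E$, at each state the jump rate can be realised as a thinned version of a homogeneous Poisson process of rate $\lambda$.

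Concretely, I would start with an independently constructed Poisson process $\eta$ of rate $\lambda$, with event times $0 < S_1 < S_2 < \cdots$, together with an independent sequence of i.i.d.\ uniform variables $(V_k)_{k \geq 1}$ on $[0,1]$ and an independent family used to choose the new states. I would then define a process $\tilde J$ as follows: $\tilde J$ is constant on intervals of the form $[S_{k-1}, S_k)$; at time $S_k$, given $\tilde J_{S_k-} = \alpha$, if $V_k \leq q_\alpha/\lambda$ we declare a ``real'' jump and sample $\tilde J_{S_k}$ from the distribution $\beta \mapsto q_{\alpha,\beta}/q_\alpha$ on $E\setminus\{\alpha\}$; otherwise we set $\tilde J_{S_k} = \alpha$ (a ``rejected'' event). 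Let $\tilde N_t$ count the accepted events up to time $t$.

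Next I would check that $(\tilde J, \tilde N)$ has the same joint law as $(J, N)$. The key computation is that the time spent in state $\alpha$ under this construction is the sum of a $\mathrm{Geom}(q_\alpha/\lambda)$ number of i.i.d.\ $\mathrm{Exp}(\lambda)$ inter-event times, which is $\mathrm{Exp}(q_\alpha)$; and conditional on a real jump occurring in state $\alpha$, the post-jump state has distribution $q_{\alpha,\beta}/q_\alpha$. By the strong Markov property this yields the correct finite-dimensional distributions of the chain $J$ with generator $(q_{\alpha,\beta})$, and $\tilde N$ then coincides in law with $N$. Since $\tilde N_t \leq \eta_t$ by construction (accepted events form a subset of Poisson events), we may replace $(J, N)$ by $(\tilde J, \tilde N)$ without loss of generality and take $\eta$ to be the ambient Poisson process.

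I do not expect any serious obstacle; the only thing that requires care is confirming that the uniformised construction really does reproduce the law of the CTMC (so that the almost-sure inequality $N_t \leq \eta_t$ can be asserted on the original probability space up to equality in distribution, which is all that Sections \ref{sec:europeanOptions} and \ref{sec:martingaleConditions} will need). An alternative, more explicit route would couple inductively: given the chain up to its $(k-1)$-st jump, write the $\mathrm{Exp}(q_{J_{T_{k-1}}})$ holding time as the minimum of $\mathrm{Exp}(q_{J_{T_{k-1}}})$ and $\mathrm{Exp}(\lambda - q_{J_{T_{k-1}}})$ independent exponentials of total rate $\lambda$, and let the latter family generate the extra Poisson events; but the uniformisation argument above is cleaner.
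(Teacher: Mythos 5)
Your uniformisation argument is correct in substance, but it takes a genuinely different route from the paper. The paper couples pathwise on the original probability space: it writes each holding time as $\zeta_i=-\log(X_i)/q_i$ for a uniform $X_i$ (so $X_i$ is a measurable function of the given data), observes that $q_i\leq\lambda$ gives $\zeta_i\geq-\log(X_i)/\lambda=:\hat{\zeta}_i$ term by term, and hence $T_n\geq\hat{T}_n$, so the renewal process built from the $\hat{\zeta}_i$ dominates $N$ almost surely without ever leaving the original space. Your thinning construction instead builds a fresh pair $(\tilde J,\tilde N)$ dominated by an ambient Poisson process and then identifies its law with that of $(J,N)$; this delivers the inequality only for a distributional copy, a caveat you correctly flag and which indeed suffices for the way the lemma is used (bounding $\E[\hat V^{N_t}]$ by $\E[\hat V^{\eta_t}]$, a monotone functional). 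What uniformisation buys is a standard, reusable construction and an intuitive picture of $N$ as a thinned Poisson process; what the paper's coupling buys is brevity and a literal pathwise inequality on the given space, which is what the statement of the lemma actually asserts. One small slip in your closing remark: the $\Exp(q_{\alpha})$ holding time is not the \emph{minimum} of independent $\Exp(q_{\alpha})$ and $\Exp(\lambda-q_{\alpha})$ variables (that minimum is $\Exp(\lambda)$); the intended superposition coupling is that the rate-$\lambda$ event stream is the union of the real jumps and an independent phantom stream of rate $\lambda-q_{\alpha}$, so the real jump occurs no earlier than the first event of the combined stream.
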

    \begin{proof}
        \if{} 
                Recalling the Lamperti-Kiu decomposition (\ref{eqn:LKdecomp}), 
                define a new sequence of exponentially distributed random variables $\hat{\zeta}^k$, with rate $\lambda$. Then, for any positive bounded decreasing function $f:\R\rightarrow[0,\infty)$, we have 
                    $\E[f(\hat{\zeta}^k)] \geq \E[f(\zeta^k)]$ 
                for all $k\in\N_0$.
                
                Fix $t\geq0$ and consider  $N_t$ as a function of $(\zeta^k)_{k\in\N}$. Then, $N_t$ is a weakly decreasing function with respect to each $\zeta^k$. Thus, for any positive bounded increasing function $g:\R\rightarrow[0,\infty)$, notice that $g(N_t)$ is weakly decreasing with respect to each $\zeta^k$ and so
                \begin{equation*}
                    \E\left[ g(N_t(\hat{\zeta}^1,\hat{\zeta}^2,\dots)) \right] \geq \E\left[ g(N_t(\zeta^1,\zeta^2,\dots)) \right].
                \end{equation*}
                However, since the $\left(\hat{\zeta}^k\right)_{k\in\N_0}$ are i.i.d. exponential random variables of rate $\lambda$, \\ $P_t:=N_t(\hat{\zeta}^1,\hat{\zeta}^2,\dots)$ is a Poisson process of rate $\lambda$.
        \fi{}
        For each $i\in\N_0$, since $\zeta_i\sim\Exp(q_i),$ there is a random variable $X_i$, uniformly distributed on $[0,1]$, such that $\zeta_i = -\log(X_i)/q_i$. For each $n\in\N_0$, we have the inequality
        \begin{align*}
            T_n 
          \coloneqq \sum_{i=0}^{n-1}
          \zeta_i
            = \sum_{i=0}^{n-1}\frac{-\log(X_i)}{q_i}
            \geq \sum_{i=0}^{n-1}\frac{-\log(X_i)}{\lambda}
            = \sum_{i=0}^{n-1} \hat{\zeta}_i
            \eqqcolon \hat{T}_n,
        \end{align*}
        where $\{\hat{\zeta}_i\}_{i\in\N_0}$ is an i.i.d. sequence of exponential random variables of rate $\lambda$. Then, $\{\eta_t\coloneqq \text{argmax}_{n\in\N}\{T_n<t\}:t\geq0\}$, is Poisson process of rate $\lambda$, such that $\eta_t\geq N_t$, for all $t\geq0$.
        
    \end{proof}
    
    The following lemma establishes the equivalence of statements (\ref{list:integrability:Lp}) and (\ref{list:integrability:decomp}) of Theorem \ref{thm:intLampKiu}.
    \begin{lem}
        \label{lem:Lamperti-Kiu-integrability}
        $Y_t$ is integrable for all $t\geq0$, if and only if, $\E\left[\exp\left(\xi_1^{(\alpha)}\right)\right]<\infty$ and $\E\left[\exp\left(U_{\alpha,\beta}\right)\right]<\infty$, for all $\alpha,\beta\in E$. 
    \end{lem}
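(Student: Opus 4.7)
The plan is to exploit decomposition (\ref{eqn:lampKiuDecomp}) by conditioning on the full trajectory of $J$ up to time $t$, so that the L\'evy pieces and transition jumps become conditionally independent with the prescribed distributions, and then to control the number of transitions via Lemma \ref{lem:bddNt}.

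For the \emph{``if''} direction, I would condition on $\mathcal{G}_t := \sigma(J_s : 0 \le s \le t)$. Given $\mathcal{G}_t$ the sojourn times, $N_t$, and the visited states are all determined, while the $\xi^{(k)}$ and $U_k$ retain their unconditional distributions and are mutually independent. Writing $\psi_\alpha(1) := \log \E[e^{\xi^{(\alpha)}_1}]$ and $G_{\alpha,\beta}(1) := \E[e^{U_{\alpha,\beta}}]$, both finite by hypothesis, this yields
\[
\E[Y_t \mid \mathcal{G}_t] = \exp\!\Bigl(\sigma_t\,\psi_{J_{T_{N_t}}}(1) + \sum_{k=0}^{N_t-1}\zeta_{k+1}\,\psi_{J_{T_k}}(1)\Bigr)\,\prod_{k=1}^{N_t} G_{J_{T_{k-1}},J_{T_k}}(1).
\]
Because the total L\'evy running time equals $t$, the exponential factor is bounded by $e^{t\psi_*^+}$ with $\psi_*^+ := \max_{\alpha\in E}(\psi_\alpha(1) \vee 0)$, and the product is bounded by $G_*^{N_t}$ with $G_* := \max_{\alpha,\beta\in E}(G_{\alpha,\beta}(1) \vee 1)$. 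Taking unconditional expectations and applying Lemma \ref{lem:bddNt} to dominate $N_t$ by a Poisson$(\lambda t)$ variable $\eta_t$ then gives
\[
\E[Y_t] \le e^{t\psi_*^+}\,\E[G_*^{\eta_t}] = e^{t\psi_*^+ + \lambda t (G_* - 1)} < \infty.
\]

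For the converse I would carve out events on which a single component of the decomposition can be isolated. To recover a finite exponential moment for $\xi^{(\alpha)}$, restrict under $\Prob_\alpha$ to $\{T_1 > t\}$: on this event $\xi_t = \xi^{(\alpha)}_t$, and since $T_1 \sim \Exp(q_\alpha)$ is independent of $\xi^{(\alpha)}$,
\[
\E_\alpha[Y_t;\,T_1 > t] = e^{-q_\alpha t}\,\E[e^{\xi^{(\alpha)}_t}],
\]
which forces $\E[e^{\xi^{(\alpha)}_t}] < \infty$, and hence $\E[e^{\xi^{(\alpha)}_1}] < \infty$ by the usual moment property of L\'evy processes. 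For the transition jumps $U_{\alpha,\beta}$ (when $q_{\alpha,\beta} > 0$), I would restrict instead to $\{T_1 \le t,\,J_{T_1} = \beta,\,T_2 > t\}$, on which $\xi_t$ splits as the sum of independent pieces $\xi^{(\alpha)}_{T_1-}$, $U_{\alpha,\beta}$, and $\xi^{(\beta)}_{t - T_1}$. Integrating against the joint law of $(T_1, J_{T_1}, T_2)$ produces a strictly positive, finite multiple of $\E[e^{U_{\alpha,\beta}}]$, which must therefore be finite.

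The main obstacle I anticipate is the ``if'' direction: one must combine the conditional-independence structure with the alternating-renewal control of $N_t$ to pass from the conditional bound to a finite unconditional one, being careful to keep the bound monotone in the sojourn lengths and in the number of transitions --- hence the $\vee 0$ and $\vee 1$ safeguards used above. The ``only if'' direction is essentially bookkeeping once one samples convenient trajectories of $J$.
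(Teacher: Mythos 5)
Your proof is correct and follows essentially the same route as the paper: condition on the trajectory of $J$, use the independence structure of decomposition (\ref{eqn:lampKiuDecomp}) to bound the \levy contribution by the total elapsed time $t$ and the transition-jump contribution by a maximal exponential moment raised to the power $N_t$, and then invoke Lemma \ref{lem:bddNt} to dominate $N_t$ by a Poisson variable. Your necessity argument, which isolates the explicit events $\{T_1>t\}$ and $\{T_1\le t,\, J_{T_1}=\beta,\, T_2>t\}$, is in fact spelled out more carefully than the paper's one-line appeal to a non-integrable component appearing in the product with positive probability.
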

    
    \begin{proof}
        First, we suppose that the exponential moments of $\xi_1^{(\alpha)}$ and $U_{\alpha,\beta}$ exist for all $\alpha,\beta\in E$ and show that $Y_t$ is integrable for all $t\geq0$. Let $\hat{\xi} \coloneqq \xi^{(\hat{\alpha})}$, where $\hat{\alpha}\coloneqq\text{argmax}_{\alpha\in E}\left\{\E\left[\exp(\xi^{(\alpha)}_1)\right]\right\}$.
        Moreover, let $\left(\hat{\xi}^{(k)}\right)_{k\in\N}$
        be a sequence of i.i.d. copies of $\hat{\xi}$,
        which are also independent of $\xi^{(\alpha,k)}$, for all $k\in\N_0$ and $\alpha\in E$. Then, by the independence structure of the decomposition (\ref{eqn:lampKiuDecomp}), for all $k\in\N_0$ and $t\geq0$,
        \begin{align*}
          \E\left[ \exp\left(\xi_t^{(k)}\right) \right] = \E\left[ \exp\left(\xi_1^{(k)}\right) \right]^t &\leq \E\left[ \exp\left(\hat{\xi}^{(k)}_1\right) \right]^t = \E\left[ \exp\left(\hat{\xi}_t^{(k)}\right) \right].
        \end{align*}
        Hence, for $\sigma(t)\in[0,\infty)$ as defined in (\ref{eqn:lampKiuDecomp}),
        \begin{align}
            \label{eqn:xiHatInequality}
            \E\left[ \exp\left(\xi_{\sigma(t)}^{(k)}\right) \right] 
            &= \E\left[  \E\left[\exp\left( \xi^{(k)}_1 \right) \right]^{\sigma(t)} \right]
            \leq \E\left[  \E\left[\exp\left( \hat{\xi}^{(k)}_1 \right) \right]^{\sigma(t)} \right]
            = \E\left[ \exp\left(\hat{\xi}_{\sigma(t)}^{(k)}\right) \right].
        \end{align}
        
        Let $\mathcal{G}$ be the $\sigma$-algebra generated by $(T_n)_{n\in\N}$. Then, by the tower property and independence of the \levy processes from the other components of the decomposition (\ref{eqn:lampKiuDecomp}),
        \begin{equation}
            \label{eqn:eqpMomentDecomp}
            \E[Y_t] 
            =\E\left[ \E\left[ \exp\left( \xi_{\sigma(t)}^{(N_t)} + \sum_{k=0}^{N_t-1}\xi_{\zeta_{k-}}^{(k)}\right)\:\middle|\: \mathcal{G} \right] 
            \E\left[ \exp\left(\sum_{k=0}^{N_t-1} U_{k} \right) \:\middle|\: \mathcal{G} \right] \right].
        \end{equation}
        Consider the first of the two conditional expectations of (\ref{eqn:eqpMomentDecomp}). By independence and (\ref{eqn:xiHatInequality}), we have
        \begin{align*}
            \E\left[ \exp\left( \xi_{\sigma(t)}^{(N_t)} + \sum_{k=0}^{N_t-1}\xi_{\zeta_{k-}}^{(k)}\right)\:\middle|\: \mathcal{G}  \right]
            &=\E\left[ \exp\left( \xi_{\sigma(t)}^{(N_t)}\right) \:\middle|\:\mathcal{G} \right]\prod_{k=0}^{N_t-1}\E\left[ \exp\left( \xi_{\zeta_{k-}}^{(k)} \right) \:\middle|\: \mathcal{G} \right]\\
            &\leq\E\left[ \exp\left( \hat{\xi}_{\sigma(t)}^{(N_t)}\right) \:\middle|\: \mathcal{G} \right]\prod_{k=0}^{N_t-1}\E\left[ \exp\left( \hat{\xi}_{\zeta_{k-}}^{(k)} \right) \:\middle|\: \mathcal{G} \right]\\
            &=\E\left[ \exp\left( \hat{\xi}_{\sigma(t)}^{(N_t)} + \sum_{k=0}^{N_t-1}\hat{\xi}_{\zeta_{k-}}^{(k)}\right)\:\middle|\: \mathcal{G}  \right].
        \end{align*}
        Moreover, by definition, $t = \sigma(t) + \sum_{k=0}^{N_t-1}\zeta_{k}$ and since the increments of a \levy process are i.i.d.,
        \begin{align*}
            \E\left[ \exp\left( \xi_{\sigma(t)}^{(N_t)} + \sum_{k=0}^{N_t-1}\xi_{\zeta_{k-}}^{(k)}\right)\:\middle|\: \mathcal{G} \right]
            \leq\E\left[\exp\left( \hat{\xi}_t \right) \right].
        \end{align*}
            \if{}
            and similarly
            \begin{align*}
                \E\left[ \exp\left( \xi_{\sigma(t)}^{(x)} + \sum_{k=0}^{N_t^{(x)}-1}\xi_{\zeta^{s(x,k)-}}^{s(x,k)}\right)\:\middle|\: \sigma\left((T_n)_{n\in\N}\right)  \right] 
                \geq \E\left[\exp\left( \tilde{\xi}^{(x)}_t \right) \right].
            \end{align*}
            \fi 
        From its definition, $\hat{\xi}$ has finite exponential moments only if $\xi^{(\alpha)}$ does for all $\alpha\in E$, and so, the same holds for $\xi$.
        
        Now, consider the second conditional expectation of (\ref{eqn:eqpMomentDecomp}) and notice that $\left\{N_t:t\geq0\right\}$ is $\mathcal{G}$ measurable.
        Hence, it follows that
        \begin{equation*}
            \E\left[ \exp\left( \sum_{k=0}^{N_t-1} U_{k} \right) \:\middle|\: \mathcal{G} \right]
            = \prod_{k=0}^{N_t-1} \E\left[ \exp\left(U_{k}\right) \:\middle|\: \mathcal{G} \right]
            \leq \prod_{k=0}^{N_t}\hat{V},
        \end{equation*}
        where,
        $
            \hat{V} 
            := \max\left\{ \E\left[ \exp(U_{\alpha,\beta}) \right]\::\: \alpha,\beta\in E \right\} 
            \geq \max_{k\in\N}\E\left[ \exp\left( U_k \right) \:\middle|\: \mathcal{G} \right].
        $
        Notice that $\hat{V}<\infty$, if and only if, $\E\left[\exp\left(U_{\alpha,\beta}\right)\right]<\infty$, for all $\alpha,\beta\in E$. Recall from Lemma \ref{lem:bddNt}, that $N_t\leq \eta_t$, for all $t>0$, where $\{\eta_t:t\geq0\}$ is a Poisson process of rate $\hat{\lambda}:=\max_{\alpha\in E}q_\alpha$. Therefore, by standard results for Poisson processes,
        \begin{align*}
            \E\left[ \exp\left( \sum_{k=0}^{N_t-1} U_{k} \right) \right] 
            &\leq \E\left[ \prod_{k=0}^{\eta_t-1} \hat{V} \right]
            =\E\left[ \hat{V}^{\eta_t} \right]
            \leq\exp\left(\frac{t(\hat{V}-1)}{\hat{\lambda}}\right).
        \end{align*}
                    \if{}
                    \begin{align*}
                        \E\left[ \exp\left( \sum_{k=0}^{N_t^{(x)}-1} U^{s(x,k)} \right) \right] \geq 0.
                    \end{align*}
                    \fi
        Hence, we have
        $
        \E[Y_t] \leq  \hat{B}^t, 
        $
        where
        $\hat{B} := \E\left[\exp\left(\hat{\xi_1}\right)\right]\exp\left((\hat{V}-1)\hat{\lambda}^{-1}  \right)$ is a constant.
        Clearly, if $\E[\exp(\xi^{(\alpha)})]<\infty$ and $\E[\exp(U_{\alpha,\beta})]<\infty$ all $\alpha,\beta\in E$, then $\hat{B}$ is finite. Thus, we have a sufficient condition for $\E\left[Y_t\right]<\infty$, for all $t\geq0$.
        
        To prove necessity, suppose one of the components, $\xi^{(\alpha)}$ or $U_{\alpha,\beta}$ for some $\alpha,\beta\in E$, of the decomposition (\ref{eqn:lampKiuDecomp}) fails to have exponential moments. Then, with positive probability, that component appears in the product (\ref{eqn:eqpMomentDecomp}) and hence $\E[Y_t]=\infty$ for all $t>0$.
        
                \if{}
                    To prove necessity, first suppose there exists $\sigma\in E$ such that $\xi^\sigma$ doesn't have exponential moments. If $\Prob(J_0=\sigma)>0$ then, by positivity, for $t\geq0$,
                    \begin{align*}
                        \E[|Y_t|] 
                        &\geq \E[|Y_t|\:|\: J_0=\sigma, N_t=0]\Prob(J_0=\sigma )\Prob(N_t=0\:|\: J_0=\sigma)
                        = \E[\exp(\xi^\sigma_t)]\Prob( J_0=\sigma)e^{-q^\sigma t}
                    \end{align*}
                    Then, since $\Prob( J_0=\sigma)e^{-q^\sigma t}>0$ and by the assumption $\E[\exp(\xi^\sigma_t)]=\infty$, we have $\E[|Y_t|]=\infty$.
                    
                    Now, suppose $\Prob(J_0=\sigma)=0$ so that $J_0\neq\sigma$ a.s.. In this case, $\Prob(J_t=\sigma)>0$ for any $t>0$, since the Markov chain $J$ is irreducible and time homogeneous. Consider the evolution from this point. In particular, using the Markov additive property,
                    \begin{align*}
                        \E[|Y_t|] 
                        &\geq \E\left[|Y_t|;J_s=\sigma \: \forall s\in\left[ \frac{t}{2},t \right]\right] \\
                        &= \E\left[|Y_{t/2}|\:\middle|\: J_s=\sigma \: \forall s\in\left[ \frac{t}{2},t \right] \right]\E\left[\exp\left(\xi^\sigma_{t/2}\right)\right]\Prob\left(J_s=\sigma \:\forall s\in\left[ \frac{t}{2},t \right]\right).
                    \end{align*}
                    Then, since $Y_t$ takes values in $\R^*\coloneqq\R\setminus\{0\}$, the first expectation must be strictly positive. Since $K$ is a time homogeneous irreducible Markov chain, $\Prob\left(J_s=\sigma\:\forall s\in\left[\frac{t}{2},t\right]\right)>0$. Hence, $\E[|Y_t|]=\infty$ by the assumption $\E[\exp(\xi^\sigma_t)]=\infty$.
                
                    Similarly, if there exists $\alpha,\beta\in E$ such that $U^{(\alpha,\beta)}$  doesn't have a finite exponential moment, then decomposing $Y_t$ gives
                    \begin{align*}
                        \E[|Y_t|] &\geq \E[|Y_t|; J_t=\beta, J_{T_{N_t}-}=\alpha]\\
                        &= \Prob( J_t=\beta, J_{T_{N_t}-}=\alpha ) \E[\exp(U^{(\alpha,\beta})] \\
                        &\qquad\E\left[ \exp\left( \xi_{\sigma(t)}^{N_t} +
                        \xi_{\zeta^{N_t-1}-}^{N_t-1}
                        +\sum_{k=1}^{N_t-2}\left( \xi_{\zeta^{k-}}^{k} + U^{k} \right) \right) \:\middle|\: J_t=\beta, J_{T_{N_t}-}=\alpha \right].
                    \end{align*}    
                    This is infinite since the first expectations is infinite by assumption and the other terms are strictly positive.
                    
                    
                    Thus, the process $\{Y_t,t\geq0\}$ is integrable if and only if $\xi^{\sigma}$ and $U^{\alpha,\beta}$ have exponential moments for all $\sigma,\alpha,\beta\in E$.    
                \fi
    \end{proof}

    We now consider the conditions required for uniform integrability of $Y$. The following adaptation of \cite[pp 174, Section 2, Lemma 1]{WILLEKENS1987173} will be needed in the proof of the uniform integrability statement of Theorem \ref{thm:intLampKiu}.
    
    \begin{lem}
        \label{lem:willekensMAP}
        For any $T>0$ and $0<u_0<u$, the following bound holds:
        \begin{align}
            \label{eqn:willikensMAP}
            \Prob\left( \sup_{t\in[0,T]} \xi_t \geq u \right) \leq \frac{\Prob\left( \xi_T>u-u_0 \right)}{\min_{\alpha\in E}\Prob_\alpha\left( \inf_{s\in[0,T]} \xi_s \geq -u_0 \right)}.
        \end{align}
    \end{lem}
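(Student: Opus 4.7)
The plan is to mimic the classical Willekens argument for Lévy processes, using the strong Markov property of the MAP $(J,\xi)$ at the first passage time above level $u$. The bulk of the work is to check that the strong Markov property indeed splits the probability into a factor corresponding to the upcrossing and a factor involving only the post-passage fluctuations.

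First, I would introduce the stopping time $\tau_u := \inf\{t\geq 0 : \xi_t \geq u\}$. Since MAPs have càdlàg paths, $\tau_u$ is an $(\mathcal{F}_t)$-stopping time and $\xi_{\tau_u}\geq u$ on $\{\tau_u < \infty\}$. Moreover, the events coincide:
\begin{equation*}
  \left\{\sup_{t\in[0,T]} \xi_t \geq u\right\} = \{\tau_u \leq T\}.
\end{equation*}

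Next, on $\{\tau_u \leq T\}$, if additionally $\inf_{s\in[\tau_u,T]}(\xi_s - \xi_{\tau_u}) > -u_0$, then $\xi_T > u - u_0$. Hence
\begin{equation*}
  \Prob(\xi_T > u-u_0) \geq \Prob\!\left(\tau_u \leq T,\; \inf_{s\in[\tau_u,T]} (\xi_s - \xi_{\tau_u}) > -u_0\right).
\end{equation*}
Applying the strong Markov property of the MAP at time $\tau_u$ — that is, conditionally on $\mathcal{F}_{\tau_u}$, the shifted process $(J_{\tau_u+s}, \xi_{\tau_u+s}-\xi_{\tau_u})_{s\geq 0}$ has the law of $(J,\xi)$ started from $(J_{\tau_u},0)$ — the right-hand side equals
\begin{equation*}
  \E\!\left[\mathbf{1}_{\{\tau_u\leq T\}}\, \Prob_{J_{\tau_u}}\!\left(\inf_{s\in[0,T-\tau_u]} \xi_s > -u_0\right)\right].
\end{equation*}
Since $T-\tau_u \leq T$, lengthening the interval from $[0,T-\tau_u]$ to $[0,T]$ only makes the infimum smaller, so the conditional probability is bounded below by $\Prob_{J_{\tau_u}}(\inf_{s\in[0,T]} \xi_s \geq -u_0)$, which in turn is at least $m := \min_{\alpha\in E}\Prob_\alpha(\inf_{s\in[0,T]} \xi_s \geq -u_0)$.

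Combining these bounds gives
\begin{equation*}
  \Prob(\xi_T > u-u_0) \geq m\cdot \Prob(\tau_u \leq T) = m\cdot \Prob\!\left(\sup_{t\in[0,T]} \xi_t \geq u\right),
\end{equation*}
which rearranges to the desired inequality (provided $m>0$; if $m=0$ the bound is trivial). The main technical point to be careful about is the invocation of the strong Markov property for MAPs, so that the law of the shifted process depends only on $J_{\tau_u}$ and not on the overshoot $\xi_{\tau_u}-u$; once this is in place the rest is a straightforward path-wise comparison and a uniform lower bound over the finite state space $E$.
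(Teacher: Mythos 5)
Your argument is correct and follows essentially the same route as the paper's: a first-passage stopping time, the strong Markov (Markov-additive) property at that time to factor out a probability depending only on $J_{\tau_u}$, and a uniform minimum over the finite state space $E$ after enlarging the time interval to $[0,T]$. The only difference is cosmetic --- you lower-bound the intersection event $\{\tau_u\leq T,\ \inf(\xi_s-\xi_{\tau_u})>-u_0\}$ directly where the paper subtracts the complementary event --- and both write-ups share the same harmless looseness about strict versus non-strict inequalities at the boundary.
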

    \begin{proof}
        Consider the stopping time $S_u\coloneqq\inf\{t\geq0\:|\:\xi_t>u\}$. Since $\xi$ is \cadlag, $\xi_{S_u}\geq u$, hence
        \begin{align*}
            \Prob(S_u<T;\xi_T<u-u_0)
            &\leq \Prob(S_u<T;\xi_T-\xi_{S_u} < -u_0 )\\
            &\leq \Prob\left(S_u<T; \inf_{s\in[S_u,S_u+T]}(\xi_s - \xi_{S_u}) < -u_0 \right)\\
            &=\sum_{\alpha\in E}\Prob(S_u<T; J_{S_u}=\alpha )\Prob_\alpha\left( \inf_{s\in[0,T]}\xi_s < -u_0 \right).
        \end{align*}
        Then, we can obtain the inequality
        \begin{align*}
            \Prob(\xi_T\geq u-u_0) 
            &\geq \Prob(S_u<T) - \Prob(S_u<T;\xi_T<u-u_0)\\
            &\geq \Prob(S_u<T) - \sum_{\alpha\in E}\Prob(S_u<T;J_{S_u}=\alpha)\Prob_\alpha\left(\inf_{s\in[0,T]} <-u_0\right)\\
            &\geq \Prob(S_u<T)\min_{\alpha\in E}\Prob_\alpha\left(\inf_{s\in[0,T]}\geq-u_0\right),
        \end{align*}
        which can be rearranged to obtain (\ref{eqn:willikensMAP}). 
        
    \end{proof} 

    It is known that there are no strictly locally integrable \levy processes \cite[Exercise 29, pp 49]{protter2005stochastic}. By a straightforward adaptation of the proof, the same is true for exponentials of \levy processes. A corresponding result for $Y$ is derived in the following lemma. 
        
    \begin{lem}
        \label{lem:noStrictLoclMG}
        If $\left\{Y_t:t\geq0\right\}$ is locally integrable, then it is also integrable. Moreover, if $Y$ is a local martingale, then it is also a true martingale.
    \end{lem}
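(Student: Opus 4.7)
My plan is to verify condition~(\ref{list:integrability:decomp}) of Theorem~\ref{thm:intLampKiu} by transferring the local integrability of $Y$ to the standalone components of the decomposition~(\ref{eqn:lampKiuDecomp}), and then invoke Lemma~\ref{lem:Lamperti-Kiu-integrability}. For the martingale claim, I will use Lemma~\ref{lem:willekensMAP} to obtain an integrable dominating function and close by dominated convergence.

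Fix a localizing sequence $\tau_n\uparrow\infty$ with $\E\bigl[\sup_{s\le\tau_n}Y_s\bigr]<\infty$. Working under $\Prob_\alpha$ for a fixed $\alpha\in E$, let $T_1$ denote the first jump time of $J$; by~(\ref{eqn:lampKiuDecomp}), $\xi_t=\xi^{(\alpha)}_t$ for $t<T_1$, and by construction $\xi^{(\alpha)}$ is independent of $T_1$. Setting $\rho_n\coloneqq\tau_n\wedge T_1>0$ almost surely,
\[
    \E_\alpha\bigl[\sup\nolimits_{t\le\rho_n}e^{\xi^{(\alpha)}_t}\bigr]
    =\E_\alpha\bigl[\sup\nolimits_{t\le\rho_n}Y_t\bigr]
    \le \E_\alpha\bigl[\sup\nolimits_{t\le\tau_n}Y_t\bigr]<\infty.
\]
Conditioning on $T_1$ and using its independence from $\xi^{(\alpha)}$ delivers a stopping time in the natural filtration of $\xi^{(\alpha)}$ at which the running exponential supremum of $\xi^{(\alpha)}$ is integrable; the \levy-case version of the lemma (cited above its statement) then forces $\E[\exp(\xi^{(\alpha)}_1)]<\infty$.

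For $\alpha,\beta\in E$ with $q_{\alpha,\beta}>0$, consider $B_n\coloneqq\{J_0=\alpha,\,J_{T_1}=\beta,\,\tau_n>T_1\}$, which has positive probability for $n$ large since $\tau_n\to\infty$. On $B_n$, $Y_{T_1}=e^{\xi^{(\alpha)}_{T_1-}}e^{U_{\alpha,\beta}}\le\sup_{s\le\tau_n}Y_s$, and since $U_{\alpha,\beta}$ is independent of $\xi^{(\alpha)}$, $T_1$, and $J$, factoring the expectation forces $\E[\exp(U_{\alpha,\beta})]<\infty$, the other factor being strictly positive and finite. When $q_{\alpha,\beta}=0$ the jump $U_{\alpha,\beta}$ never appears in~(\ref{eqn:lampKiuDecomp}) and so places no constraint on $Y$. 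Lemma~\ref{lem:Lamperti-Kiu-integrability} then yields $\E[Y_t]<\infty$ for all $t\ge 0$.

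For the martingale claim, any local martingale is locally integrable, so the first part already gives integrability of $Y$; Lemma~\ref{lem:willekensMAP} upgrades this to $\E\bigl[\sup_{s\le t}Y_s\bigr]<\infty$ for every $t\ge 0$. Given a localizing sequence $(\sigma_n)$ for the local martingale property, $Y_{\sigma_n\wedge t}\to Y_t$ almost surely with integrable dominant $\sup_{s\le t}Y_s$, and dominated convergence applied to the identity $\E[Y_{\sigma_n\wedge t}\mid\mathcal{F}_s]=Y_{\sigma_n\wedge s}$ yields $\E[Y_t\mid\mathcal{F}_s]=Y_s$ for $s\le t$. The main obstacle is the first step above: turning local integrability of the MAP $Y$ into local integrability of the standalone \levy process $\exp(\xi^{(\alpha)})$. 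The extracted stopping time $\rho_n$ lives in the MAP filtration augmented by $T_1$, and producing from it a legitimate localizing stopping time in the natural filtration of $\xi^{(\alpha)}$, so that the cited \levy-case result applies, requires careful use of the independence of $T_1$ from $\xi^{(\alpha)}$ built into~(\ref{eqn:lampKiuDecomp}).
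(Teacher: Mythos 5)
Your second claim (local martingale $\Rightarrow$ martingale) follows the paper's route exactly: Lemma \ref{lem:willekensMAP} plus integration by parts gives $\E\bigl[\sup_{s\le T}Y_s\bigr]<\infty$, and dominated convergence closes the argument. That part is fine.

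The first claim has a genuine gap, which you partly flag yourself but do not resolve, and which is structural rather than a matter of ``careful use of independence.'' Your plan is to extract from the localizing sequence a localizing sequence for the standalone \levy exponential $\exp(\xi^{(\alpha)})$ and then invoke the cited \levy-case result that there are no strictly locally integrable \levy processes. But the stopping times you produce, $\rho_n=\tau_n\wedge T_1$, are all bounded above by $T_1<\infty$; they increase to $T_1$, not to $+\infty$. Conditioning on $T_1=s$ only yields stopping times of $\xi^{(\alpha)}$ bounded by $s$. A localizing sequence that does not tend to infinity is exactly the situation the \levy-case result does not cover, so the reduction cannot be completed along these lines; moreover, $\E\bigl[\sup_{t\le\rho_n}Y_t\bigr]<\infty$ for each $n$ does not pass to the limit $\E\bigl[\sup_{t<T_1}Y_t\bigr]<\infty$. (There is also a small slip in equating $\sup_{t\le\rho_n}e^{\xi^{(\alpha)}_t}$ with $\sup_{t\le\rho_n}Y_t$: at $t=T_1$ the two differ by the jump $U_{\alpha,\beta}$.)

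The paper avoids the \levy-case lemma entirely. From the localizing sequence it builds a single stopping time $\tau$ with $\tau>T_1$ almost surely, so that $Y_{\tau\wedge T_1}=Y_{T_1}$, and then uses the independence structure at the first switching time to factor
\begin{align*}
  \E_\alpha\left[Y_{T_1}\right]
  =\E\left[\exp\left(\xi^{(\alpha)}_{T_1}\right)\right]\sum_{\beta\in E\setminus\{\alpha\}}\E\left[\exp\left(U_{\alpha,\beta}\right)\right]\Prob\left(J_{T_1}=\beta\right).
\end{align*}
Since $T_1$ is an independent exponential time, $\E\bigl[\exp\bigl(\xi^{(\alpha)}_{T_1}\bigr)\bigr]=\infty$ whenever $\E\bigl[\exp\bigl(\xi^{(\alpha)}_{1}\bigr)\bigr]=\infty$, so the failure of any exponential moment contradicts local integrability directly; Lemma \ref{lem:Lamperti-Kiu-integrability} then gives integrability, and irreducibility of $J$ transfers the conclusion to an arbitrary initial state. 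Your treatment of the $U_{\alpha,\beta}$ moments via the event $B_n$ is close to this in spirit and could be salvaged, but the \levy-component step needs to be replaced by an argument of this direct type.
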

    \begin{proof}
      Suppose $Y$ is locally integrable and let $\{\tau_n\}_{n\in\N}$ be a localising sequence of stopping times.
      Define a new stopping time $\tau\coloneqq \min_{n\in\N}\left\{\tau_n:\tau_n>T_1\right\}$.

      Suppose $J_0=\alpha\in E$ and let $Y^{(\tau)}_{t}:=Y_{\tau\wedge t}$, for $t\geq0$, be the process $Y$ stopped at $\tau$. Since $T_1$ is also a stopping time, by local integrability $\E[Y_{\tau\wedge T_1}]<\infty$. However, since $T_1<\tau$ and since $J$ is a Markov chain,
      \begin{align*}
        \E_\alpha[Y_{\tau\wedge T_1}]
        =\E\left[\exp\left(\xi^{(\alpha)}_{T_1}\right)\right]\sum_{\beta\in E\setminus\{\alpha\}}\E[\exp(U_{\alpha,\beta}))]\frac{q_{\alpha,\beta}}{q_{\alpha,\alpha}}.
      \end{align*}
      Thus, if $\E\left[\exp\left(\xi^{(\alpha)}_{T_1}\right)\right]=\infty$ or $\E\left[\exp\left(U_{\alpha,\beta}\right)\right]=\infty$ for any $\beta\in E$, then $\E_\alpha[Y_{\tau\wedge T_1}]=\infty$ contradicting local integrability.

      Now consider $J_0=\beta\neq\alpha$. Then, let $S=\min_{n\in\N}\left\{J_{T_n}=\alpha\right\}$ and notice that for all $T>0$,
      \begin{align*}
        \E_\beta\left[Y_{T};T>S\right]
        =\int_0^T\E_\beta\left[Y_s\middle|S=s\right]\E_\alpha[Y_{T-s}]\Prob_\beta(S\in ds).
      \end{align*}   
      Since $J$ is ergodic, it follows that $\Prob_\beta(S<T)>0$, hence $Y$ is locally  integrable with respect to $\Prob_\beta$ only if it is locally integrable with respect to $\Prob_\alpha$ also.
      
      Hence, for any inital distribution of $J$, the process $Y$ is locally integrable, only if $\E\left[\exp\left(\xi^{(\alpha)}\right)\right]<\infty$ and $\E\left[\exp\left(U_{\alpha,\beta}\right)\right]<\infty$, for all $\alpha,\beta\in E$.
      However, by Lemma \ref{lem:Lamperti-Kiu-integrability}, these are precisely the conditions for $Y$ to be integrable. Hence, $Y$ is not stictly locally integrable.

      Now consider the second claim and suppose that $Y$ is a local martingale. For ease of notation, for each $T>0$ let $\bar{\xi}_T\coloneqq\sup_{t\in[0,T]}\xi_t$ and $\bar{Y}_T\coloneqq\sup_{t\in[0,T]}Y_t$.

      Then, for $K>1$, by integration by parts,
      \begin{align*}
        \E\left[ \exp\left( \bar{\xi}_T \right); \bar{\xi}_T > \log(K) \right]
        &= \lim_{x\rightarrow\infty}-\exp(x)\Prob\left(\bar{\xi}_T\geq x\right) + K\Prob\left(\bar{\xi}_T\geq\log(K)\right) + \int_{\log(K)}^\infty\exp(x)\Prob(\bar{\xi}_T\geq x)dx\\
        &= \lim_{x\rightarrow\infty}-x\Prob\left(\bar{\xi}_T\geq\log(x)\right)+ K\Prob\left(\bar{\xi}_T\geq \log(K)\right) + \int_K^\infty \Prob\left(\bar{\xi}_T\geq \log(x)\right) dx.
      \end{align*}
      Since $K>1$, we can choose $u_0\in(0,\log(K))$. Then, by applying Lemma \ref{lem:willekensMAP},
      \begin{align*}
           H(u_0)\E\left[ \bar{Y}_t; \bar{Y}_t>K \right]
            \leq \lim_{y\rightarrow\infty}-y\Prob\left(\bar{\xi}_T\geq\log(y)-u_0\right)+ K\Prob(\xi_T\geq \log(K) - u_0) + \int_K^\infty \Prob(\xi_T\geq \log(y) - u_0) dy,
        \end{align*}
        where $H(u_0)\coloneqq\min_{\alpha\in E}\Prob_\alpha\left( \inf_{s\in[0,T]}\xi_s\geq-u_0 \right)$ and taking $K$ and $u_0$ sufficently large ensures $H(u_0)>0$. Moreover, since $Y_T$ is integrable, $\lim_{y\rightarrow\infty}-y\Prob\left(\bar{\xi}_T\geq\log(y)-u_0\right)=0$. Then, rewriting in terms of $Y_T$ gives
        \begin{align}
          \label{eqn:supYT-K0}
          H(u_0)\E\left[ \bar{Y}_T; \bar{Y}_T>K \right]
          \leq
          K\Prob(Y_T\geq Ke^{-u_0}) + e^{u_0}\int_{Ke^{-u_0}}^\infty \Prob(Y_T\geq w)dw
          = e^{u_0}\E\left[ Y_T;Y_T\geq Ke^{-u_0} \right].
        \end{align}
        Hence, $
          \E\left[\bar{Y}_T\right] \leq K + \frac{e^{u_0}}{H(u_0)}\E\left[Y_T\right]<\infty,
          $ where the final inequality is due to the integrability of $Y$, that follows from the fact $Y$ can not be strictly locally integrable.

        Then, since $\tau_n\rightarrow\infty$ as $n\rightarrow\infty$ a.s., it follows that $Y^{(\tau_n)}_T\rightarrow Y_T$ as $n\rightarrow\infty$ a.s.. For each, $n\in\N$, the inequality $Y^{(\tau_n)}_T\leq\bar{Y}_T$ holds and by the above arugment $\bar{Y}_T$ is integrable. Hence, by the dominated convergence theorem, for each $t\in[0,T]$, a.s.
        \begin{align*}
          \E\left[Y_T\middle|\mathcal{F}_t\right]
          = \lim_{n\rightarrow\infty}\E\left[Y_T^{(\tau_n)}\middle|\mathcal{F}_t\right]
          = \lim_{n\rightarrow\infty} Y_t^{(\tau_n)}
          = Y_t,
        \end{align*}
        and so $Y$ is a martingale.
    \end{proof}

    We are now in a position to prove Theorem \ref{thm:intLampKiu}.
    
    \textit{Proof of Theorem \ref{thm:intLampKiu}}\newline
        Following Remark \ref{rem:integrability:p=1} after the statement of the theorem, we will show the result for the case $p=1$.    
        The equivalence of (\ref{list:integrability:Lp}) and (\ref{list:integrability:decomp}) and then of (\ref{list:integrability:Lp}) and (\ref{list:integrabilityLpatT}) is given by Lemma \ref{lem:Lamperti-Kiu-integrability}.
        The equivalence of (\ref{list:integrability:Lp}) and (\ref{list:integrability:local}) is given by Lemma \ref{lem:noStrictLoclMG}.
        The equivalence of (\ref{list:integrability:decomp}) and (\ref{list:integrability:measures}) follows from the result for \levy processes given in \cite[pp 159, Chapter 5, Section 25, Theorem 25.3]{sato1999levy}.
        The equivalence of (\ref{list:integrability:decomp}) and (\ref{list:integrability:Fp}) follows from equation (\ref{eqn:exponentExplicit}).
        
        Now consider the final statement regarding uniformly integrability.        
        It is known that there is a real left eigenvector $h\in\R^{|E|}$ of $F(1)$, corresponding to the principal eigenvalue $\kappa(1)$, which has non-negative enteries and is such that $\sum_{\alpha}h_\alpha=1$, for example see \cite[pp 5, Section 1, Proposition 1.3]{stephenson2017expfunc}. Thus, $h$ may be used as the initial distribution over $E$ of $J$. Moreover, $h$ is also a left eigenvector of $e^{tF(1)}$, corresponding to the eigenvalue $e^{t\kappa(1)}$. Let $\Prob_h$ and $\E_h$ denote the probability measure and corresponding expectation, respectivley, when $J$ has initial distribution given by $h$.

        For the case $\theta\leq1$, we first show that $Y$ is not uniformly integrable with respect to $\Prob_h$ and then use this to prove that $Y$ is not uniformly integrable with respect to any initial distribution of $J$.
        In this case,  $\kappa(1)\geq0$ hence, $h$ is a left eigenvector of $e^{tF(1)}$, corresponding to the eigenvalue $e^{t\kappa(1)}\geq1$. Thus,
        \begin{align*}
          \E_h\left[Y_t\right]
          = \sum_{\alpha\in E}\sum_{\beta\in E}h_\beta\E_\beta\left[ Y_t; J_t=\alpha \right]
          =\sum_{\alpha\in E}\left(h e^{tF(1)} \right)_\alpha
          =e^{t\kappa(1)}\sum_{\alpha\in E}h_\alpha
          \geq1,
        \end{align*}
        for all $t\geq 0$. However, under \cramers condition it is known that
        $\lim_{t\rightarrow\infty} t^{-1}\xi_t = \kappa'(0)<0$ almost surely and hence also in probability (for example see \cite[pp 313, Chapter XI, Section 2, Corollary 2.8]{asmussen2003applied} and \cite[pp 9, Section 2.3]{Kuznetsov2014}).
        By choosing $\epsilon\in(0,-\kappa'(0))$, there exists $\tau_1>0$ such that $\exp(t(\kappa'(0)+\epsilon))<\frac{1}{2}$ for all $t>\tau_1$. Moreover, by convergence in probability, for all $\delta>0$ there exists $\tau_2>0$ such that, for all $t>\tau_2$,
         \begin{align*}
             \delta 
             \geq \Prob_h\left(t^{-1}\xi_t- \kappa'(0)>\epsilon\right) 
             = \Prob_h(Y_t>\exp(t(\kappa'(0) + \epsilon))) ,
         \end{align*}
         and so for $t>\max(\tau_1,\tau_2)$,
         \begin{align*}
           \Prob_h\left( Y_t>\frac{1}{2} \right)
           \leq \Prob_h\left( Y_t > \exp\left(t \left(\kappa'(0) + \epsilon   \right) \right)  \right)
           \leq \delta,
         \end{align*}
         that is, $\Prob_h(Y_t>1/2)\rightarrow0$ as $t\rightarrow\infty$.
         
         Now suppose for contradiction that $Y$ is uniformly integrable with respect to $\Prob_h$. Then, for all $\gamma>0$, there exists $K>0$ such that $\E_h\left[ Y_t; Y_t>K \right]<\gamma$, for $t>0$. Hence, for all $t\geq0$,
         \begin{align*}
             1 
             \leq \E_h[Y_t]
             &= \E_h\left[ Y_t; Y_t<\frac{1}{2} \right] + \E_h\left[Y_t; \frac{1}{2}\leq Y_t\leq K \right] + \E_h\left[ Y_t; Y_t> K \right]\\
             &\leq \frac{1}{2} + K\Prob_h\left(Y_t\geq\frac{1}{2}\right) + \gamma.
         \end{align*}
         By taking the limit as $t\rightarrow\infty$ and using the above result, we obtain
         $1\leq \frac{1}{2} + \gamma$,
         which is clearly a contradiction for $\gamma<\frac{1}{2}$. Thus, in the case $\theta\leq1$, $Y$ isn't uniformly integrable with respect to $\Prob_h$.

         However, if $Y$ is not uniformly integrable with respect to $\Prob_h$,  then there must exist an $\alpha\in E$, such that $Y$ is not uniformly integrable with respect to $\Prob_\alpha$.        
         Now consider any $\beta\in E$. Then, for any $K>0$,
         \begin{align*}
            \E_\beta\left[ Y_t; Y_t > K \right]
            &\geq \E_\beta\left[ \exp\left( \xi_t \right); \xi_t>\log(K); J_1 = \alpha \right]\\
            &\geq \E_\beta\left[ \exp(\xi_1) \hat{\E}_\alpha\left[ \exp\left(\hat{\xi}_{t-1}\right); \hat{\xi}_{t-1}> \log(K) - \xi_1  \right] ; J_1=\alpha \right],
         \end{align*}
         where $\left(\hat{J},\hat{\xi}\right)$ is an independent and identically distributed copy of $(J,\xi)$, with corresponding expectation $\hat{\E}$.
         However, since $Y$ is not uniformly integrable with respect to $\Prob_\alpha$, there exists $\delta>0$ such that, \\$\limsup_{t\rightarrow\infty} \E_\alpha\left[ \exp(\xi_t); \xi_t > \log(K) \right] > \delta$, for all $K>0$.
         Thus,
         $
            \limsup_{t\rightarrow\infty} \E_\beta\left[ Y_t; Y_t>K \right]
            \geq \delta\E_\beta\left[ \exp\left(\xi_1\right); J_1=\alpha\right].
         $
         Then, since $\E_\beta\left[ \exp(\xi_1);J_1=\alpha \right]>0$, we don't have uniform integrability of $Y$ with respect to $\Prob_\beta$ for any $\beta\in E$ and so $Y$ is not uniformly integrable for any initial distribution of $J$, whenver $\theta\leq 1$.

            \if{}
                        We now consider the case $\hat{B}<1$. Suppose $\epsilon>0$ then, for all $K,t>0$,
                        \begin{align*}
                            \E\left[ |Y_t|;|Y_t|\geq K \right]
                            &\leq \E\left[ |Y_t| \right]
                            \leq \hat{B}^t,
                        \end{align*}
                        thus there exists $T>0$ such that $ \E\left[ |Y_t|;|Y_t|\geq K\right]<\epsilon $ for all $t>T$. 
            \fi
        
            \if{}
                        Since $F(1)$ is diagonalisable over $\C$ \textbf{REFERENCE} there is an  invertible matrix $P\in\C^{|E|\times|E|}$ such that 
                        \begin{align*}
                            F(1) = P\diag(\lambda_1, \cdots,\lambda_n)P^{-1}
                        \end{align*}
                        where $\lambda_1,\cdots,\lambda_n\in\C$ are the eigen value of $F(1)$ hence $\lambda_i<\kappa(1)<0$ for each $i\in\{1,\cdots,n\}$. From this we have
                        \begin{align*}
                            e^{tF(1)} = P\diag(e^{t\lambda_1},\cdots,e^{t\lambda_n})P^{-1}
                        \end{align*}
            \fi

      Now suppose that $\theta>1$. Then, $\kappa(1)<0$ and it follows that 
      $\E_h[Y_t;Y_t>K]\leq\sum_{\alpha\in E} h_\alpha e^{t\kappa(1)}\rightarrow0$,
      as $t\rightarrow\infty$. Moreover, since each entry of $h$ is strictly positive, $\E_\alpha[Y_t;Y_t>K]\rightarrow0$ as $t\rightarrow\infty$, for all $\alpha\in E$.
      Hence, for all $\epsilon>0$, there exists $T>0$ such that $\E_\alpha[Y_t;Y_t>K]<\epsilon$, for all $t>T$ and $K>0$.
        
      We now consider $t\in[0,T]$ and $K>1$.
      \if{}
      For ease of notation, let $\bar{Y}_T\coloneqq\sup_{t\in[0,T]}Y_t$ and $\bar{\xi}_T\coloneqq\sup_{t\in[0,T]}\xi_t$. Then, by integration by parts, we have
      \begin{align*}
        \sup_{t\in(0,T)} \E\left[ Y_t; Y_t>K \right]
        \leq \E\left[ \exp\left( \bar{\xi}_T \right); \bar{\xi}_T>K \right]
        = K\Prob(\bar{\xi}_T\geq \log(K)) + \int_K^\infty \Prob(\bar{\xi}_T\geq \log(y)) dy.
      \end{align*}
        Since $K>1$, we can choose $u_0\in(0,\log(K))$. Then, by applying Lemma \ref{lem:willekensMAP},
        \begin{align*}
            \sup_{t\in[0,T]} \E\left[ Y_t; Y_t>K \right]
            \leq \frac{K\Prob(\xi_T\geq \log(K) - u_0) + \int_K^\infty \Prob(\xi_T\geq \log(y) - u_0) dy}{H(u_0)},
        \end{align*}
        where $H(u_0)\coloneqq\min_{\alpha\in E}\Prob_\alpha\left( \inf_{s\in[0,T]}\xi_s\geq-u_0 \right)$. Setting $K_0 = Ke^{-u_0}$ and rewriting in terms of $Y_T$, we have
        %
        \begin{align*}
            \sup_{t\in[0,T]} \E\left[ Y_t; Y_t>K \right]
          \leq
          \frac{ e^{u_0}K_0\Prob(Y_T\geq K_0) + e^{u_0}\int_{K_0}^\infty \Prob(Y_T\geq w)dw}{H(u_0)}
          =\frac{ e^{u_0}\E\left[ Y_T;Y_T\geq K_0 \right]}{ H(u_0) }.
        \end{align*}
        \fi
        Taking the limit as $K\rightarrow\infty$ in (\ref{eqn:supYT-K0}) and letting $\bar{Y}_T\coloneqq\sup_{t\in[0,T]}Y_t$, gives
        \begin{align*}
          \lim_{K\rightarrow\infty}\sup_{t\in(0,T)} \E\left[ Y_t; Y_t>K \right]
          \leq \lim_{K\rightarrow\infty}\E\left[\bar{Y}_T;\bar{Y}_T>K\right]
          \leq \lim_{K\rightarrow\infty}\frac{ e^{u_0}\E\left[ Y_T;Y_T\geq Ke^{-u_0} \right]}{ H(u_0) }
          = 0,
        \end{align*}
        since $Y_T$ is integrable, for some $u_0$ sufficently large.
        Combined with the result for $t>T$, this implies $\{Y_t:t\geq0\}$ is uniformly integrable.


            \if{}
                We also have the related result.
                \begin{lem}
                    \label{lem:willekensMAPReturn}
                    For $j\in E$ let $\tau_j$ be the first time that $J$ returns to its initial state $j\coloneqq J_0$. Then, for any $0<u<u_0$ we have
                    \begin{align*}
                        \Prob_j\left( \sup_{t\in[0,\tau_j]} \xi_t \geq u \right) \leq \frac{\Prob_j\left( \xi_{\tau_j}>u-u_0 \right)}{\min_{i\in E}\Prob_i\left( \xi_{\tau_j} \geq -u_0 \right)}.
                    \end{align*}
                \end{lem}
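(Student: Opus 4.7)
The plan is to adapt the proof of Lemma \ref{lem:willekensMAP}, replacing the deterministic horizon $T$ by the random stopping time $\tau_j$ and using the strong Markov property of the MAP $(J,\xi)$ at the first passage time $S_u \coloneqq \inf\{t \geq 0 : \xi_t > u\}$. The decomposition by the state $J_{S_u}$ replaces the single application of the ``min over starting states'' that appeared before; the new ingredient is the identification, on $\{S_u < \tau_j\}$, of $\tau_j - S_u$ with the time for the shifted chain, started from $J_{S_u}$, to reach state $j$.

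I would carry out the following steps. Since $\xi$ is \cadlag, $\xi_{S_u} \geq u$ on $\{S_u < \tau_j\}$, and therefore
\begin{align*}
    \{\xi_{\tau_j} < u - u_0\} \cap \{S_u < \tau_j\} \subseteq \{\xi_{\tau_j} - \xi_{S_u} < -u_0\}.
\end{align*}
Partitioning by $J_{S_u} = i$ and invoking the strong Markov property at $S_u$ gives
\begin{align*}
    \Prob_j(S_u < \tau_j;\, \xi_{\tau_j} < u - u_0)
    \leq \sum_{i\in E} \Prob_j(S_u < \tau_j;\, J_{S_u} = i)\bigl(1 - \Prob_i(\xi_{\tau_j} \geq -u_0)\bigr),
\end{align*}
where $\tau_j$ under $\Prob_i$ denotes the appropriate first hitting (or return) time of $j$. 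Combining this with the trivial inequality
\begin{align*}
    \Prob_j(\xi_{\tau_j} > u - u_0)
    \geq \Prob_j(S_u < \tau_j) - \Prob_j(S_u < \tau_j;\, \xi_{\tau_j} < u - u_0),
\end{align*}
factoring out $\min_{i\in E}\Prob_i(\xi_{\tau_j} \geq -u_0)$, and rearranging yields the claim, once one identifies $\{\sup_{t\in[0,\tau_j]}\xi_t \geq u\}$ with $\{S_u \leq \tau_j\}$ modulo a null set (or works throughout with $S'_u \coloneqq \inf\{t : \xi_t \geq u\}$ to avoid the strict/non-strict distinction).

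The principal obstacle I expect is making the strong Markov step rigorous with a random horizon: one must verify that, on $\{S_u < \tau_j\}$, the residual $\tau_j - S_u$ is exactly the first hitting time of state $j$ by the chain of the MAP shifted by $S_u$, so that its conditional law given $\mathcal{F}_{S_u}$ and $\{J_{S_u}=i\}$ coincides with the law of $\tau_j$ under $\Prob_i$. A subsidiary point is the convention for $\tau_j$ under $\Prob_i$: one reads it as the first hitting time of $j$ when $i\neq j$, so that the minimum is well-defined and each factor is strictly positive by irreducibility of $J$, with the borderline case $i=j$ contributing the value $1$ harmlessly.
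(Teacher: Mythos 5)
Your proposal follows essentially the same route as the paper's own proof: introduce $S_u\coloneqq\inf\{t\geq0:\xi_t>u\}$, use right-continuity to get $\xi_{S_u}\geq u$, decompose over $\{J_{S_u}=i\}$ and apply the strong Markov property at $S_u$ so that $\xi_{\tau_j}-\xi_{S_u}$ has the law of $\xi_{\tau_j}$ under $\Prob_i$ (with $\tau_j$ read as the first hitting time of $j$), then combine with $\Prob_j(S_u<\tau_j)\leq\Prob_j(\xi_{\tau_j}\geq u-u_0)+\Prob_j(S_u<\tau_j;\xi_{\tau_j}<u-u_0)$ and rearrange. The two points you flag as potential obstacles, namely the rigorous strong Markov step at the random horizon and the identification of $\{\sup_{t\in[0,\tau_j]}\xi_t\geq u\}$ with $\{S_u<\tau_j\}$, are passed over silently in the paper's argument, so your treatment is, if anything, the more careful one.
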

                \begin{proof}
                    Let $S_u\coloneqq\inf\{t\geq0\::\:\xi_t>u\}$. Then, $\xi_{S_u}\geq u$ and so
                    \begin{align*}
                        \Prob_j(S_u<\tau_j;\xi_{\tau_j}<u-u_0)
                        &\leq \Prob_j(S_u<\tau_j;\xi_{\tau_j}-\xi_{S_u} < -u_0 )\\
                        &=\sum_{i\in E}\Prob_j\left(S_u < \tau_j; \xi_{\tau_j}-\xi_{S_u}<-u_0;J_{S_u}=i \right)\\
                        &=\sum_{i\in E}\Prob_j\left(S_u < \tau_j;J_{S_u}=i \right)\Prob_i\left( \xi_{\tau_j} < -u_0 \right)
                    \end{align*}
                    Then,
                    \begin{align*}
                        \Prob_j(S_u<\tau_j)
                        &\leq \Prob_j(\xi_{\tau_j}\geq u-u_0) + \Prob_j(\xi_{\tau_j}<u-u_0; S_u<\tau_j)\\
                        &\leq \Prob_j(\xi_{\tau_j}\geq u-u_0) + \sum_{i\in E}\Prob_j(S_u<\tau_j;J_{S_u}=i)\Prob_i\left( \xi_{\tau_j}<-u_0 \right)
                    \end{align*}
                    which can be rearranged to give
                    \begin{align*}
                        \sum_{i\in E}\Prob(S_u<\tau_j;J_{S_u}=i)\Prob_i\left( \xi_{\tau_j} \geq -u_0 \right) 
                        \leq \Prob_j(\xi_{\tau_j}\geq u - u_0).
                    \end{align*}
                    Thus,
                    \begin{align*}
                        \Prob(S_u<\tau_j) \leq \frac{\Prob_j(\xi_{\tau_j}\geq u-u_0)}{\min_{i\in E}\Prob_i\left( \xi_{\tau_j} \geq -u_0 \right)}
                    \end{align*}
                \end{proof}
                
                \begin{lem}
                    Let $\tau>0$ be the first return time of $\{J_t,t\geq0\}$ to $J_0$. Then, 
                    \begin{align*}
                        \E\left[ \sup_{t\in[0,\tau]} e^{\xi_t} \right] < \infty
                    \end{align*}
                \end{lem}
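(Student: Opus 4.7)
The plan is to adapt the Willekens-type bound of Lemma~\ref{lem:willekensMAP} to the random return time $\tau$, then integrate by layer-cake to reduce the claim to finiteness of $\E_\alpha[e^{\xi_\tau}]$, and finally control this last quantity via the Esscher-type martingale associated with $F(1)$. Fix $\alpha\in E$ and work under $\Prob_\alpha$, so $\tau$ is the first return of $J$ to $\alpha$.

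For the adaptation, set $S_u\coloneqq\inf\{t\geq 0:\xi_t>u\}$, so $\xi_{S_u}\geq u$ by right-continuity. On $\{S_u<\tau\}$, the strong Markov property at $S_u$ shows that, conditional on $J_{S_u}=\beta$, the remaining displacement $\xi_\tau-\xi_{S_u}$ has the law of $\xi$ evaluated at the hitting time of $\alpha$ under $\Prob_\beta$. Replicating the chain of inequalities in the proof of Lemma~\ref{lem:willekensMAP} then yields, for $0<u_0<u$,
\begin{align*}
\Prob_\alpha\!\left(\sup_{t\in[0,\tau]}\xi_t\geq u\right) \leq \frac{\Prob_\alpha(\xi_\tau>u-u_0)}{H(u_0)},
\end{align*}
where $H(u_0)\coloneqq\min_{\beta\in E} \Prob_\beta(\xi_{\rho_\alpha}\geq -u_0)$ and $\rho_\alpha$ denotes the hitting time of state $\alpha$. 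Since $\rho_\alpha<\infty$ a.s.\ under each $\Prob_\beta$ by irreducibility of $J$, one has $H(u_0)>0$ for $u_0$ sufficiently large. Combining the layer-cake formula with the trivial lower bound $\sup_{t\in[0,\tau]} e^{\xi_t}\geq 1$ then gives
\begin{align*}
\E_\alpha\!\left[\sup_{t\in[0,\tau]} e^{\xi_t}\right] = 1 + \int_0^\infty \Prob_\alpha\!\left(\sup_{t\in[0,\tau]}\xi_t>u\right) e^u\,du \leq e^{u_0} + \frac{e^{u_0}}{H(u_0)}\,\E_\alpha[e^{\xi_\tau}],
\end{align*}
after splitting the integral at $u_0$, using the above display on the tail, and substituting $v=u-u_0$.

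The principal obstacle is then finiteness of $\E_\alpha[e^{\xi_\tau}]$. The natural tool is the Esscher-type martingale $M_t\coloneqq e^{\xi_t-t\kappa(1)}h_{J_t}/h_{J_0}$, where $h$ is a positive right eigenvector of $F(1)$ with eigenvalue $\kappa(1)$. Since $\tau$ has exponential moments by the finite-state irreducibility of $J$, optional stopping at $\tau$ combined with $J_\tau=\alpha$ should give $\E_\alpha[e^{\xi_\tau-\tau\kappa(1)}]=1$; when $\kappa(1)\leq 0$ (i.e.\ \cramers number $\theta\geq 1$) this immediately forces $\E_\alpha[e^{\xi_\tau}]\leq 1$. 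In the contrary regime one instead decomposes $\xi_\tau$ through~(\ref{eqn:lampKiuDecomp}) as a finite independent sum of \levy increments $\xi^{(\beta)}_{\zeta_k}$ and jumps $U_{\beta,\gamma}$, conditions on the random sequence of visited states, and exploits the geometric tail of the number of transitions before return. The delicate steps are the justification of optional stopping at the unbounded time $\tau$ and, in the high-growth regime $\kappa(1)>0$, possibly supplementing the lemma with the pointwise assumption $\psi_\beta(1)<q_\beta$ for every $\beta\in E$, so that each $\E[\exp(\xi^{(\beta)}_{\zeta_k})]=q_\beta/(q_\beta-\psi_\beta(1))$ remains finite.
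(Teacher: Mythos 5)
Your reduction is sound and in fact mirrors machinery the paper already sets up: the return-time version of Lemma \ref{lem:willekensMAP}, with $H(u_0)=\min_{\beta\in E}\Prob_\beta(\xi_{\rho_\alpha}\geq -u_0)$ for $\rho_\alpha$ the hitting time of $\alpha$ (strictly positive for $u_0$ large by ergodicity of $J$), combined with the layer-cake computation, correctly reduces the claim to the finiteness of $\E_\alpha[e^{\xi_\tau}]$. The paper's own argument takes a more elementary route that avoids any maximal inequality: in the $|E|=2$ case it writes $\tau=T_2$, bounds $\sup_{t\in[0,T_2]}e^{\xi_t}$ by the sum of the suprema over $[0,T_1)$ and $(T_1,T_2)$ plus the endpoint values $e^{\xi_{T_1}}$ and $e^{\xi_{T_2}}$, and then must bound each of the four expectations; as written it stops there, so it is itself only a fragment.

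The genuine gap, in your proposal and in the statement alike, is the final finiteness assertion. Under the lemma's (absent) hypotheses $\E_\alpha[e^{\xi_\tau}]$ is simply not finite in general: already $\E_\alpha[e^{\xi_{T_1}}]=\E[e^{\xi^{(\alpha)}_{T_1}}]\,\E[e^{U_{\alpha,J_{T_1}}}]$ with $\E[e^{\xi^{(\alpha)}_{T_1}}]=\int_0^\infty q_\alpha e^{-q_\alpha t}e^{t\psi_\alpha(1)}dt=q_\alpha/(q_\alpha-\psi_\alpha(1))$, which diverges whenever $\psi_\alpha(1)\geq q_\alpha$, and $\sup_{t\in[0,\tau]}e^{\xi_t}\geq e^{\xi_{T_1}}$. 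So a hypothesis such as $\psi_\beta(1)<q_\beta$ for all $\beta$, or more robustly $\kappa(1)\leq0$, is indispensable; your instinct on this point is exactly right, but it means the lemma as stated is false rather than merely hard. In the regime $\kappa(1)\leq0$ your Esscher argument does close the proof, though not via optional stopping as you phrase it: exponential moments of $\tau$ do not by themselves give $\E[M_\tau]=1$, and that identity can fail; what you need is only $\E_\alpha[e^{\xi_\tau-\tau\kappa(1)}]\leq1$, which follows from Fatou applied to the nonnegative martingale $M_{t\wedge\tau}$, whence $\E_\alpha[e^{\xi_\tau}]\leq1$ since $-\tau\kappa(1)\geq0$ and $h_{J_\tau}=h_{J_0}$. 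In the regime $\kappa(1)>0$ your sketch (decompose via (\ref{eqn:lampKiuDecomp}), condition on the visited states, exploit the geometric tail of the number of switches) has the right shape but is not carried out, and for $|E|>2$ even $\psi_\beta(1)<q_\beta$ for all $\beta$ need not suffice, since the resulting bound has the form $\E[C^N]$ with $N$ geometrically tailed. In short: the reduction is correct, but the key step is unproven and, absent added hypotheses, unprovable.
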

                \begin{proof}
                    In the $|E|=2$ case:
                    \begin{align*}
                        \E\left[ \sup_{t\in[0,T_2]} e^{\xi_t} \right]
                        &\leq \E\left[ \max\left\{ \sup_{t\in[0,T_1)}e^{\xi_t}, e^{\xi_{T_1}}, \sup_{t\in(T_1,T_2)}e^{\xi_t}, e^{\xi_{T_2}} \right\} \right]\\
                        &\leq \E\left[ \sup_{t\in[0,T_1)}e^{\xi_t} \right] + \E\left[ e^{\xi_{T_1}} \right] + \E\left[ \sup_{t\in(T_1,T_2)}e^{\xi_t}\right] + \E\left[ e^{\xi_{T_2}} \right] \\
                        &\leq \E\left[ \sup_{t\in[0,T_1)}e^{\xi_t} \right] + \E\left[ e^{\xi_{T_1}} \right] + \E\left[ \sup_{t\in(T_1,T_2)}e^{\xi_t}\right] + \E\left[ e^{\xi_{T_2}} \right]
                    \end{align*}
                \end{proof}

                The following result gives sufficient conditions for uniform integrability of $Y$.
                \begin{lem}
                    Let $\tau_j$ denote the first return time of $J$ to $j\coloneqq J_0$. If $C_1\coloneqq\E_j|Y_{\tau_j}| < 1$ then $\{Y_t,t\geq0\}$ is uniformly integrable. 
                \end{lem}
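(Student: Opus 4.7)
The plan is to exploit the regenerative structure of $(J,\xi)$ at the successive returns of $J$ to $j$ and to use the hypothesis $C_1<1$ to produce geometric decay along this skeleton; from this I will deduce that $\sup_{t\geq 0}Y_t$ is integrable under $\Prob_j$, and then uniform integrability of $\{Y_t:t\geq 0\}$ will follow by domination.

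First I would let $0=\rho_0<\rho_1<\rho_2<\cdots$ denote the successive return times of $J$ to $j$, which are almost surely finite by ergodicity. By the strong Markov property applied at each $\rho_n$, the shifted segments $(J_{\rho_{n-1}+\cdot},\,\xi_{\rho_{n-1}+\cdot}-\xi_{\rho_{n-1}})$ on $[0,\rho_n-\rho_{n-1}]$ are i.i.d.\ in $n$ with the common law of $(J,\xi)$ on $[0,\tau_j]$ under $\Prob_j$. In particular the increments $\Delta_n\coloneqq\xi_{\rho_n}-\xi_{\rho_{n-1}}$ are i.i.d.\ with $\E_j[e^{\Delta_n}]=C_1$, so $\E_j[e^{\xi_{\rho_n}}]=C_1^n$. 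Similarly, the normalised excursion suprema $R_n\coloneqq\sup_{t\in[\rho_{n-1},\rho_n)}e^{\xi_t-\xi_{\rho_{n-1}}}$ are i.i.d.\ copies of $R\coloneqq\sup_{s\in[0,\tau_j)}Y_s$, each independent of $\mathcal{F}_{\rho_{n-1}}$. Since $\rho_n\to\infty$ almost surely,
\[
\sup_{t\geq 0}Y_t\;\leq\;\sum_{n=1}^\infty e^{\xi_{\rho_{n-1}}}R_n,
\]
and taking expectations and invoking the independence just noted yields
\[
\E_j\!\left[\sup_{t\geq 0}Y_t\right]\;\leq\;\E_j[R]\sum_{n=0}^\infty C_1^n\;=\;\frac{\E_j[R]}{1-C_1}.
\]

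The main obstacle is to verify that $\E_j[R]<\infty$. First, the finiteness of $C_1$ forces, via a strong-Markov argument at $\tau_j$ similar in spirit to the one used in the proof of Lemma \ref{lem:noStrictLoclMG}, every component $\xi^{(\alpha)}$ and every jump $U_{\alpha,\beta}$ in the decomposition (\ref{eqn:lampKiuDecomp}) to possess a finite exponential moment, so by Theorem \ref{thm:intLampKiu} we have $\E[Y_t]<\infty$ for every $t\geq 0$. On each excursion $[0,\tau_j)$ the process $\xi$ is a concatenation of at most $|E|$ L\'evy segments joined by the random jumps $U_{\alpha,\beta}$, and conditioning on the embedded sequence of visited states together with the segment durations I would apply the maximal inequality of Lemma \ref{lem:willekensMAP} combined with the integration-by-parts argument used to derive (\ref{eqn:supYT-K0}) to each such segment; summing the resulting bounds over the at most $|E|$ pieces of an excursion and then multiplying by the previously accumulated factor $e^{\xi_{\rho_{n-1}}}$ will give $\E_j[R]<\infty$. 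Combined with the geometric sum above this shows that $\sup_{t\geq 0}Y_t\in L^1(\Prob_j)$, and since $0\leq Y_t\leq\sup_{s\geq 0}Y_s$ for every $t\geq 0$, the family $\{Y_t:t\geq 0\}$ is dominated by an integrable random variable and is therefore uniformly integrable.
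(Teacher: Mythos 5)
Your overall architecture is sound and in fact goes a step further than the paper's own (draft) argument: the paper bounds $\E[Y_t]\leq C_2\,\E\bigl[C_1^{M_t}\bigr]$, where $M_t$ counts returns to $j$ by time $t$ and $C_2\coloneqq\E_j\bigl[\sup_{s\in[0,\tau_j)}Y_s\bigr]$, and deduces $\E[Y_t]\to0$ as $t\to\infty$, whereas you dominate the whole family by the single integrable random variable $\sup_{t\geq0}Y_t$ via the regenerative decomposition and the geometric series $\sum_n C_1^{n-1}$. The use of the strong Markov property at the return times to factor $\E\bigl[e^{\xi_{\rho_{n-1}}}R_n\bigr]=C_1^{n-1}\E_j[R]$ is correct, and $C_1<1$ is used exactly where it should be. Both routes, however, stand or fall on the same key ingredient, namely $\E_j\bigl[\sup_{s\in[0,\tau_j)}Y_s\bigr]<\infty$, which the paper isolates as a separate lemma.

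This is where your proposal has a genuine gap. Your justification rests on the claim that an excursion of $J$ away from $j$ consists of ``at most $|E|$'' L\'evy segments; this is false whenever $|E|\geq3$, since the embedded chain may wander in $E\setminus\{j\}$ for an unbounded (geometrically tailed) number of steps before returning, so the number of segments per excursion is a random variable with no deterministic bound. Consequently you cannot simply ``sum over the at most $|E|$ pieces''; you must sum over an unbounded number of pieces, each weighted by the accumulated multiplicative factor $e^{\xi_{T_k}}$, and show this series of expectations converges. Moreover, each segment runs for an independent exponential sojourn time rather than a fixed horizon, so Lemma \ref{lem:willekensMAP} and the bound (\ref{eqn:supYT-K0}) do not apply verbatim: the constant $H(u_0)^{-1}=\bigl(\min_\alpha\Prob_\alpha(\inf_{s\leq T}\xi_s\geq-u_0)\bigr)^{-1}$ degenerates as $T\to\infty$, so integrating the fixed-$T$ bound against the exponential law of the sojourn does not obviously converge. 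A correct argument needs (i) a maximal inequality at an independent exponential horizon, giving $\E\bigl[\sup_{s<\zeta_\alpha}e^{\xi^{(\alpha)}_s}\bigr]\leq c\,\E\bigl[e^{\xi^{(\alpha)}_{\zeta_\alpha}}\bigr]<\infty$ (finiteness of the right-hand side, i.e.\ $\psi_\alpha(1)<q_\alpha$, being a consequence of $C_1<\infty$ and irreducibility), and (ii) control of the sum over the unboundedly many segments of one excursion, e.g.\ by conditioning on the embedded state sequence and exploiting the geometric tail of its length. Neither step is present, so as written the finiteness of $\E_j[R]$ — and hence the conclusion — is not established.
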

                \begin{proof}
                    Suppose $K>0$, then for $t>0$
                    \begin{align*}
                        \E\left[ |Y_t|; |Y_t|>K \right]
                        &\leq \E \left[ |Y_t| \right]\\
                        &=\E\left[ \E\left[ \exp(\xi_{T(t)}) \exp( \xi_{t}-\xi_{T(t)} ) \:\middle|\: \sigma(T(t)) \right]\right]\\
                        &=\E\left[ \E\left[ \exp(\xi_{T(t)}) \:\middle|\: \sigma(T(t)) \right] \E\left[\exp( \xi_{t}-\xi_{T(t)} ) \:\middle|\: \sigma(T(t)) \right]\right]\\
                        &=\E\left[ C_1^{M_t} \E_j\left[ \exp(\hat{\xi}_{t-T(t)})\:\middle|\:\sigma(T(t))\right] \right].
                    \end{align*}
                    However, \textbf{check we have enough independence here!!!!}
                    \begin{align*}
                        \E_j\left[ \exp(\hat{\xi}_{t-T(t)})\:\middle|\:\sigma(T(t))\right]
                        &\leq \E_j\left[ \sup_{s\in(T(t),S(t)}\exp(\xi_{s-T(t)}) \right]\\
                        &=\E_j\left[ \sup_{s\in(0,\tau_j)}\exp(\xi_s)  \right]
                        \eqqcolon C_2
                    \end{align*}
                    which is finite by the previous lemma and the assumption $\E_j|Y_{\tau_j}|<1$. Thus,
                    \begin{align*}
                        \E\left[ |Y_t|; |Y_t|>K \right]
                        \leq C_2\E\left[ C_1^{M_t} \right]\rightarrow 0
                    \end{align*}
                    as $t\rightarrow\infty$ since $C_1<0$ (\textbf{check convergence is strong enough}).
                \end{proof}
            \fi


        \if{}    
            The following result means that the integrability of $Y$ can't be recovered by conditioning on the state of $J$ at any fixed time. 
            
            \begin{lem}
                \label{lem:completeIntFail}
                Suppose $(J,\xi)$ is a MAP such that $\E\left[ \exp(\xi_T)\:|\:J_T=\sigma \right]=\infty$ for some $T>0$ and $\sigma\in E$. Then, $\E\left[ \exp(\xi_t) \: | \: J_t=\gamma \right]=\infty$ for all $t>0$ and $\gamma\in E$.
            \end{lem}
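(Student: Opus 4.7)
The plan is to reduce the conditional statement to an unconditional integrability statement, apply the time-propagation equivalence already established in Theorem \ref{thm:intLampKiu} to conclude that $\E[\exp(\xi_s)]$ is infinite at every positive time $s$, and then use the MAP property together with the irreducibility of $J$ to spread this infiniteness across every state at every time.

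First I would note that, since $J$ is irreducible and ergodic on the finite set $E$, one has $\Prob(J_T = \sigma) > 0$, and so the hypothesis gives $\E[\exp(\xi_T); J_T = \sigma] = \infty$, whence $\E[\exp(\xi_T)] = \infty$. The equivalence of items (\ref{list:integrabilityLpatT}) and (\ref{list:integrability:Lp}) in Theorem \ref{thm:intLampKiu}, applied with $p = 1$, then forces $\E[\exp(\xi_s)] = \infty$ for every $s > 0$.

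Next, fix arbitrary $t > 0$ and $\gamma \in E$, and choose any $s \in (0,t)$. Since $E$ is finite, the identity $\sum_{\alpha \in E}\E[\exp(\xi_s); J_s = \alpha] = \E[\exp(\xi_s)] = \infty$ produces some $\alpha^* \in E$ with $\E[\exp(\xi_s); J_s = \alpha^*] = \infty$. Writing $\xi_t = \xi_s + (\xi_t - \xi_s)$, conditioning on $\mathcal{F}_s$ and applying the MAP property (\ref{eqn:MapDefn}), I would derive
\begin{align*}
    \E[\exp(\xi_t); J_t = \gamma]
    &\geq \E[\exp(\xi_t); J_s = \alpha^*, J_t = \gamma]\\
    &= \E[\exp(\xi_s); J_s = \alpha^*] \cdot \E_{\alpha^*}[\exp(\xi_{t-s}); J_{t-s} = \gamma].
\end{align*}
The first factor is infinite by construction, while the second is strictly positive because $\exp(\xi_{t-s}) > 0$ almost surely and $\Prob_{\alpha^*}(J_{t-s} = \gamma) > 0$ by irreducibility of $J$. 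Hence the product equals $+\infty$, and dividing through by $\Prob(J_t = \gamma) > 0$ yields $\E[\exp(\xi_t) \mid J_t = \gamma] = \infty$.

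The main subtlety is that one cannot read off the conclusion at a specific target state $\gamma$ directly from the unconditional infiniteness $\E[\exp(\xi_t)] = \infty$, which only guarantees infiniteness at \emph{some} state at time $t$. The trick is to evaluate the unconditional infiniteness at a strictly earlier time $s<t$, leaving a positive interval $t-s$ during which the MAP property and the irreducibility of $J$ can be combined to transport the unidentified bad state $\alpha^*$ at time $s$ to any prescribed $\gamma$ at time $t$ with positive probability; this is where both ingredients, the time-propagation theorem and the structure of the driving chain, are essential.
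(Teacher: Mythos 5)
Your proof is correct, and its second half takes a genuinely different route from the paper's. The paper first invokes Theorem \ref{thm:intLampKiu} to pass from the hypothesis to statement (\ref{list:integrability:decomp}), i.e.\ it identifies a specific component of the decomposition (\ref{eqn:lampKiuDecomp}) --- some $\xi^{(\alpha)}$ or some $U_{\alpha,\beta}$ --- lacking exponential moments, and then argues by cases: for each possible failing component it constructs an explicit path event of positive probability ending in state $\gamma$ at time $t$ (either holding $J$ in the bad state over a subinterval, or forcing a jump from $\alpha$ to $\beta$ before $t$) on which the infinite factor appears in the product. You instead use only the equivalence of (\ref{list:integrability:Lp}) and (\ref{list:integrabilityLpatT}) to propagate $\E[\exp(\xi_s)]=\infty$ to every positive time, then pigeonhole over the finite state space at an intermediate time $s<t$ to find an unnamed bad state $\alpha^*$, and finish with a single application of the Markov additive factorization together with irreducibility. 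Your argument is shorter, avoids the case analysis entirely, and never needs to know \emph{which} component of the decomposition fails; the paper's version pays for its extra length by exhibiting the structural source of the divergence explicitly. The only points worth flagging are routine: extending the MAP identity (\ref{eqn:MapDefn}) from bounded continuous $f$ to $f(J,x)=e^{x}\mathbbm{1}_{\{J=\gamma\}}$ requires a standard monotone convergence step (which the paper also uses tacitly throughout), and the positivity of $\Prob(J_T=\sigma)$, $\Prob_{\alpha^*}(J_{t-s}=\gamma)$ and $\Prob(J_t=\gamma)$ all follow from irreducibility of the finite-state chain at strictly positive times, which you correctly rely on.
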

            \begin{proof}
                Suppose there is a $T\geq0$ and $\sigma\in E$ such that $\E[\exp(\xi_T)\:|\: J_T=\sigma]=\infty$. Then, by Theorem \ref{thm:intLampKiu}, for all $t\geq0$, $Y_t$ isn't integrable and, for some $\alpha,\beta\in E$, one of $\xi^\alpha_1$ and $U^{\alpha,\beta}$ doesn't have exponential moments. 
                
                Fix $t>0$ and $\gamma\in E$. First, suppose $\xi^\sigma_1$ doesn't have exponential moments.  Then, since $K$ is a time homogeneous irreducible Markov chain, $p\coloneqq\Prob(J_s=\sigma, \:\forall s\in[t/4,3t/4]; J_t=\gamma )>0$ and
                \begin{align*}
                    \E\left[ \exp(\xi_t);J_t=\gamma \right]
                    &\geq \E\left[ \exp(\xi_t);J_t=\gamma; J_s=\sigma, \:\forall s\in[t/4,3t/4] \right]\\
                    &\geq p\E\left[ \exp(\xi_{t/4}) \:\middle|\: J_{t/4}=\sigma \right]\E\left[ \exp\left( \xi_{3t/4} - \xi_{t/4} \right) \:\middle|\: J_s=\sigma\:\forall s\in[t/4,3t/4] \right]\\
                    &\qquad\qquad\qquad\times\E\left[ \exp(\xi_{t} - \xi_{3t/4}) \:\middle|\: J_{3t/4}=\sigma; J_t=\gamma \right]\\
                    &=\E\left[ \exp(\xi_{t/4}) \:\middle|\: J_{t/4}=\sigma \right]\E\left[ \exp\left( \xi^\sigma_{t/2} \right)\right]\E\left[ \exp(\xi_{t/4}) \:\middle|\: J_0=\sigma;J_{t/4}=\gamma \right].
                \end{align*}
                Hence, if $\xi_1^\sigma$ doesn't have exponential moments then, $\E[\exp(\xi_t);J_t=\gamma]=\infty$ also.
                
                We now consider the case where there exists $\alpha,\beta\in\ E$ such that $\E[\exp(U^{(\alpha,\beta)})]=\infty$ and again fix $t>0$. Let $\tau:=\inf\{s\geq0\:|\: J_{s-}=\alpha,J_s=\beta\}$. Then, $p\coloneqq\Prob(\tau<t\:;\:J_t=\gamma)>0$ by properties of the Markov chain $K$. Hence,
                \begin{align*}
                    \E\left[ \exp(\xi_t); J_t=\gamma \right]
                    &\geq \E\left[ \exp(\xi_t);J_t=\gamma;\tau<t \right]\\
                    &= p\E\left[ \exp(\xi_{\tau})\:\middle|\: J_t=\gamma;\: \tau<t \right]\E\left[ \exp\left( \xi_t - \xi_{\tau} \right)\:\middle|\:\tau<t;J_t=\gamma \right]\\
                    &=p \E\left[ \exp(\xi_{\tau-} +U^{(\alpha,\beta)})\:\middle|\: J_t=\gamma;\: \tau<t \right]
                    \E\left[ \exp\left( \xi_{t} -\xi_{\tau} \right)\:\middle|\:\tau<t;J_t=\gamma \right]\\
                    &=p \E\left[ \exp(\xi_{\tau-})\:\middle|\: J_t=\gamma;\: \tau<t \right]
                    \E\left[ \exp(U^{(\alpha,\beta)}) \right]
                    \E\left[ \exp\left( \xi_{t} -\xi_{\tau} \right)\:\middle|\:\tau<t;J_t=\gamma \right].
                \end{align*}
                Since the first and the third expectations are non-zero and the middle expectation is infinite by assumption, we have that $\E[\exp(\xi_t);\: J_t=\gamma]=\infty$.

                        \if{}      
                        \begin{align*}
                            &\E[\exp(\xi_t)\:;\:J_t=\gamma] \\
                            &\qquad= \E\left[ \exp\left( \xi_{\sigma(t)} + \sum_{k=0}^{N_t-1}\left( \xi_{\zeta^{s(x,k)}}^{s(x,k)} + U^{s(x,k)} \right) \right) \:;\: J_t=\gamma  \right]\\
                            &\qquad= \E\left[  \E\left[\exp(\xi_{\sigma(t)}) \:\middle|\: \mathcal{G} \right]\mathbbm{1}_{\{J_t=\gamma\}}
                            \prod_{k=0}^{N_t-1}\left( \E\left[ \exp\left(\xi_{\zeta^{s(x,k)}}^{s(x,k)} \right) \:\middle|\: \mathcal{G} \right]\E\left[ \exp\left(U^{s(x,k)} \right) \:\middle|\: \mathcal{G} \right] \right)  \right]\\
                            &\qquad\geq \E\left[ \mathbbm{1}_{\{J_t=\gamma\}} \E\left[\exp(\xi_{1}^-)\right]^{\sigma(t)} \E\left[\exp(\xi^{s(x,0)}_{1})\right]^{\zeta^{s(x,0)}}\E\left[\exp(\xi^{s(x,1)}_{1})\right]^{\zeta^{s(x,1)}}  \right.\\
                                &\qquad\qquad\left.\cdot\prod_{k=0}^{N_t-1}\left( \E\left[ \exp\left(\xi_{\zeta^{s(x,k)}}^{s(x,k)} \right) \:\middle|\: \mathcal{G} \right]\E\left[ \exp\left(U^{s(x,k)} \right) \:\middle|\: \mathcal{G} \right] \right)  \right]\E\left[ \exp(U_+) \right]\E\left[ \exp(U_-) \right]
                        \end{align*}

                        and  from the lack of exponential moments of one of $\xi^+_1,\xi^-_1,U^+,U^-$ and their independence from $\zeta^{s(x,k)}$ at least one of the inner expectations must be infinite giving the required result.
                    \fi
                
            \end{proof}
        \fi    
    \qed

\section{Pricing of European Options}
    \label{sec:europeanOptions}
    A European option on an asset with price process $\{Y_t\::\:t\geq0\}$ is a contract which at its maturity, some fixed time $T\geq0$, pays out $H(Y_T)$, where the payoff function, $H:\R^+\rightarrow\R$, is predetermined. In the case of a European call option, where the owner of the option has the right but not the obligation to buy the asset at some predetermined strike price $k$ at maturity, the payoff function is given by $H(x)\coloneqq\max(x-k,0)$. We will suppose that the risk free rate of interest is fixed at $r$. 
    
    Throughout the remainder of this paper we will assume that there is a Markov chain $J$, such that under a risk neutral probability measure $\Prob$, the process $(J,\log(Y))$ is a MAP. The Markov chain $J$ corresponds to the state of the market, allowing the behaviour of the price process to change when the market state changes.
    In \cite{palmowski2019}, it is shown that the market can be made complete by adding additional securities related to the jumps of $Y$ and changes of $J$. Then, an equivalent martingale measure can be found under which $(J,\log(Y))$ remains a MAP.
    Let the $\{\mathcal{F}_t\}_{t\geq0}$ be the filtration of the equivalent martingale measure. For ease of notation, we set $\xi_t\coloneqq\log(Y_t)$ for all $t\geq0$.
    
    By standard no arbitrage arguments, the price of the European option, with payoff $H$ and maturity $T$, at time $t\in(0,T)$ is given by
    \begin{align*}
         e^{-r(T-t)}\E\left[H(Y_T)\:\middle|\:\mathcal{F}_t\right].
    \end{align*}
    From the Markov additive property, this is a function of the current value of the MAP, $(J_t,\xi_t)$, and the time to maturity, $T-t$, and is given by 
    \begin{align*}
        e^{-r(T-t)}\E_{(J_t,\xi_t)}\left[ H\left( \hat{Y}_{T-t} \right) \right],
    \end{align*}
    where $(\hat{J},\hat{Y})$ is an independent and identically distributed copy of $(J,Y)$. Throughout the remainder of this section we will denote by $C_H(y,\alpha,\tau)$ the price of the European option, with payoff function $H$ and time until maturity $\tau$, if the current market state is given by $(\alpha,y)\in E\times\R^+$. That is, for $(\alpha,y)\in E\times\R^+$ and $0\leq\tau$,
    \begin{align*}
      C_H(\alpha,y,\tau) = e^{-r\tau}\E_{(\alpha,\log(y))}\left[H\left(\hat{Y}_{\tau}\right)\right].
    \end{align*}
    

    Under an exponential \levy model, two common techniques for pricing European options are integral transform methods, for example see \cite{fadugba2016valuation}, \cite{carr1999option}, and solving a Partial Integro-Differential Equation (PIDE), for example see \cite[Chapter 12]{tankov2003financial}. We will adapt these two methods to exponential MAP models.

\if{}
\subsection{Fourier Transform} 
    \label{sssec:fourierTransform}
    The Fourier transform approach is useful when we know the matrix exponent of a process but not the corresponding density function. Even for simple MAPs, it can be hard to find a closed form expression for their transition densities, whereas the Matrix exponent can be easily written down. Whilst analytical Fourier inversions are often not known, the Fast Fourier Transform (see for example \cite{burrusfast}) provides an efficient way of computing them numerically, making this a useful method in practice.
 
    For each $\sigma\in E$, let $g_\sigma(x) := H(e^x)$ so that the European option payoff function is given by
    $ %
            H(Y_T) = \sum_{\sigma\in E}g_\sigma(\xi_T)\mathbbm{1}_{\{J_T=\sigma\}}.
    $ %
        
    We will follow the convention that, for a function $f\in L^2(\R)$, it's Fourier transform and Fourier inverse transform are, respectively, given by
    \begin{equation*}
        \hat{f}(u) := \left( \mathcal{F}f \right)(u) := \int_\R f(u)e^{ixu}dx
        \qquad\text{and}\qquad
        f(x) := \left( \mathcal{F}^{-1}\hat{f}  \right)(x) = \frac{1}{2\pi}\int_\R\hat{f}(u)e^{-iux}dx.
    \end{equation*}
    By analogy with \cite[Section 3.1]{carr1999option}, for $R\in\R$ and $a,b\in E$ we consider the intermediate function
    \begin{align*}
        c_{a,b}^{(R)}(T,y) := e^{-Ry}\E\left[ H(Y_T) \:;\: J_T=b \:\middle|\: J_0=a,\: \xi_0=y \right]
    \end{align*}
    with domain $\R^+\times\R$. The following lemma concerns the Fourier transform of $c^{(R)}_{a,b}$.
    
    \begin{lem}
        \label{lem:dampedFourier}
        Let $a,b\in E$ and $T\geq0$. If there exists an $R\in\R$ such that
        \begin{equation}
            \label{eqn:fourierRfinite}
            \E_a[|Y_{T}|^R;J_T=b] < \infty,
        \end{equation}
        \begin{equation}
            \label{eqn:fourierGfinite}
            g_b(x)e^{-Rx}\in L^1(\R)\cap L^2(\R)
        \end{equation}
        and
        \begin{equation}
            \label{eqn:fourierGL1}
            \hat{g}_b(u+iR)\in L^1(\R)
        \end{equation}
        then, as a function of $x$, $c^{(R)}_{a,b}(T,x)\in L^2(\R)\cap L^\infty(\R)$ and for all $u\in\R$,
        \begin{align}
            \label{eqn:smallCFT}
            c^{(R)}_{a,b}(T,y) = \mathcal{F}^{-1}_u\left\{\hat{g}_b(u+iR)\left( e^{TF(R-iu)} \right)_{a,b}\right\}(y),
        \end{align}
        where the inverse Fourier transform is taken with respect to $u$.
    \end{lem}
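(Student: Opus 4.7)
The plan is to compute the Fourier transform of $c^{(R)}_{a,b}(T,\cdot)$ directly, identify it as the product claimed in (\ref{eqn:smallCFT}), and then invoke Fourier inversion. By the Markov additive property, under $\Prob_{a,y}$ I can write $\xi_T=y+\tilde{\xi}_T$, where $(\tilde{J},\tilde{\xi})$ is an independent copy of $(J,\xi)$ started from $(a,0)$; denoting $P_{a,b}(T,dx)\coloneqq\Prob_a(\xi_T\in dx;J_T=b)$, this gives the representation
$$c^{(R)}_{a,b}(T,y)=e^{-Ry}\int_{\R}g_b(y+x)\,P_{a,b}(T,dx).$$

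I would then compute the Fourier transform via Fubini. The substitution $z=y+x$ shows that the double integral of the modulus equals $\|g_b(\cdot)e^{-R\cdot}\|_{L^1}\cdot\E_a[e^{R\xi_T};J_T=b]$, which is finite by (\ref{eqn:fourierGfinite}) and (\ref{eqn:fourierRfinite}). This simultaneously places $c^{(R)}_{a,b}(T,\cdot)$ in $L^1(\R)$ and legitimises the exchange of integrals; after applying the same substitution inside the transform, the double integral factorises into $\hat{g}_b(u+iR)$ times $\E_a[e^{(R-iu)\xi_T};J_T=b]=(e^{TF(R-iu)})_{a,b}$, yielding
$$\widehat{c^{(R)}_{a,b}}(T,u)=\hat{g}_b(u+iR)(e^{TF(R-iu)})_{a,b}.$$

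To close the argument, I would note that $|(e^{TF(R-iu)})_{a,b}|\leq\E_a[e^{R\xi_T};J_T=b]$ is uniformly bounded in $u$; combined with (\ref{eqn:fourierGL1}) this places $\widehat{c^{(R)}_{a,b}}(T,\cdot)$ in $L^1(\R)$, so Fourier inversion on $L^1\cap\mathcal{F}^{-1}(L^1)$ delivers (\ref{eqn:smallCFT}) almost everywhere. The $L^\infty$ membership is then immediate from $\|c^{(R)}_{a,b}(T,\cdot)\|_\infty\leq(2\pi)^{-1}\|\widehat{c^{(R)}_{a,b}}(T,\cdot)\|_{L^1}$, while for the $L^2$ claim I would use (\ref{eqn:fourierGfinite}) together with Plancherel to get $\hat{g}_b(\cdot+iR)\in L^2$; multiplication by the bounded matrix entry preserves $L^2$-membership, and a final application of Plancherel transfers the conclusion back to $c^{(R)}_{a,b}(T,\cdot)$. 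The only genuine obstacle is the Fubini step, whose hypotheses match (\ref{eqn:fourierRfinite}) and (\ref{eqn:fourierGfinite}) essentially by design; everything else is routine bookkeeping with standard Fourier-theoretic inequalities.
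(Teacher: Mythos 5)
Your proposal is correct and follows essentially the same route as the paper: both write $c^{(R)}_{a,b}(T,y)$ as $e^{-Ry}$ times a convolution of $g_b$ against the law of $\xi_T$ on $\{J_T=b\}$, compute the Fourier transform by Fubini and a change of variables to factor it as $\hat{g}_b(u+iR)\,(e^{TF(R-iu)})_{a,b}$, bound the matrix-exponent entry by $\E_a[|Y_T|^R;J_T=b]$ to get $L^1$ of the transform and hence invertibility, and then obtain the $L^2$ and $L^\infty$ claims from Plancherel and the boundedness of that entry. Your explicit verification of the Fubini hypotheses is a welcome addition that the paper leaves implicit.
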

    \begin{proof}
        For fixed $T>0$, let
        $
            p_{a,b}(x) := \frac{\partial}{\partial x}\Prob\left( \xi_T \leq x; J_T=b  \:\middle|\: J_0=a,\: \xi_0=0 \right). 
        $ 
         where we consider the derivative in the distributional sense in necessary. Then, making use of the Markov additive property in the first equality; using Fubini's theorem and a change of variables in the second equality; and identifying a Fourier transform and the Matrix exponent in the last equality, we have
        \begin{multline*}
            \hat{c}^{(R)}_{a,b}(T,u) 
            = \int_\R e^{iuy} e^{-Ry} \int_\R g_b(x+y)p_{a,b}(x) dx dy\\
            = \int_\R e^{x(R-iu)} p_{a,b}(x) \int_\R e^{iy(u+iR)}g_b(y) dy dx
            = \hat{g}_b(u+iR) \left(e^{TF(R-iu)}\right)_{a,b}.
        \end{multline*}
        
        From (\ref{eqn:fourierRfinite}) and the definition of the matrix exponent,
        \begin{align*}
            |\left( e^{TF(R-iu)} \right)_{a,b}| \leq \E_a\left[ |Y_T|^R; J_T=b \right] < \infty. 
        \end{align*}
        Combining this with (\ref{eqn:fourierGL1}), we can conclude $\hat{c}^{(R)}_{a,b}(T,u)\in L^1(\R)$.
        Thus, we can apply a Fourier inversion to recover $c^{(R)}_{a,b}$ and obtain the result of the (\ref{eqn:smallCFT}).
        
        Moreover, since $g(x)e^{-Rx}\in L^1(\R)\cap L^2(\R)$, by Plancherel's Theorem \cite[Chapter 4.3, pp 187, Theorem 1]{evans2010partial}, 
        $
            \hat{g}(u+iR) = \mathcal{F}\{g(x)e^{-Rx}\}\in L^\infty(\R)\cap L^2(\R).    
        $
                \if{}
                The $L^2$-norm of $\hat{c}^{(R)}_{a,b}$ is then given by
                \begin{align*}
                    \|\hat{c}^{(R)}_{a,b}(T,\cdot)\|^2_{L^2(\R)} 
                    &= \int_\R \left( \hat{g}_b(u+iR)\left( e^{TF(R-iu)} \right)_{a,b}\right)^2 du\\
                    &\leq \int_\R \hat{g}_b^2(u+iR)\E_a\left[ Y_T^R \right]^2 du\\
                    &= \E_a\left[ Y_T^R \right]^2 \|\hat{g}_b^2(u+iR)\|^2_{L^2(\R)}\\
                    &<\infty
                \end{align*}
                \fi
        Hence,
        \begin{align*}
            \|\hat{c}^{(R)}_{a,b}(T,\cdot)\|^2_{L^2(\R)}
            \leq \E_a\left[ Y_T^R \right]^2 \|\hat{g}_b^2(u+iR)\|^2_{L^2(\R)}<\infty
        \end{align*}
        and
        \begin{align*}
            \| \hat{c}^{(R)}_{a,b} \|_{L^\infty(\R)}
            &= \sup_{u\in\R} \left| \hat{g}_b(u+iR)\left( e^{TF(R-iu)} \right)_{a,b} \right|
            \leq \| \hat{g}_b(u+iR) \|_{L^\infty(\R)}\E_a\left[ |Y_T|^R; J_T=b \right]
            <\infty
        \end{align*}
        so, $\hat{c}^{(R)}_{a,b}\in L^\infty(\R)\cap L^2(\R)$.
        Then, $c^{(R)}_{a,b}\in L^2(\R)\cap L^\infty(\R)$ follows from Placherel's theorem and standard integral results.
        
    \end{proof}

    By summing up (\ref{eqn:smallCFT}) over all $b\in E$ and applying the Fourier inversion theorem, we can obtain the following result.
    
    \begin{cor}[Fourier Transform Pricing under an exponential MAP]
        \label{cor:FTeuropeanOption}
    
        Suppose that for each $a,b\in E$, there exists $R_b\in\R$, such that
        \begin{equation*}
            \E_\sigma\left[ |Y_T|^{R_b}; J_T=b \right]<\infty,
        \end{equation*}
        \begin{equation*}
            g_b(x)e^{-R_b} \in L^1(\R) \cap L^2(\R)
        \end{equation*}
        and
        \begin{equation*}
            \hat{g}_b(u+iR)\in L^1(\R)
        \end{equation*}
        then, 
        \begin{equation*}
            C_H(y,a,t,T)= \frac{e^{-r(T-t)}}{2\pi}\sum_{b\in E}\int_\R e^{\log|y|(R_b-iu)}\hat{g}_b(u+iR_b)\left( e^{(T-t)F(R_b-iu)} \right)_{a,b}du.
        \end{equation*}
    \end{cor}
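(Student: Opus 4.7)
The plan is to apply Lemma \ref{lem:dampedFourier} separately for each pair $(a,b)\in E\times E$ and then sum the resulting inverse Fourier transforms. First, I would decompose the payoff according to the terminal state of the market regime: since $\{J_T = b\}_{b\in E}$ partitions the sample space and $H(Y_T) = \sum_{b\in E} g_b(\xi_T)\mathbbm{1}_{\{J_T=b\}}$ by the definition $g_b(x) := H(e^x)$, I can write
\begin{equation*}
  C_H(a,y,\tau) = e^{-r\tau}\E_{(a,\log y)}\!\left[H(\hat Y_\tau)\right] = e^{-r\tau}\sum_{b\in E}\E\!\left[H(Y_\tau);\, J_\tau=b \,\middle|\, J_0=a,\,\xi_0=\log y\right],
\end{equation*}
with $\tau = T-t$. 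For each $b\in E$, the summand equals $e^{R_b\log y}\, c^{(R_b)}_{a,b}(\tau,\log y)$ by the definition of $c^{(R)}_{a,b}$.

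The next step is to check that the three hypotheses of Lemma \ref{lem:dampedFourier} hold for every pair $(a,b)$ with the chosen exponent $R_b$: these are literally the three bulleted assumptions in the corollary (modulo identifying $g_b(x)e^{-R_b x}\in L^1(\R)\cap L^2(\R)$ in the second hypothesis and reading $R$ as $R_b$ in the third). Applying the lemma therefore yields, for each $b\in E$,
\begin{equation*}
  c^{(R_b)}_{a,b}(\tau, \log y) = \frac{1}{2\pi}\int_\R e^{-iu\log y}\, \hat g_b(u+iR_b)\,\bigl(e^{\tau F(R_b-iu)}\bigr)_{a,b}\,du,
\end{equation*}
where the integral converges absolutely because $\hat g_b(\cdot + iR_b)\in L^1(\R)$ and the matrix exponent is bounded in modulus by $\E_a[|Y_\tau|^{R_b};\,J_\tau=b]<\infty$ uniformly in $u$, as noted in the proof of the lemma.

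Multiplying by $e^{R_b\log y}$ absorbs the real part into the exponential in the integrand, giving
\begin{equation*}
  e^{R_b\log y}\, c^{(R_b)}_{a,b}(\tau,\log y) = \frac{1}{2\pi}\int_\R e^{\log y\,(R_b-iu)}\,\hat g_b(u+iR_b)\,\bigl(e^{\tau F(R_b-iu)}\bigr)_{a,b}\,du.
\end{equation*}
Summing over the finite index set $b\in E$ (no dominated convergence issue arises, as the sum is finite), multiplying by the discount factor $e^{-r\tau}$, and substituting $\tau = T-t$, yields the claimed formula for $C_H(y,a,t,T)$.

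I do not anticipate any real obstacle here: the corollary is essentially bookkeeping on top of Lemma \ref{lem:dampedFourier}. The only mildly delicate point is to match notation carefully — in particular, to read the corollary's condition $g_b(x)e^{-R_b}\in L^1\cap L^2$ as the lemma's $g_b(x)e^{-R_b x}\in L^1\cap L^2$, and to recognise $|y|=y$ since the domain of $C_H$ is $E\times\R^+\times\R^+$ — and to confirm that the inverse Fourier transform in the lemma has been written in the same sign convention as the integral in the corollary, which it has.
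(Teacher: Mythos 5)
Your proposal is correct and follows essentially the same route as the paper: decompose the payoff over the terminal states of $J$, identify each summand as $e^{R_b\log y}c^{(R_b)}_{a,b}$, verify the three hypotheses of Lemma \ref{lem:dampedFourier}, and sum the resulting inverse Fourier representations over the finite set $E$ before discounting. The only cosmetic difference is that the paper first treats $r=t=0$ and then invokes the Markov additive property and the discount factor, whereas you work directly with the time to maturity $\tau=T-t$; the substance is identical.
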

    
    \begin{proof}
        Initially we consider $r=t=0$ and sum up over the possible values of $J_T$ to obtain
        \begin{align*}
            C_H(y,\gamma,0,T) 
                &= \sum_{j\in E} \E_{(\gamma,y)}\left[ g_j(\xi_T); J_T=j \right]\\
                &= \sum_{j\in E} e^{R_j\log|y|} c^{(R_j)}_{\gamma,j}(0,T,\log|y|) \\
                &= \frac{1}{2\pi}\sum_{j\in E}\int_\R e^{\log|y|(R_j-iu)}\hat{g}_j(u+iR_j)\left( e^{TF(R_j-iu)} \right)_{\gamma,j} du
        \end{align*}
        where the last inequality comes from Lemma \ref{lem:dampedFourier}. 
        Then, by the Markov additive property, we can also obtain the result for $t\neq0$. If $r\neq0$ then we multiply by the discount factor $e^{-r(T-t)}$.

    \end{proof}
    
    As an example we will consider a European call option with strike $k\in\R^+$. In this case recall that $H(x) \coloneqq \max(x-k,0)$,  hence for all $\sigma\in E$ we have $g_\sigma(x) = \max(e^x - k,0)$.
    
    Then, computing the Fourier transform of $g_\sigma$ and taking $R_\sigma>1$, we have for all $u\in\R$,
    \begin{align*}
        \hat{g}_\sigma(u+iR_\sigma)
        &= k^{-R_\sigma+iu+1}\int_0^\infty e^{y(iu-R_\sigma+1)} - e^{y(iu-R_\sigma)} dy 
        = \frac{k^{iu-R_\sigma +1}}{(iu-R_\sigma +1)(iu-R_\sigma)}
        <\infty
    \end{align*}
    hence, for all $b\in E$ and $k\in\R^+$ we have the bound
    $
        |\hat{g}_b(u+iR_b)| 
        = \mathcal{O}(u^{-2})
    $  
    as $u\rightarrow\pm\infty$. Since $u,R_b\in\R$, then for $R_b\neq1$ there are no real roots for $u$ of the denominator, hence $\hat{g}_b(u+iR_b)\in L^2(\R)\cap L^1(\R)$ as a function of $u$. Then, the price of the European call option with strike $k\in\R^+$ is given by
    \begin{equation*}
        C_H(y,a,t,T)= \frac{e^{-r(T-t)}k}{2\pi}\sum_{b\in E}\int_\R \frac{e^{(\log(k)-\log(y))(iu-R_b)}}{(iu-R_b +1)(iu-R_b)}\left( e^{(T-t)F(R_b-iu)} \right)_{a,b}du.
    \end{equation*}

                \if{}
                \begin{prop}
            
                    In the case of a European call option with strike $k\in\R$, if $k>0$ then
                    \begin{equation*}
                        C_H(k,y,t,T)=   \frac{e^{-r(T-t)}}{2\pi}\int_\R e^{\log|y|(R_+-iu)}\hat{g}_+(u+iR_+)\left( e^{(T-t)F(R_+-iu)} \right)_{\sgn(y),+}du
                    \end{equation*}
                    and if $k<0$ then
                    \begin{multline*}
                        C_H(k,y,t,T)=e^{-r(T-t)}\left(|y|\left( e^{TF(1)} \right)_{\sgn(y),+} -k\Prob_y(Y_T>0)\right) \\+ \frac{e^{-r(T-t)}}{2\pi}\int_\R e^{\log|y|(R_--iu)}\hat{g}_-(u+iR_-)\left( e^{(T-t)F(R_--iu)} \right)_{\sgn(y),-}    du     
                    \end{multline*}   
                    where $\hat{g}_\pm$ is as specified above 
                            \if{}
                            \begin{equation*}
                                \hat{g}_+(u + iR_+) = \frac{k^{iu-R_++1}}{(iu-R_+)(iu-R_++1)} \qquad\text{ if }k>0,
                            \end{equation*}
                    
                            \begin{equation*}
                                \hat{g}_-(u +iR_-) =  \begin{cases}
                                                        \frac{(-k)^{iu-R_-+1}}{(iu-R_-)(iu-R_-+1)}  &\text{ if } k<0\\
                                                        0                                   &\text{ if } k\geq0
                                                    \end{cases}
                            \end{equation*}
                            \fi
                    and
                    \begin{equation*}
                        \Prob_y(Y_T>0) = \frac{1}{q_++q_-}\left( q_- + \sgn(y)e^{-(T-t)(q_++q_-)}q_{\sgn(y)} \right).
                    \end{equation*}
                \end{prop}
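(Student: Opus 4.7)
The plan is to reduce the computation to Corollary~\ref{cor:FTeuropeanOption} specialised to the two-state Lamperti--Kiu setting $E=\{+,-\}$, in which by convention the sign of the price process coincides with the current state of $J$, so that $\{J_T=+\}=\{Y_T>0\}$ and $\{J_T=-\}=\{Y_T<0\}$. Decomposing the call payoff as $H(Y_T)=g_+(\xi_T)\mathbbm{1}_{\{J_T=+\}}+g_-(\xi_T)\mathbbm{1}_{\{J_T=-\}}$ with $g_+(x)=\max(e^x-k,0)$ and $g_-(x)=\max(-e^x-k,0)$, I would split according to the sign of $k$, since these cases differ in which of the $g_\pm$ vanishes identically and which satisfies the hypotheses of Lemma~\ref{lem:dampedFourier}.

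For $k>0$, $g_-\equiv 0$, so only the $b=+$ summand of Corollary~\ref{cor:FTeuropeanOption} contributes. The integrability hypotheses on $g_+$ are verified by any $R_+>1$: the product $g_+(x)e^{-R_+x}$ is supported on $\{x>\log k\}$ and bounded there by $e^{(1-R_+)x}$, placing it in $L^1\cap L^2$, and a direct computation gives $\hat{g}_+(u+iR_+)=k^{iu-R_++1}/((iu-R_+)(iu-R_++1))$, which is $O(u^{-2})$ as $|u|\to\infty$ and hence in $L^1$. Substituting $a=\sgn(y)$ and $\xi_0=\log|y|$ into Corollary~\ref{cor:FTeuropeanOption} then yields the stated expression.

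For $k<0$ the function $g_+(x)=e^x-k$ is everywhere strictly positive and the constant term $-k$ destroys $L^1$-integrability after any real exponential damping, so Fourier inversion cannot be applied to $g_+$ directly. Instead I would evaluate the $J_T=+$ contribution in closed form by linearity:
\begin{equation*}
\E_y\!\left[(Y_T-k);\,J_T=+\right]=\E_y\!\left[Y_T;\,J_T=+\right]-k\,\Prob_y(J_T=+)=|y|\bigl(e^{(T-t)F(1)}\bigr)_{\sgn(y),+}-k\,\Prob_y(Y_T>0),
\end{equation*}
using the factorisation $\xi_{T-t}=\log|y|+(\xi_{T-t}-\xi_0)$ together with the defining property of the matrix exponent. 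For the $J_T=-$ contribution, the truncated payoff $g_-(x)=(|k|-e^x)^+$ is bounded by $|k|$ and supported on $(-\infty,\log|k|)$, so for any $R_-<0$ one checks that $g_-(x)e^{-R_-x}\in L^1\cap L^2$ and an explicit calculation produces $\hat{g}_-$ with $O(u^{-2})$ decay, so Lemma~\ref{lem:dampedFourier} applies and delivers the integral term.

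Finally, the closed form for $\Prob_y(Y_T>0)=\Prob_{\sgn(y)}(J_{T-t}=+)$ follows from diagonalising the $2\times 2$ generator of the $\{+,-\}$-valued chain: its eigenvalues are $0$ and $-(q_++q_-)$, its stationary mass at $+$ equals $q_-/(q_++q_-)$, and the transient correction carries sign $\sgn(y)$ and magnitude $q_{\sgn(y)}/(q_++q_-)$, which reproduces the displayed expression. The main obstacle I anticipate is organising the $k<0$ case cleanly, in particular recognising that the linear part of $g_+$ must be peeled off before invoking Fourier inversion, and verifying that a single choice of $R_-<0$ simultaneously satisfies all three conditions of Lemma~\ref{lem:dampedFourier} for the put-type payoff $g_-$.
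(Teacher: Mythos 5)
Your proposal is correct and follows essentially the same route as the paper: for $k>0$ only the $g_+$ term survives and Corollary~\ref{cor:FTeuropeanOption} applies directly, while for $k<0$ the $J_T=+$ contribution is evaluated in closed form via the matrix exponent at $1$ (precisely because no real damping $R$ makes $e^{-Rx}g_+(x)$ integrable) and the $J_T=-$ contribution is recovered from Lemma~\ref{lem:dampedFourier} with $R_-<0$, with $\Prob_y(Y_T>0)$ obtained from the standard two-state chain computation. The only addition you make beyond the paper's argument is the explicit verification of the hypotheses of Lemma~\ref{lem:dampedFourier} for $g_\pm$, which the paper leaves implicit.
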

                \begin{proof}
                    We have already seen for $k\geq0$ that $g_-(x)=0$ and hence $\hat{g}_-(x)=0$ so the first result is obtained from Corollary \ref{cor:FTeuropeanOption} directly.
                    
                    If $k<0$ then $e^{-Rx}g_+(x)\notin L^1(\R)\cup L^2(\R)$ for any $R\in\R$ so Corollary \ref{cor:FTeuropeanOption} can't be applied directly. However, by Lemma \ref{lem:dampedFourier}
                    \begin{align*}
                        e^{rT}C_H(k,y,0,T)
                        &= \E_y\left[ g_+(\xi_T)\:; \: Y_T>0 \right] + \E_y\left[ g_-(\xi_T)\:;\: Y_T<0 \right]\\
                        &= \E_y\left[ Y_T \:;\: Y_T>0 \right] - k\Prob(Y_T>0) + e^{R\log|y|}c_{\sgn(y),-}^R(T,\log|y|)
                    \end{align*}
                    and then, by the definition of the matrix exponent, $\E_y\left[ Y_T \:;\: Y_T>0 \right]=|y|\left(e^{TF(1)}\right)_{\sgn(y),+}$, and standard Markov chain results give $\Prob(Y_T>0)$ so we can apply the inverse Fourier transform to $c^R_{\sgn(y),-}(T,\log|y|)$ to obtain the result.    
                \end{proof}    
                \fi

\fi

\subsection{Mellin Transform of \texorpdfstring{$C_H(\cdot,\cdot,\cdot)$}{C_H(.,.,.)}}
    Similarly to the Fourier transform methods used for \levy process (for example, see \cite{carr1999option} and \cite{fadugba2016valuation}), we consider a Mellin transform approach to pricing European options under an exponential MAP model. We consider both calls and puts, as well as general payoff functions $H:\R^+\rightarrow\R$.
    Following the standard convention, the Mellin transform of a function $f:\R^+\rightarrow\R$ is given by
    \begin{align*}
        \mathcal{M}\{f\}(u) \coloneqq \int_0^\infty x^{u-1}f(x)\,dx.
    \end{align*}

    We first consider the case of call and put options, which have pay off functions $H_+(x)\coloneqq(x-k)^+$ and $H_-(x)\coloneqq(k-x)^+$, respectivley. Let
   $C_\alpha(k) \coloneqq C_{H_+}(\alpha,1,T)$
    and
   $ P_\alpha(k) \coloneqq C_{H_-}(\alpha,1,T)$
    denote the prices of European call and put option, respectivley, with strike $k>0$ and time to maturity $T>0$, when the current state of the MAP is $(\alpha,0)\in E\times\R$.
    
    \begin{prop}
        \label{prop:putcallMellinTransform}
        If $\E[Y_T^{1+s}]<\infty$ for some $s>0$, then, for all $\alpha\in E$ and $u\in\C$ with $\Re(u)\in(0,s)$,
        \begin{align}
          \int_0^\infty k^{u-1}C_\alpha(k)\,dk 
          = \frac{e^{-rT}}{u(u+1)}\sum_{\beta\in E} \left(e^{TF(u+1)}\right)_{\alpha,\beta},
        \end{align}
        and, if $\E[Y_T^{-s}]<\infty$ for some $s>0$, then, for all $\alpha\in E$ and $\Re(u)\in(-s,-1)$,
        \begin{align}
          \int_0^\infty k^{u-1}P_\alpha(k)\,dk 
          = \frac{e^{-rT}}{u(u+1)}\sum_{\beta\in E} \left(e^{TF(u+1)} \right)_{\alpha,\beta}.
        \end{align}
    \end{prop}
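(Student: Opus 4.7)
\textit{Proof plan.} The plan is to reduce both integrals, by Fubini--Tonelli, to a single moment of $Y_T$ that is directly expressible through the matrix exponent $F$. Writing $C_\alpha(k) = e^{-rT}\E_\alpha[(Y_T-k)^+]$ (using $Y_0=1$ since $\xi_0=0$), I would first record the pointwise Mellin transform of the call payoff: for each realisation of $Y_T>0$ and every $u\in\C$ with $\Re(u)>0$,
\begin{align*}
  \int_0^\infty k^{u-1}(Y_T-k)^+\,dk = \int_0^{Y_T} k^{u-1}(Y_T-k)\,dk = \frac{Y_T^{u+1}}{u(u+1)}.
\end{align*}
For the put, the analogous identity, valid for $\Re(u)<-1$, is
\begin{align*}
  \int_0^\infty k^{u-1}(k-Y_T)^+\,dk = \int_{Y_T}^\infty k^{u-1}(k-Y_T)\,dk = \frac{Y_T^{u+1}}{u(u+1)},
\end{align*}
where the restriction $\Re(u)<-1$ ensures convergence at $k=\infty$.

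Next I would justify swapping integral and expectation. In the call case the modulus of the integrand is bounded by $k^{\Re(u)-1}(Y_T-k)^+$, whose integral in $k$ equals $Y_T^{1+\Re(u)}/(\Re(u)(\Re(u)+1))$, so Fubini applies as soon as $\E_\alpha[Y_T^{1+\Re(u)}]<\infty$. Since $1+\Re(u)\in(1,1+s)$, H\"older's inequality applied between the trivial moment $\E[Y_T^0]=1$ and the hypothesised $\E[Y_T^{1+s}]<\infty$ delivers this finiteness. The put case is symmetric: the relevant exponent $1+\Re(u)$ now lies in $(1-s,0)$, and interpolating between $\E[Y_T^0]=1$ and $\E[Y_T^{-s}]<\infty$ again gives absolute integrability.

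With Fubini in hand, both Mellin transforms collapse to $e^{-rT}\E_\alpha[Y_T^{u+1}]/(u(u+1))$. Decomposing over the terminal state of $J$ and invoking the defining identity of the matrix exponent,
\begin{align*}
  \E_\alpha\bigl[Y_T^{u+1}\bigr] = \sum_{\beta\in E}\E_\alpha\bigl[e^{(u+1)\xi_T};\,J_T=\beta\bigr] = \sum_{\beta\in E}\bigl(e^{TF(u+1)}\bigr)_{\alpha,\beta},
\end{align*}
which yields both formulae. The only delicate point is that $F$ must exist at the complex argument $u+1$; this is ensured by Theorem~\ref{thm:intLampKiu}, which upgrades the real-exponent moment assumption to existence of $F(z)$ throughout the corresponding vertical strip, since the matrix entries $e^{z\xi^{(\alpha)}_1}$ and $e^{zU_{\alpha,\beta}}$ are analytic in $z$ wherever their real-exponent analogues are finite. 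I do not anticipate any substantive obstacle beyond bookkeeping the Fubini hypothesis on the two separate strips.
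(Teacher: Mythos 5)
Your proposal is correct and follows essentially the same route as the paper: compute the inner Mellin transform of the payoff to get $Y_T^{u+1}/(u(u+1))$, swap the order of integration by Fubini, and identify the resulting moment with the row sum of $e^{TF(u+1)}$. The only differences are cosmetic — you work directly with the expectation rather than the density $p_{\alpha,\beta}$ of $Y_T\mathbbm{1}_{\{J_T=\beta\}}$, and you spell out the Fubini justification (via moment interpolation) more explicitly than the paper does.
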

    \begin{proof}
        For each $\alpha,\beta\in E$, let $p_{\alpha,\beta}$ denote the density of $Y_T\mathbbm{1}_{\{J_T=\beta\}}$ under the measure $\Prob_\alpha$. Then, for $u\in\C$ with $\Re(u)\in(0,s)$, we have
        \begin{align*}
            e^{rT}\int_0^\infty k^{u-1}C_\alpha(k) dk
            &= \int_0^\infty k^{u-1}\sum_{\beta\in E}\E_\alpha\left[ (Y_T-k)^+; J_T=\beta \right] dk\\
            &= \sum_{\beta\in E}\int_0^\infty k^{u-1} \int_k^\infty (y-k)p_{\alpha,
            \beta}(y)dy dk.
        \end{align*}
        Since $\Re(u)\in(0,s)$, using Fubini's theorem yields, 
        \begin{align*}
            \int_0^\infty k^{u-1} \int_k^\infty (y-k)p_{\alpha,\beta}(y)dy dk
            &=\int_0^\infty\int_0^y k^{u-1}(y-k)p_{\alpha,\beta}(y) dk dy
            = \int_0^\infty \frac{y^{u+1}p_{\alpha,\beta}(y)}{u(u+1)} dy,
        \end{align*}
        from which, using the definition of $e^{TF(z)}$, we obtain
        \begin{align*}
            e^{rT}\int_0^\infty k^{u-1}C_\alpha(k) dk
            =\sum_{\beta\in E}\int_0^\infty \frac{y^{u+1}p_{\alpha,\beta}(y)}{u(u+1)} dy
            = \sum_{\beta\in E}\frac{\left(e^{TF(u+1)}\right)_{\alpha,\beta}}{u(u+1)}.
        \end{align*}
        A similar calculation, with $\Re(u)<(-s,-1)$, yields the result for the put option.
        
    \end{proof}

    \if{}
                We can also consider the Mellin transform of a European option with respect to the current asset price. This is particularly useful when we want to consider a general payoff function $H:\R^+\rightarrow\R$.
                \begin{lem}
                    \label{lem:spaceMellinAsianOpt}
                    Let $C_i(y)$ denote the price of a European option with maturity $T$ and fixed strike $k$ when the current asset price is $Y_0=y$ and the market state is $J_0=j\in E$. Also, let $p_T(s)$ denote the density of $Y_T/y$ where $Y_T$ is the price of the underlying at time $T$. Then,
                    \begin{align}
                        \label{eqn:mellinOptionSpot}
                        C_i(y) 
                        = \mathcal{M}^{-1}_u\left\{\frac{k^{u+1}\mathcal{M}_s\{p_T\}(1-u)}{u(u+1)}\right\}(y)
                        = \sum_{j\in E}\mathcal{M}^{-1}_u\left\{ \frac{k^{u+1}}{u(u+1)}\left( e^TF(-u) \right)_{i,j} \right\},
                    \end{align}
                    where the inverse Mellin transform is calculated along the line $a+i\R$ with $a<-1$.
                \end{lem}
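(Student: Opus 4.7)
The plan is a direct computation of the Mellin transform: expand the option price via conditioning on $J_T$, interchange the $k$- and $y$-integrals by Fubini, compute the resulting elementary inner integral, and recognise the outer one as an entry of $e^{TF(u+1)}$.

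For the call I would first write $e^{rT}C_\alpha(k) = \sum_{\beta\in E}\int_k^\infty (y-k)\,p_{\alpha,\beta}(y)\,dy$, where $p_{\alpha,\beta}$ denotes the sub-density of $Y_T$ on $\{J_T=\beta\}$ under $\Prob_\alpha$. Multiplying by $k^{u-1}$ and integrating over $k\in(0,\infty)$, Fubini's theorem rewrites the double integral as $\sum_\beta\int_0^\infty p_{\alpha,\beta}(y)\int_0^y k^{u-1}(y-k)\,dk\,dy$. The inner integral evaluates to $y^{u+1}/(u(u+1))$ whenever $\Re(u)>0$, and the outer integral gives
\[
    \sum_\beta\int_0^\infty y^{u+1}p_{\alpha,\beta}(y)\,dy = \sum_\beta \E_\alpha\bigl[Y_T^{u+1};J_T=\beta\bigr] = \sum_\beta \bigl(e^{TF(u+1)}\bigr)_{\alpha,\beta}
\]
by the defining property of the matrix exponent, yielding the claimed identity.

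The only nontrivial step is justifying the exchange of integrals, and this is where the moment hypothesis enters. By Tonelli applied to the modulus of the integrand, it suffices to check that $\E[Y_T^{\Re(u+1)}]<\infty$. In the call case $\Re(u+1)\in(1,1+s)$, so the elementary bound $Y_T^{\Re(u+1)} \leq 1 + Y_T^{1+s}$ combined with the hypothesis $\E[Y_T^{1+s}]<\infty$ does the job.

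For the put the analogous decomposition is $e^{rT}P_\alpha(k) = \sum_\beta\int_0^k (k-y)\,p_{\alpha,\beta}(y)\,dy$, so after swapping the order of integration the inner integral becomes $\int_y^\infty k^{u-1}(k-y)\,dk$. This converges precisely when $\Re(u)<-1$, and splitting it as $\int_y^\infty k^u\,dk - y\int_y^\infty k^{u-1}\,dk$ gives the same expression $y^{u+1}/(u(u+1))$. Fubini is justified identically, now with $\Re(u+1)\in(1-s,0)\subset(-s,0)$, using the bound $Y_T^{\Re(u+1)} \leq 1 + Y_T^{-s}$ together with $\E[Y_T^{-s}]<\infty$. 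The identification of the outer $y$-integral with $(e^{TF(u+1)})_{\alpha,\beta}$ is unchanged, so the call and put calculations fuse into the single formula, with the strip of validity of the Mellin variable $u$ determined by which moment hypothesis is in force. The main thing to watch out for is simply matching the correct strip to each case; no deeper obstruction arises.
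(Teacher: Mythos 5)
Your computation is correct, but it proves the wrong statement: you have taken the Mellin transform in the \emph{strike} $k$ with the spot fixed, which is Proposition \ref{prop:putcallMellinTransform} of the paper, not Lemma \ref{lem:spaceMellinAsianOpt}. The lemma asks for the Mellin transform in the \emph{spot price} $y$ with the strike $k$ held fixed as a parameter --- the left-hand side is $C_i(y)$ and the inverse transform is evaluated at $y$. The two transforms are genuinely different objects: yours produces $\frac{1}{u(u+1)}\sum_\beta\bigl(e^{TF(u+1)}\bigr)_{\alpha,\beta}$ on the strip $\Re(u)\in(0,s)$ with no $k$-dependence (the strike has been integrated out), whereas the lemma's formula carries the factor $k^{u+1}$, involves $F(-u)$ rather than $F(u+1)$, and lives on the half-plane $\Re(u)<-1$. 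Your closing remark that the strip is ``determined by which moment hypothesis is in force'' conflates the call/put distinction in the strike-transform with the entirely separate issue of which variable is being transformed.

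The paper's argument for the lemma starts instead from $C_i(y)=\int_0^\infty (ys-k)^+p_T(s)\,ds$, multiplies by $y^{u-1}$, and integrates over $y$; after Fubini the inner integral is $\int_{k/s}^\infty y^{u-1}(ys-k)\,dy$, which converges only for $\Re(u)<-1$ and evaluates to $k^{u+1}s^{-u}/(u(u+1))$, whence the outer integral gives $k^{u+1}\mathcal{M}_s\{p_T\}(1-u)/(u(u+1))$ and the identification $\mathcal{M}_s\{p_T\}(1-u)=\E[(Y_T/y)^{-u}]=\sum_j\bigl(e^{TF(-u)}\bigr)_{i,j}$. Your machinery (Fubini/Tonelli justified by a moment bound, recognition of the matrix exponent) transfers, but you would need to redo the calculation in the $y$-variable --- or invoke the homogeneity $C(y;k)=y\,C(1;k/y)$ to convert one transform into the other --- and in either case the relevant moment condition and strip of integration change accordingly.
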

                \begin{proof}
                    Suppose that $\Re(u)<-1$. Then,
                    \begin{align*}
                        \mathcal{M}_y\left\{C\right\}(u)
                        &= \int_0^\infty y^{u-1}C(y) dy\\
                        &=\int_0^\infty y^{u-1} \int_0^\infty (ys-k)^+p_T(s) ds dy.
                    \end{align*}
                    By applying Fubini's theorem we have
                    \begin{align*}
                        M_y\left\{C\right\}(u)
                        &= \int_0^\infty \int_{k/y}^\infty y^{u-1}(ys-k)p_T(s) ds dy\\
                        &= \int_0^\infty \int_{k/s}^\infty y^{u-1}(ys-k)p_T(s)dy ds\\
                        &= \int_0^\infty \left[ \frac{y^{u+1}s}{u+1} - \frac{ky^u}{u} \right]^{y=\infty}_{y=k/s} p_T(s) ds
                    \end{align*}
                    and since $\Re(u)<-1$ this can be evaluated to
                    \begin{align*}
                        M_y\left\{C\right\}(u)
                        &= \int_0^\infty \left( -\frac{k^{u+1}s^{-u}}{u+1} + \frac{k^{u+1}s^{-u}}{u}\right) p_T(s) ds\\
                        &= \int_0^\infty\frac{k^{u+1}s^{-u}}{u(u+1)}p_T(s)ds\\
                        &= \frac{k^{u+1}}{u(u+1)}\int_0^\infty s^{-u}p_T(s) ds\\
                        &= \frac{k^{u+1}}{u(u+1)}\mathcal{M}_s\{p_T\}(-u+1).
                    \end{align*}
                \end{proof}
    \fi
    
    By taking the Mellin transform with respect to the current asset price, we can consider a European option with a general payoff function $H:\R^+\rightarrow\R$.
    
    \begin{prop}
        \label{prop:mellinGeneralPayoff}
        Suppose that the price process of an asset is given by $\{Y_t:t\geq0\}$ with initial condition $(J_0,Y_0)=(\alpha,y)\in E\times\R^+$. Let $H:\R^+\rightarrow\R$ be the payoff function of a European option and suppose that there exists an $s\in\R$ such that $\E[Y_T^{-s}]<\infty$ and $\left\{\mathcal{M}H\right\}(s)$ exists.
        Then, the Mellin transform of $C_H(\alpha,y,T)$, with respect to $y$, is given by
        \begin{align*}
            \left\{\mathcal{M}C_H(\alpha,\cdot,T)\right\}(z)
            = e^{-rT}\left\{ \mathcal{M}H \right\}(z)\sum_{\beta\in E}\left( e^{TF(-z)} \right)_{\alpha,\beta},
        \end{align*}
        for all $\alpha\in E$ and $z\in\C$ such that $\Re(z)=s$.

    \end{prop}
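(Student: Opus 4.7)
The plan is to use the representation $C_H(\alpha,y,T)=e^{-rT}\E_{\alpha}[H(y\,e^{\hat\xi_T})]$, which follows from the Markov additive property and spatial homogeneity of the increments of $\xi$ (an initial log-price $\log y$ just shifts the argument). Substituting this into the Mellin transform and interchanging the order of integration should give a clean factorisation of $\mathcal{M}H$ against a moment of $Y_T$.

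First I would write
\begin{align*}
    \left\{\mathcal{M}C_H(\alpha,\cdot,T)\right\}(z)
    = e^{-rT}\int_0^\infty y^{z-1}\,\E_{\alpha}\!\left[H\!\left(y\,e^{\hat\xi_T}\right)\right]dy,
\end{align*}
then justify Fubini's theorem to bring the expectation outside. Inside the expectation, I would apply the change of variables $w=y\,e^{\hat\xi_T}$, so that $y^{z-1}dy = w^{z-1}e^{-z\hat\xi_T}dw$, yielding
\begin{align*}
    \int_0^\infty y^{z-1} H\!\left(y e^{\hat\xi_T}\right)dy
    = e^{-z\hat\xi_T}\left\{\mathcal{M}H\right\}(z).
\end{align*}
Taking the expectation and decomposing according to the terminal state $J_T=\beta$ then gives
$\E_\alpha[e^{-z\hat\xi_T}] = \sum_{\beta\in E}(e^{TF(-z)})_{\alpha,\beta}$,
which is exactly the claimed identity.

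The main obstacle is justifying the Fubini interchange and ensuring that $F(-z)$ is well defined for complex $z$ with $\Re(z)=s$. For the former, I would bound the absolute integrand by $y^{s-1}|H(ye^{\hat\xi_T})|$ and show
\begin{align*}
    \int_0^\infty\E_{\alpha}\!\left[y^{s-1}\bigl|H(ye^{\hat\xi_T})\bigr|\right]dy
    = \E_{\alpha}\!\left[e^{-s\hat\xi_T}\right]\int_0^\infty w^{s-1}|H(w)|\,dw,
\end{align*}
where the first factor is finite by Theorem \ref{thm:intLampKiu} (since $\E[Y_T^{-s}]<\infty$ is equivalent to $F(-s)$ existing, applied with $p=s$ to the MAP $(J,-\xi)$) and the second factor is finite by the assumed existence of $\{\mathcal{M}H\}(s)$, interpreted in the absolutely convergent sense. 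For the latter, existence of $F(-s)$ extends to a strip by analyticity of the moment generating function on the interior of its domain of definition, so $F(-z)$ makes sense for all $z$ with $\Re(z)=s$, and the same identity $\E_\alpha[e^{-z\hat\xi_T};J_T=\beta] = (e^{TF(-z)})_{\alpha,\beta}$ remains valid by analytic continuation of the matrix exponent characterisation of $F$ recalled in equation (\ref{eqn:exponentExplicit}).
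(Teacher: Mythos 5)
Your proposal is correct and follows essentially the same route as the paper: both express $C_H(\alpha,y,T)$ as $e^{-rT}$ times an expectation of $H$ evaluated at $y$ times the terminal value, apply Fubini, perform the multiplicative change of variables to factor out $\left\{\mathcal{M}H\right\}(z)$, and identify $\E_\alpha\left[e^{-z\hat\xi_T};J_T=\beta\right]$ with $\left(e^{TF(-z)}\right)_{\alpha,\beta}$. The only cosmetic difference is that the paper writes the expectation against the densities $p_{\alpha,\beta}$ while you keep probabilistic notation, and you are somewhat more explicit about the Fubini bound and the well-definedness of $F(-z)$ on the line $\Re(z)=s$.
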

    
    \begin{proof}
        Fix $\alpha\in E$. Considering the Mellin transform of $C_H(\alpha,y,T)$ with respect to $y$ and using the Markov additive property we have, for $z\in\C$ with $\Re(z)\in(s-\epsilon,s+\epsilon)$,
        \begin{align*}
           e^{rT} \left\{ \mathcal{M}C_H \right\}(z)
            &= \int_0^\infty x^{z-1}C_H(\alpha,x,T) dx
              = \int_0^\infty x^{z-1}\sum_{\beta\in E}\int_0^\infty H(xu)p_{\alpha,\beta}(u) du dx
              ,
        \end{align*}
        where $p_{\alpha,\beta}$ is the density of $\mathbbm{1}_{\{J_T=\beta\}}Y_T$ with respect to $\Prob_\alpha$.
        Then, using Fubini's theorem and the substitution $y=xu$, yields
        \begin{align*}
            e^{rT}\left\{ \mathcal{M}C_H \right\}(z)
            &= \sum_{\beta\in E}\int_0^\infty p_{\alpha,\beta}(u)\int_0^\infty x^{z-1}H(xu) dx du
            = \sum_{\beta\in E}\int_0^\infty p_{\alpha,\beta}(u)u^{-z}\int_0^\infty y^{z-1}H(y) dy du.
        \end{align*}
        By separating the integrals we obtain
        \begin{align*}
            e^{rT}\left\{ \mathcal{M}C_H \right\}(z)
            &= \sum_{\beta\in E} \int_0^\infty y^{z-1}H(y) dy \int_0^\infty p_{\alpha,\beta}(u)u^{-z} du 
            =\sum_{\beta\in E} \left\{\mathcal{M}H\right\}(z)\left(e^{TF(-z)} \right)_{\alpha,\beta},
        \end{align*}
        where, by the assumptions, both $\{\mathcal{M}H\}(z)$ and $e^{TF(-z)}$ exist for all $z\in\C$ such that $\Re(z)=s$.
      \end{proof}

      \begin{rem}
        By standard results for Mellin transforms, the condition that $\left\{\mathcal{M}H\right\}(s)$ exists for some $s\in\R$ is satisfied if
        \begin{align}
          \label{eqn:Hasymptotics}
          \frac{H(x)}{x^{-s+\epsilon}}\rightarrow 0 \quad\text{ as }\quad x\rightarrow 0^+
          \qquad\text{and}\qquad
          \frac{H(x)}{x^{-s-\epsilon}}\rightarrow 0 \quad\text{ as }\quad x\rightarrow +\infty.
        \end{align}
        In particular, (\ref{eqn:Hasymptotics}) is statisfied for any $H:\R^+\rightarrow\R$ that is bounded by a polynomial, including the payoff functions $(x-k)^+$ and $(k-x)^+$ of a call and put, respectivley.
      \end{rem}

\subsection{Partial Integro-Differential Equation Approach}
    Another method to obtain the prices of European options under an exponential MAP model is through a Partial Integro-Differential Equation (PIDE), similar to the one derived for exponential \levy models in \cite{tankov2003financial}. To use this method, some regularity results on the option prices are required.
    

    The following proposition gives regularity conditions of $\rho^{(t)}_\alpha$, the density of $\xi_t$ for $t>0$ when $J_0=\alpha\in E$, from which the regularity of $p^{(t)}_\alpha$, the density of $Y_t$ when $J_0=\alpha$, follows. 
        
        \begin{lem}
            \label{lem:matrixexpint}
            Suppose that for all $\alpha\in E$, either $\sigma_\alpha^2>0$ or the \levy measure, $\mu_\alpha$, satisfies
            \begin{align}
                \label{eqn:levySmoothnessCondition}
                \liminf_{r\downarrow0}r^{\gamma-2}\int_{[-r,r]} |x^2| \mu_\alpha(dx) > 0,
            \end{align}
            for some $\gamma\in(0,2)$. Then, $\rho^{(t)}_\beta\in C^\infty(\R)$ and $\frac{\partial^n }{\partial x^n}\rho^{(t)}_\beta(x)\in L^1(\R)$, for all $n\in\N_0$ and $\beta\in E$.
        \end{lem}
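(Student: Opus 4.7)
The plan is to pass to Fourier variables and establish that the characteristic function of $\xi_t$ under $\Prob_\beta$, together with all of its $u$-derivatives, decays faster than any polynomial. By Fourier inversion this places $\rho^{(t)}_\beta$ in the Schwartz class $\mathcal{S}(\R)$, which gives both the $C^\infty$ smoothness and the integrability of every derivative claimed.

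I would begin by recording that $\hat{\rho}^{(t)}_\beta(u) \coloneqq \E_\beta[e^{iu\xi_t}] = \sum_{\gamma\in E}(e^{tF(iu)})_{\beta,\gamma}$, and splitting $F(iu) = D(u) + R(u)$, where $D(u)$ is diagonal with entries $\psi_\alpha(iu) - q_\alpha$ and $R(u)$ has entries $q_{\alpha,\beta}G_{\alpha,\beta}(iu)$ off the diagonal. Since $|G_{\alpha,\beta}(iu)|\leq 1$, the perturbation $R(u)$ satisfies $\|R(u)\|\leq K$ uniformly in $u$, for some constant $K$ depending only on the transition rates; all spectral decay must therefore come from $D(u)$.

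Next, using either a nontrivial Gaussian component ($\sigma_\alpha^2>0$) or the Orey-type condition (\ref{eqn:levySmoothnessCondition}), the classical estimate for infinitely divisible characteristic functions (see \cite{sato1999levy}) yields $\Re\psi_\alpha(iu)\leq -c|u|^\gamma$ for some $c>0$ and all $|u|$ sufficiently large, uniformly in $\alpha\in E$. Hence $\|e^{sD(u)}\|\leq \exp(-cs|u|^\gamma + s\max_\alpha q_\alpha)$ for every $s\in[0,t]$. I would then apply the Dyson--Duhamel expansion
\begin{equation*}
  e^{tF(iu)} = \sum_{n=0}^\infty \int_{0<s_1<\dots<s_n<t} e^{(t-s_n)D(u)}R(u)e^{(s_n-s_{n-1})D(u)}\cdots R(u)e^{s_1D(u)}\,ds_1\cdots ds_n,
\end{equation*}
and observe that the total $D$-time within each term equals $t$, so the product of $D$-exponentials contributes the full factor $e^{-ct|u|^\gamma}$. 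Combined with $\|R(u)\|^n\leq K^n$ and the $t^n/n!$ simplex volume, this yields
\begin{equation*}
  \|e^{tF(iu)}\|\leq \exp(Ct - ct|u|^\gamma)
\end{equation*}
for a suitable constant $C$, so $\hat\rho^{(t)}_\beta$ is rapidly decreasing in $u$.

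To deal with derivatives, I would iterate Duhamel's formula to express $\partial_u^k e^{tF(iu)}$ as a finite sum of products involving $e^{sD(u)}$, $R(u)$, and derivatives $\partial_u^jF(iu)$, each of whose entries grows at most polynomially in $u$. The overall estimate becomes $|\partial_u^k\hat\rho^{(t)}_\beta(u)|\leq P_k(|u|)\exp(Ct-ct|u|^\gamma)$ for some polynomial $P_k$, so $\hat\rho^{(t)}_\beta\in\mathcal{S}(\R)$. By Fourier inversion $\rho^{(t)}_\beta\in\mathcal{S}(\R)$ as well, which in particular gives $\rho^{(t)}_\beta\in C^\infty(\R)$ and $\partial_x^n\rho^{(t)}_\beta\in L^1(\R)$ for every $n\in\N_0$. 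The main obstacle is the bound on $\|e^{tF(iu)}\|$: since $R(u)$ does not decay in $u$, one cannot treat it as a small perturbation of $D(u)$, and the proof must exploit the fact that every term in the Dyson expansion spends its total $D$-time equal to $t$ in order to recover the exponential decay $e^{-ct|u|^\gamma}$; once this estimate is in hand, the derivative control in the final step is routine.
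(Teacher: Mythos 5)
Your decay estimate for $\|e^{tF(iu)}\|$ is the same key step as the paper's, reached by a different route: the paper combines the bound $2\Re\psi_\beta(iu)\le-2c_1|u|^\gamma$ (from the Gaussian part or \cite[Proposition 28.3]{sato1999levy}) with $|G_{\alpha,\beta}(iu)|\le1$ and a diagonal-dominance argument on the entries of $F(iu)$ to conclude $\|e^{F(iu)}\|\le\exp(-2c_1|u|^\gamma)$ for large $|u|$, whereas you obtain the analogous bound $\exp(Ct-ct|u|^\gamma)$ from the Dyson--Duhamel expansion, exploiting that the total $D$-time in each term is $t$. Your version of this step is, if anything, more carefully justified than the paper's, and both deliver the rapid decay of $u\mapsto\sum_{\gamma\in E}\left(e^{tF(iu)}\right)_{\beta,\gamma}$.

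The gap is in your final step, which you describe as routine. To place $\hat\rho^{(t)}_\beta$ in $\mathcal{S}(\R)$ you differentiate $F(iu)$ in $u$ and assert that the entries of $\partial_u^jF(iu)$ grow at most polynomially. But $\partial_uG_{\alpha,\beta}(iu)=i\,\E\left[U_{\alpha,\beta}e^{iuU_{\alpha,\beta}}\right]$ exists only if $\E|U_{\alpha,\beta}|<\infty$, and likewise $\partial_u^j\psi_\alpha(iu)$ requires $\int_{|x|>1}|x|^j\mu_\alpha(dx)<\infty$; the lemma assumes neither (its only hypotheses are the Gaussian/Orey smoothness conditions, and the paper elsewhere treats moment conditions on $\mu_\alpha$ and $\nu_{\alpha,\beta}$ as a separate, nontrivial assumption, cf.\ Theorem \ref{thm:intLampKiu}). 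So the characteristic function need not be differentiable in $u$ at all, and the Schwartz-class conclusion is not available under the stated hypotheses. The paper avoids this by invoking \cite[pp 190, Chapter 5, Proposition 28.1]{sato1999levy}, which needs only $\int_\R|u|^n\|e^{tF(iu)}\|\,du<\infty$ --- the decay of the characteristic function itself, with no control of its $u$-derivatives --- to obtain $\rho^{(t)}_\beta\in C^\infty(\R)$ with derivatives vanishing at infinity. Your argument up to and including the bound $\|e^{tF(iu)}\|\le\exp(Ct-ct|u|^\gamma)$ already suffices for that; the $u$-derivative control should be dropped, and the remaining claim $\partial_x^n\rho^{(t)}_\beta\in L^1(\R)$ justified by an argument that does not differentiate $F(iu)$.
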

        
        \begin{proof}
          From \cite[pp 8, Proposition 2.5 (v)]{sato1999levy}, for $\beta\in E$ and $z\in\R$
            \begin{align*}
              |\exp\left(\psi_\beta(iz)\right)|^2
              &= \exp\left( \psi_\beta(iz) + \psi_\beta(-iz) \right)
              =\exp\left( 2\int_\R\left( \cos(zx) -1 \right ) \mu(dx) \right).
            \end{align*}
            Moreover, following \cite[pp 190, Chapter 5, Proposition 28.3]{sato1999levy},  under condition (\ref{eqn:levySmoothnessCondition}), we have that, for small enough $r>0$,
            \begin{align*}
              \int_\R\left(\cos(zx)-1\right)\mu_\beta(dx) \leq -c_1z^\gamma,
            \end{align*}
            for all $z\in\R$, for some constant $c_1>0$. Thus,
            \begin{align*}
              2\Re\left(\psi_\beta(iz)\right)
              = \log\left( \left| \exp \left( \psi_\beta(iz)  \right)\right|^2 \right)
              \leq -2c_1z^\gamma,
            \end{align*}
            and so, $|\psi_\beta(iz)|\rightarrow\infty$ as $z\rightarrow\pm\infty$.
            In the case that (\ref{eqn:levySmoothnessCondition}) does not hold, we have assumed $\sigma_\beta^2>0$ and hence we still have that $|\psi_\beta(iz)|\rightarrow\infty$ as $z\rightarrow\pm\infty$.
            However, for each $\alpha,\beta\in E$ and all $z\in\R$, we have
            $
            |G_{\alpha,\beta}(iz)|
            \leq \E\left[ \left| U_{\alpha,\beta}^{iz}  \right| \right]
            =1.
            $
            Hence,
            for all $\epsilon>0$, there exists $R>0$ such that for all $|z|>R$ with $z\in\R$, we have
            $\epsilon\left(F(iz)\right)_{\alpha,\alpha}\geq \sum_{\beta\in E\setminus\{\alpha\}} \left(F(iz)\right)_{\alpha,\beta} $
          and
          $\epsilon\left(F(iz)\right)_{\alpha,\alpha}\geq \sum_{\beta\in E\setminus\{\alpha\}} \left(F(iz)\right)_{\beta,\alpha}  $.
          Then, for sufficently small $\epsilon$, $\|e^{F(iz)}\|_{L^1} =\max_{\alpha\in E}| e^{\psi_\alpha(iz)}|\leq exp\left(-2c_1z^\gamma\right)$, where $\|\cdot\|$ is the matrix norm induced by the $L^1$ norm.

          \if{}
                        Hence, there exists matricies $\Lambda,P\in\C^{|E|\times|E|}$ such that $\Lambda$ is diagonal, $\|P\|=1$ and
                        \begin{align*}
                            F(iz) = P^{-1}(iz)\Lambda(iz)P(iz).
                        \end{align*}
                        Then,
                        \begin{align*}
                            e^{F(iz)} = P^{-1}(iz) e^{\Lambda(iz)} P(iz),
                        \end{align*}
                    \fi

            
          Under the assumptions of the lemma,
          we may apply \cite[pp 190, Chapter 5, Proposition 28.1]{sato1999levy} to obtain that $\rho^{(t)}_\beta\in\C^\infty$ and $\lim_{x\rightarrow\pm\infty}\frac{\partial^n}{\partial x^n}\rho^{(t)}_\beta(x)\rightarrow0$ for all $n\in\N_0$ and $\beta\in E$.
          Moreover, we see that $\frac{\partial^{n}}{\partial x^{n}}\rho^{(t)}_\beta(x)\in L^1(\R)$.

            


            
        \end{proof}
    
        From this lemma we can now deduce smoothness with respect to $y$ of the option price $C_H(\gamma,y,T)$.

                \if{}
                    Since if $f^{(n-1)}\in L^1(\R)$ then $\lim_{x\rightarrow\pm\infty}f^{(n-1)}(x)=0$. So, by the FTC,
                    \begin{align*}
                        \int_\R f^{(n)}(x) dx = \lim_{x\rightarrow\infty}f^{(n-1)}(x) - \lim_{x\rightarrow-\infty}f^{(n-1)}(x) = 0
                    \end{align*}
                    hence $f^{(n)}\in L^1(\R)$.
                \fi
            
            
            
        
        \begin{cor}
            \label{cor:optionDiffableSpace}
            Suppose there exists an $a\in\R$, such that $x^{-(a+1)}H(x)\in L^1(\R^+)$ and $\E[Y_T^{a-1}]<\infty$. Then, under the conditions of Lemma \ref{lem:matrixexpint}, the  price of the option, $C_H(\alpha,y,T)$, is infinitely continuously differentiable as a function of $y\in\R^+$, for all $T>0$ and $\alpha\in E$.
        \end{cor}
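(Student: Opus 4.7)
The plan is to establish smoothness of $C_H(\alpha,y,T)$ in $y$ by realising it as a convolution of $H$ (after a logarithmic change of variables) against the joint sub-density of $(\xi_T,J_T)$, whose smoothness is supplied by Lemma~\ref{lem:matrixexpint}. Concretely, let $\rho^{(T)}_{\alpha,\beta}(x)$ denote the sub-density of $\xi_T$ on $\{J_T=\beta\}$ starting from $J_0=\alpha$, so that by the Markov additive property
\begin{equation*}
  C_H(\alpha, y, T) = e^{-rT}\sum_{\beta \in E}\int_\R H(ye^x)\,\rho^{(T)}_{\alpha,\beta}(x)\,dx.
\end{equation*}
Because the Fourier transform in $x$ of $\rho^{(T)}_{\alpha,\beta}$ is the $(\alpha,\beta)$-entry of $e^{TF(iz)}$, the argument of Lemma~\ref{lem:matrixexpint} applies essentially verbatim to give $\rho^{(T)}_{\alpha,\beta}\in C^\infty(\R)$ with every derivative in $L^1(\R)$, and hence bounded with vanishing limits at $\pm\infty$.

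Next I would pass to $\tilde{y}=\log y$ and set $\hat{H}(w):=H(e^w)$; after the substitution $w=x+\tilde{y}$ the price becomes a convolution,
\begin{equation*}
  \tilde{C}(\alpha,\tilde{y},T):=C_H(\alpha,e^{\tilde{y}},T)=e^{-rT}\sum_\beta \int_\R \hat{H}(w)\,\rho^{(T)}_{\alpha,\beta}(w-\tilde{y})\,dw,
\end{equation*}
and it suffices to show $\tilde{C}(\alpha,\cdot,T)\in C^\infty(\R)$, since $y\mapsto\log y$ is a smooth diffeomorphism on $\R^+$. Formally differentiating under the integral yields
\begin{equation*}
\partial^n_{\tilde{y}}\tilde{C}(\alpha,\tilde{y},T) = (-1)^n e^{-rT}\sum_\beta \int_\R \hat{H}(w)\,(\rho^{(T)}_{\alpha,\beta})^{(n)}(w-\tilde{y})\,dw,
\end{equation*}
so the remaining task is to justify this differentiation via dominated convergence, uniformly for $\tilde{y}$ in compact subsets of $\R$.

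The third step is to use the hypotheses to build an integrable majorant. The condition $x^{-(a+1)}H(x)\in L^1(\R^+)$ translates, via $x=e^w$, to $e^{-a(\cdot)}\hat{H}(\cdot)\in L^1(\R)$, controlling the growth of $\hat{H}$ in an integrated sense. The assumption $\E[Y_T^{a-1}]<\infty$ permits an Esscher change of measure in which the tilted density $e^{(a-1)x}\rho^{(T)}_{\alpha,\beta}(x)/\E[Y_T^{a-1}]$ is a (sub-)probability density whose matrix exponent still satisfies the hypotheses of Lemma~\ref{lem:matrixexpint}; applying the lemma in the tilted picture and transferring back gives a pointwise bound of the form $|(\rho^{(T)}_{\alpha,\beta})^{(n)}(v)|\leq C_n e^{-(a-1)v}$ on $\R$. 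A splitting of the convolution integral combined with a H\"older-type estimate then pairs the weighted $L^1$ bound on $\hat{H}$ against this weighted $L^\infty$ bound on $(\rho^{(T)}_{\alpha,\beta})^{(n)}$, delivering the required uniform majorant; smoothness of $\tilde{C}$, and therefore of $C_H$, then follows by the chain rule.

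\textbf{The main obstacle} will be the weighted bookkeeping in this last step: there is a one-exponential gap between the $e^{aw}$-growth of $\hat{H}$ and the $e^{-(a-1)v}$-decay produced by the Esscher-shifted application of Lemma~\ref{lem:matrixexpint}, so one must combine the integrated tail decay of $\hat{H}$ with the pointwise boundedness of $(\rho^{(T)}_{\alpha,\beta})^{(n)}$ (itself a consequence of the $L^1$-derivative statement in Lemma~\ref{lem:matrixexpint}, via the standard fact that a continuous function whose derivative lies in $L^1$ is bounded) in order to close the domination argument.
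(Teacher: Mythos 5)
Your overall strategy is the paper's: write $C_H(\alpha,\cdot,T)$ as a convolution of $H$ against the transition density and push all $y$-derivatives onto the density, whose smoothness and $L^1$ derivative bounds come from Lemma \ref{lem:matrixexpint}. The paper does this with the multiplicative convolution on $\R^+$, factoring the integrand as $\left\{x^{-a}H(x)\right\}$ against $\left\{p_\alpha^{(T)}(x^{-1})x^{-(a+1)}\right\}$, so that the two hypotheses are consumed verbatim: $x^{-(a+1)}H(x)\in L^1(\R^+)$ makes the first factor integrable, and $\int_0^\infty p_\alpha^{(T)}(x^{-1})x^{-(a+1)}\,dx=\E[Y_T^{a-1}]<\infty$ makes the second factor integrable, with the derivative $L^1$ bounds then supplied by Lemma \ref{lem:matrixexpint} and the fundamental theorem of calculus. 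No pointwise exponential bound on the density is ever invoked.

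The gap you flag in your own third step is real and is not closed by the combination you propose. In your additive picture the domination requires $\int_\R |\hat H(w)|\,|(\rho^{(T)}_{\alpha,\beta})^{(n)}(w-\tilde y)|\,dw<\infty$ with the only control on $\hat H$ being $F\coloneqq e^{-a\cdot}\hat H\in L^1(\R)$; since $F$ is merely integrable, the H\"older pairing you describe forces the companion factor into $L^\infty$, i.e.\ you need $|(\rho^{(T)}_{\alpha,\beta})^{(n)}(v)|\leq C_n e^{-av}$. The Esscher tilt can only be performed at exponent $a-1$ (that is all $\E[Y_T^{a-1}]<\infty$ licenses; the hypotheses give no $a$-th moment), so it yields $e^{-(a-1)v}$, and the leftover factor $e^{w}$ cannot be absorbed into $F$: $e^{-aw}\hat H\in L^1$ does not imply $e^{-(a-1)w}\hat H\in L^1$ — take $H(x)=x^a(\log x)^{-2}$ for large $x$, which satisfies $x^{-(a+1)}H(x)\in L^1$ near infinity but has $\int^\infty x^{-a}H(x)\,dx=\infty$. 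Pointwise boundedness of $(\rho^{(T)}_{\alpha,\beta})^{(n)}$ only helps on the half-line where $e^{aw}$ is bounded, so splitting the integral does not rescue the argument either. To repair your route you would have to mirror the paper's asymmetric distribution of the weight: convolve $e^{-aw}\hat H(w)\in L^1(\R)$ against $v\mapsto e^{av}\rho^{(T)}_{\alpha,\beta}(v)$ (whose relevant integrability is exactly the $(a-1)$- or $a$-moment condition depending on the reference measure you carry along from the change of variables), and control the derivatives of that weighted density directly from Lemma \ref{lem:matrixexpint}, rather than through a tilted pointwise bound that is one exponential order too weak.
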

        \begin{proof}
            For all $y\in\R^+$, $T\in\R^+$ and $\alpha\in E$, we have
            \begin{align*}
                C_H(\alpha,y,T) &= e^{-rT}\E_{(\alpha,y)}\left[ H(Y_{T}) \right]
                    =e^{-rT}\int_0^\infty H(x)p_\alpha^{(T)}\left(\frac{x}{y}\right)dx.
            \end{align*}
            
            Then, writing this in convolution form gives
            \begin{align*}
                C_H(\alpha,y,T)
                &=e^{-rT}y^{a+1}\int_0^\infty \left(x^{-a}H(x) p_\alpha^{(T)}\left(\frac{x}{y}\right)\frac{x^{a+1}}{y^{a+1}}\frac{1}{x} \right) dx\\
                &=e^{-rT}y^{a+1}\left(\left\{ x^{-a}H(x) \right\} * \left\{ p_\alpha^{(T)}(x^{-1})x^{-(a+1)} \right\}  \right)(y),
            \end{align*}
            where $*$ denotes the multiplicative convolution.
            Since we have assumed $x^{-(a+1)}H(x)\in L^1(\R^+)$, 
            by differentiation of a convolution, $C_H$ is $n$-times differentiable with respect to $y$, whenever $p_\alpha^{(T)}(x^{-1})x^{-(a+1)}$ is $n$-times differentiable with respect to $x$ on $\R^+$ and $\frac{\partial^m}{\partial x^m}\left( p_\alpha^{(T)}(x^{-1})x^{-(a+1)}\right)\in L^1(\R^+)$, for $m=1,\dots,n$.
            We now proceed with the proof of this condition.
            
            By the assumptions of the corollary and a change of variables, we have that
            \begin{align*}
                \int_0^\infty p_\alpha^{(T)}(x^{-1})x^{-(a+1)} dx
                = \int_0^\infty p_\alpha^{(T)}(z)z^{a-1} dz
                = \E\left[ Y_{T}^{a-1} \right]
                <\infty,
            \end{align*}
            hence, $p_\alpha^{(T)}(x^{-1})x^{-(a+1)}\in L^1(\R^+)$.
            Then, since we have sufficient differentability by Lemma \ref{lem:matrixexpint}, we can repeatedly apply the fundamental theorem of calculus to obtain\\ $\frac{\partial^n}{\partial x^n} \left(p_\alpha^{(T-t)}(x^{-1})x^{-(a+1)}\right)\in L^1(\R^+)$ for all $n\in\N$.
            
                \if{}
                    Since if $f^{(n-1)}\in L^1(\R)$ then $\lim_{x\rightarrow\pm\infty}f^{(n-1)}(x)=0$. So, by the FTC,
                    \begin{align*}
                        \int_\R f^{(n)}(x) dx = \lim_{x\rightarrow\infty}f^{(n-1)}(x) - \lim_{x\rightarrow-\infty}f^{(n-1)}(x) = 0
                    \end{align*}
                    hence $f^{(n)}\in L^1(\R)$.
                \fi
            
            
            
        \end{proof}
        
        
        \begin{lem}
            \label{lem:optionTimeSmoothness}   
            Suppose that there exists $c\in\R$ and $\epsilon>0$, such that $\E\left[ Y_{T}^{-c} \right]<\infty$ and $\{\mathcal{M}H\}(c)$ exists.
            Moreover, suppose that, for each $\beta\in E$, either $\sigma_\beta^2>0$, or the \levy measure, $\mu_\beta$, satisfies
            \begin{align*}
                \liminf_{r\downarrow0}r^{\epsilon-2}\int_{[-r,r]} |x^2| \mu_\beta(dx) > 0,
            \end{align*}
            for some $\epsilon\in(0,2)$.
            Then, the time derivative of $C_H$ exists and is continuous for $T>0$.
        \end{lem}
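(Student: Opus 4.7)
The plan is to represent $C_H(\alpha, y, T)$ by inverting the Mellin transform identity of Proposition \ref{prop:mellinGeneralPayoff} and then differentiate in $T$ under the integral. Applying Proposition \ref{prop:mellinGeneralPayoff} with $s = c$ gives, on the line $\Re(z) = c$,
\begin{align*}
\{\mathcal{M}C_H(\alpha, \cdot, T)\}(c + iu) = e^{-rT}\{\mathcal{M}H\}(c + iu)\sum_{\beta \in E}\left(e^{TF(-c - iu)}\right)_{\alpha, \beta},
\end{align*}
and Mellin inversion then yields the Bromwich representation
\begin{align*}
C_H(\alpha, y, T) = \frac{e^{-rT}}{2\pi i}\int_{c - i\infty}^{c + i\infty} y^{-z}\{\mathcal{M}H\}(z)\sum_{\beta \in E}\left(e^{TF(-z)}\right)_{\alpha, \beta}\, dz.
\end{align*}
Formally differentiating in $T$ produces
\begin{align*}
\partial_T C_H(\alpha, y, T) = -r\, C_H(\alpha, y, T) + \frac{e^{-rT}}{2\pi i}\int_{c - i\infty}^{c + i\infty} y^{-z}\{\mathcal{M}H\}(z)\sum_{\beta \in E}\left(F(-z)e^{TF(-z)}\right)_{\alpha, \beta}\, dz,
\end{align*}
so the real work is to justify differentiation under the contour integral and to deduce continuity of $\partial_T C_H$ in $T$ from the same expression.

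The central estimate needed is an exponential decay bound of the form $\|e^{TF(-c - iu)}\| \leq C\exp(-c_1 T|u|^\epsilon)$ for $|u|$ large, uniformly for $T$ in any compact subset of $(0, \infty)$. Since $\E[Y_T^{-c}] < \infty$, Theorem \ref{thm:intLampKiu} guarantees that $F(-c)$ exists, so for each $\alpha \in E$ one may introduce the Esscher-tilted \levy process $\tilde{\xi}^{(\alpha)}$ with characteristic exponent $\tilde{\psi}_\alpha(iu) = \psi_\alpha(-c - iu) - \psi_\alpha(-c)$. This tilt preserves the Gaussian coefficient $\sigma_\alpha^2$ and multiplies the \levy measure by $e^{-cx}$; in particular, since $e^{-cx}$ is bounded above and below by positive constants on any fixed neighbourhood of the origin, the condition $\liminf_{r \downarrow 0} r^{\epsilon - 2}\int_{[-r,r]} x^2\mu_\alpha(dx) > 0$ passes to the tilted measure. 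Repeating the computation of Lemma \ref{lem:matrixexpint} for $\tilde{\psi}_\alpha$ therefore yields $|e^{\psi_\alpha(-c - iu)}| = e^{\Re\psi_\alpha(-c)}|e^{\tilde{\psi}_\alpha(iu)}| \leq C_\alpha \exp(-c_1|u|^\epsilon)$, while each off-diagonal factor $G_{\alpha, \beta}(-c - iu)$ remains bounded as the characteristic function of the tilted jump distribution. The diagonal-versus-off-diagonal comparison used in the proof of Lemma \ref{lem:matrixexpint} then transfers the decay from the diagonal entries of $F(-c - iu)$ to the full matrix exponential $e^{TF(-c - iu)}$.

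With this bound in hand, the rest is routine. On any interval $[T_0, T_1] \subset (0, \infty)$, the integrand of the formal derivative is dominated by $C|\{\mathcal{M}H\}(c + iu)|(1 + u^2)\exp(-c_1 T_0|u|^\epsilon)$, where the factor $(1 + u^2)$ controls the at most quadratic growth of $F(-c - iu)$ coming from the \levy--Khintchine form of $\psi_\alpha$, and $|\{\mathcal{M}H\}(c + iu)|$ is bounded by $\int_0^\infty x^{c - 1}|H(x)|\, dx$ along the line $\Re(z) = c$. This majorant is integrable in $u$, so dominated convergence legitimises both the interchange of $\partial_T$ with the contour integral and the continuity of the resulting function in $T$. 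The main obstacle is the decay estimate on the shifted line $\Re(z) = c \neq 0$: Lemma \ref{lem:matrixexpint} only treats purely imaginary arguments, so the Esscher-tilt step above is what carries the argument across. Everything else reduces to a standard dominated-convergence argument.
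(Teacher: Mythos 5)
Your proof is correct and follows the same overall skeleton as the paper's: both represent $C_H$ by Mellin inversion of Proposition \ref{prop:mellinGeneralPayoff}, differentiate in $T$ under the contour integral, and close with a dominated-convergence argument on compact $T$-intervals to get existence and continuity of $\partial_T C_H$. Where you genuinely diverge is in how the crucial decay of $e^{TF(-c-iu)}$ along the shifted line $\Re(z)=c$ is obtained. The paper works on the density side: it shows $\frac{\partial^n}{\partial x^n}\bigl(e^{-cx}\rho_\beta^{(T)}(x)\bigr)\in L^1(\R)$ for every $n$ (by induction from the smoothness in Lemma \ref{lem:matrixexpint} and the vanishing of the tilted density at $\pm\infty$) and then reads off $\E\bigl[e^{(-c+iu)\xi_T}\bigr]=o(u^{-n})$ from the Fourier transform of those derivatives. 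You work on the exponent side: the Esscher tilt by $e^{-cx}$ preserves $\sigma_\alpha^2$ and multiplies $\mu_\alpha$ by a factor bounded above and below near the origin, so condition (\ref{eqn:levySmoothnessCondition}) survives the tilt and the estimate of Lemma \ref{lem:matrixexpint} applies verbatim to the tilted exponents, yielding the stretched-exponential bound $\exp(-c_1T|u|^\epsilon)$ in one step, uniformly for $T$ in compacts. Your route buys a quantitative, $T$-uniform majorant directly and sidesteps the paper's induction on $L^1$ derivatives of the tilted density (a step which, as written there, needs more than smoothness plus decay at infinity); the paper's route buys arbitrarily fast polynomial decay without re-examining the characteristic triplet. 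Both versions lean on the same slightly informal step inherited from Lemma \ref{lem:matrixexpint} when transferring decay from the diagonal entries of $F$ to the full matrix exponential, and both implicitly read the hypothesis that $\{\mathcal{M}H\}(c)$ exists as absolute convergence of the defining integral, which is what makes $\{\mathcal{M}H\}$ bounded on the whole line $\Re(z)=c$ and lets the factor $(1+u^2)$ from $F(-c-iu)$ be absorbed by your exponential majorant.
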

        \begin{proof}
          Since $\E\left[Y_T^{-c}\right]<\infty$, for each $\beta\in E$, we have $\lim_{x\rightarrow\pm\infty}e^{-cx}\rho_\beta^{(T)}(x)=0$. Moreover, $e^{-cx}\rho_\beta^{(T)}(x)\in C^\infty(\R)$ by Lemma \ref{lem:matrixexpint}, hence we have that $\frac{\partial}{\partial x}e^{-cx}\rho_\beta^{(T)}(x)\in L^1(\R)$.
          Then, by induction, we see that $\frac{\partial^n}{\partial x^n}e^{-cx}\rho_\beta^{(T)}(x)\in L^1(\R)$, for all $n\in\N$.
            
            By considering the Fourier transform, we have, for each $n\in\N$,
            \begin{align*}
                \left|u^n\E\left[ e^{(-c+iu)\xi_T} \right]\right|
                &= \left|u^n\mathcal{F}\left\{ e^{-cx}\rho_\beta^{(T)}(x) \right\}(-u)\right|
                = \left(2\pi\right)^{-n}\left| \mathcal{F}\left\{ \frac{\partial^n}{\partial x^n}e^{-cx}\rho_\beta^{(T)}(x) \right\}(-u) \right|,
            \end{align*}
            and so $u^n\E\left[ e^{(-c+iu)\xi_T} \right]\in L^\infty(\R)$. Thus, $\E\left[ e^{(-c+iu)\xi_T} \right]=o(u^{-n})$ as $u\rightarrow\pm\infty$ for all $n\in\N$ and, in particular, $\E[e^{(-c+iu)\xi_T}]\in L^n(\R)$ as a function of $u\in\R$, for all $n\in\N$.

            For each $\alpha\in E$, differentiating the result of Proposition \ref{prop:mellinGeneralPayoff} with respect to $T$ gives
            \begin{align}
              \label{eqn:mellinTimeDeriv}
                \frac{\partial}{\partial T}\left\{ \mathcal{M}_yC_H(\alpha,y,T) \right\}(s)
              = e^{-rT}\left\{ \mathcal{M}H \right\}(s)\sum_{\beta\in E}\left( \left(F(-s)-rI\right)e^{TF(-s)} \right)_{\alpha,\beta}
              ,
            \end{align}
            for all $s\in c+i\R$, since by the assumptions both $\left\{ \mathcal{M}H \right\}(s)$ and $\E\left[ Y_T^{-s} \right]$ exist and are finite.
            
            Suppose $s=c+iu$ with $u\in\R$, then for all $\alpha,\beta\in E$, we have
            \begin{align*}
                |G_{\alpha,\beta}(-(c+iu))|
                =\left|\E\left[ e^{-(c+iu)U_{\alpha,\beta}} \right]\right|
                \leq \E\left[\left| e^{-(c+iu)U_{\alpha,\beta}} \right|\right]
                =\E\left[ e^{-cU_{\alpha,\beta}} \right]
                <\infty,
            \end{align*}
            by Theorem \ref{thm:intLampKiu}, since $\E\left[Y_T^{-c}\right]<\infty$. Hence, $G_\beta(-s)$ is bounded over $c+i\R$.
            Since $\psi_\sigma$ is the Laplace exponent of a \levy process, we have that $\psi_\sigma(-(c+iu))=\mathcal{O}(u^2)$ as $|u|\rightarrow\infty.$ 
            We have already shown that  $e^{TF(-(c+iu))}=\left(\E_{\alpha}\left[ e^{-(c+iu)\xi_T};J_T=\beta \right]\right)_{\alpha,\beta\in E}=o(u^{-n})$ for all $n\in\N$ and by assumption $\left\{\mathcal{M}H\right\}(-(c+iu))$ is bounded over $-c+i\R$. Combining each of these terms shows that $\frac{\partial}{\partial T}\left\{ \left\{\mathcal{M}_yC_H(\alpha,y,T)\right\}(-c+iu) \right\}\in L^1(\R)$ with respect to $u$ and therefore we can apply the Mellin inverse transform along the line $-c+i\R$.         
        
            By standard results, the Mellin transform $\{\mathcal{M}H\}(s)$ is analytic, whilst $F(s)$ is analytic from Lemma \ref{lem:matrixexpint}. Moreover, it is clear that the result of Proposition \ref{prop:mellinGeneralPayoff} and (\ref{eqn:mellinTimeDeriv}) are both continuous functions of $T$.
            Hence, by the Leibniz integral rule, we pass the derivative through the integral in the definition of the inverse Mellin transform to obtain
            \begin{align*}
                \frac{\partial}{\partial T}C_H(\alpha,y,T)
                = \mathcal{M}^{-1}_s \left\{e^{-rT} \left\{ \mathcal{M}H \right\}(s)\sum_{\beta\in E}\left( \left(F(-s)-rI\right)e^{TF(-s)} \right)_{\alpha,\beta}  \right\}(y).
            \end{align*}            
            
            Now suppose $0<\tau<T$. Since for each $\alpha,\beta\in E$, we have $(e^{F(-(c+iu))})_{\alpha,\beta}\in L^1(\R)$ as a function of $u$, there exist real numbers $a<b$ such that $\|e^{F(-(c+iu))}\|<1$ for all $u\in\R\setminus(a,b)$. Let $M:=\max\left(\sup_{u\in[a,b]}\|e^{F(-(c+iu))}\|,1\right)$. Then, $M$ is finite since the interval $[a,b]$ is compact and $\| e^{F(-(c+iu))} \|$ is continuous. Hence, for all $t\in (T-\tau,T+\tau)$,
            \begin{align*}
                \left\|e^{tF(-(c+iu))}\right\| 
                \leq B(u) \coloneqq
                \begin{cases}
                        M^{T+\tau} ,                           & \text{ if } u \in[a,b];\\
                        \left\|e^{(T-\tau)F(-(c+iu))}\right\|,      & \text{ otherwise. }
                    \end{cases}
            \end{align*}
            Notice that the right-hand side is an $L^1$ function in $u$ and is constant with respect to $t\in(T-\tau,T+\tau)$. We can then obtain the bound, for all $t\in(T-\tau,T+\tau)$,
            \begin{align*}
                \left| \frac{\partial}{\partial T}\left( \left\{\mathcal{M}C_H(\alpha,y,T)\right\}(s) \right) \right|
                \leq e^{-r(T-\tau)}\left|\left\{ \mathcal{M}H \right\}(s)\right|\sum_{\beta\in E}\| F(-s)-rI\|B(u)\in L^1(\R). 
            \end{align*}
            Thus, by using the dominated convergence theorem in the integral of the inverse Mellin transform, $\frac{\partial}{\partial T}C_H(\alpha,y,T)$ is continuous in $T$. 
            
                \if{}
                            Using Corollary \ref{cor:FTeuropeanOption} for each $j\in E$,
                            \begin{align*}
                                \frac{\partial}{\partial T}C_H(T,y,j) = \sum_{b\in\{+,-\}} e^{R_b\log|y|}\mathcal{F}^{-1}\frac{\partial}{\partial T}\left(\hat{c}^{(R_b)}_{j,b}(T,\cdot) \right)(\log|y|)
                            \end{align*}
                            and so the time derivative exists and is continuous in $T$.
                \fi

            \if{}
            Moreover, since this is also satisfied by the characteristic functions of the \levy processes, it is true for the entries of $F(R-iu)$. Thus the time derivative of $C_H$ is rapidly decreasing if and only if $e^{TF(R-iu)}$  is rapidly decreasing which is a consequence of Proposition \ref{prop:matrixexpint}. 
            \fi 

        \end{proof}

                    \if{}
                    For European call options on $Y_T$ with $k<0$ we have seen that $g_+$ doesn't satisfy the conditions of Proposition \ref{prop:optionTimeSmoothness}. Instead we must study the time differentiablity of $|y|\left( e^{TF(1)} \right)_{\sgn(y),+} - k\Prob(Y_T>0)$. However the derivative of $e^{TF(1)}$ is $F(1)e^{TF(1)}$ and from Markov chain theory we know $\Prob(Y_T>0)$ is also continuously differentiable with respect to $T$.
                    \fi

        \begin{lem}
            \label{lem:optPriceSqInt}
            If the payoff function $H:\R^+\rightarrow\R$ is Lipschitz and $\E[Y_T^2]<\infty$, then $\E\left[C_H(J_t,Y_t,T-t)^2\right]<\infty$, for all $t\in(0,T)$.
        \end{lem}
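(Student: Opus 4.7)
The plan is to combine the Markov additive property, the conditional Jensen inequality, and the Lipschitz bound on $H$ to reduce the claim to the hypothesis $\E[Y_T^2] < \infty$. By the Markov additive property \eqref{eqn:MapDefn} and the definition of $C_H$ at the start of Section~\ref{sec:europeanOptions}, we have the identification
\begin{align*}
  C_H(J_t, Y_t, T-t) = e^{-r(T-t)}\,\E\!\left[ H(Y_T) \,\middle|\, \mathcal{F}_t \right]
\end{align*}
almost surely. This is the key structural fact: once established, the problem becomes one about the unconditional second moment of $H(Y_T)$.

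Next, I would apply the conditional Jensen inequality to the convex map $x \mapsto x^2$ to get
\begin{align*}
  C_H(J_t, Y_t, T-t)^2 \leq e^{-2r(T-t)}\,\E\!\left[ H(Y_T)^2 \,\middle|\, \mathcal{F}_t \right],
\end{align*}
and then take expectations and use the tower property to obtain
\begin{align*}
  \E\!\left[ C_H(J_t, Y_t, T-t)^2 \right] \leq e^{-2r(T-t)}\,\E\!\left[ H(Y_T)^2 \right].
\end{align*}

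Finally, since $H$ is Lipschitz with some constant $L$, we have $|H(y)| \leq |H(0)| + L|y|$ for all $y \in \R^+$, whence $H(y)^2 \leq 2H(0)^2 + 2L^2 y^2$. Therefore
\begin{align*}
  \E\!\left[ H(Y_T)^2 \right] \leq 2H(0)^2 + 2L^2\,\E[Y_T^2] < \infty,
\end{align*}
by the hypothesis $\E[Y_T^2] < \infty$ (which is equivalent, via Theorem~\ref{thm:intLampKiu} applied with $p=2$, to the existence of $F(2)$). Combining the two displayed bounds gives the result. There is no serious obstacle here; the argument is essentially a three-line chain of Jensen plus the Lipschitz estimate, and the only point that needs to be noted carefully is that the identification of $C_H(J_t, Y_t, T-t)$ with the conditional expectation of the discounted payoff is the defining property of $C_H$ under the Markov additive assumption.
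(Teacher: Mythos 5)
Your proposal is correct and follows essentially the same route as the paper: conditional Jensen applied to $x\mapsto x^2$, the tower/Markov additive property to reduce to $\E[H(Y_T)^2]$, and the Lipschitz bound to control this by $\E[Y_T^2]$. If anything, your final estimate $H(y)^2\leq 2H(0)^2+2L^2y^2$ is stated more carefully than the paper's closing inequality, but the argument is the same.
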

        \begin{proof}
            Using Jensen's inequality and the tower property, followed by the Markov additive property, we have
            \begin{align*} 
                \E\left[ \left( C_H(J_t,Y_t,T-t) \right)^2 \right]
                &\leq e^{-2r(T-t)}\E\left[ \hat{\E}_{(J_t,\log(Y_t))}\left[ H( \hat{Y}_{T-t} )^2 \right] \right]
                = e^{-2r(T-t)}\E\left[  H(Y_T)^2 \right],
            \end{align*}
            where $\hat{Y}$ is an independent but indentically distributed copy of $Y$ and $\hat{\E}$ is the corresponding expectation.
            However, $H$ is Lipschitz with some constant $h$, so
            \begin{align*}
                \E\left[ \left( C_H(J_t,Y_t,T-t) \right)^2 \right]
                &\leq h^2e^{-2r(T-t)}\E\left[Y_T^2\right] +e^{-2r(T-t)}H(0),
            \end{align*}
            hence, $C_H(J_t,Y_t,T-t)$ has second moments if $Y_T$ does.
            
        \end{proof}
        
    Under certain conditions, the following proposition expresses the price of a European option as the solution of a PIDE with the payoff function as a boundary condition.

    \begin{prop}[PIDE for European Option prices]
      \label{prop:PIDE-europeanOptionPrices}
      Suppose that: \begin{enumerate}
            \item \label{lst:pide:secondMoments} $Y_T$ has finite second moments;
            \item \label{lst:pide:continuity} for each $\beta\in E$,
                \begin{enumerate}
                    \item either, $\sigma_\beta^2>0$;
                    \item or, there exists an $\epsilon\in(0,2)$ such that  $\liminf_{r\downarrow0}r^{\epsilon-2}
                \int_{-\epsilon}^\epsilon |x^2|\mu_\beta(dx)>0$;
                \end{enumerate}
            \item \label{lst:pide:lipschitz} the payoff function $H$ is Lipschitz;
            \item \label{lst:pide:hPoly} there exists an $s>2$ such that $H(x)\leq x^s$ in some neighbourhood of $0$. 
        \end{enumerate}
        Then, $C_H(\alpha,y,t)$ is twice continuously differentible with respect to $y$ and once continuously differentiable with respect to $t$ in the domain $E\times\R^+ \times\R^+$. Moreover, it  satisfies the PIDE
        \begin{align}
            \label{eqn:pide}
            \begin{aligned}
                0=& -rC_H(\alpha,y,t) - \partial_t C_H(\alpha,y,t) \\
                &+\partial_y C_H(\alpha,y,t) y\left( a_{\alpha} + \frac{\sigma^2_{\alpha}}{2} + \int_\R\left(e^u-1-u\mathbbm{1}_{|u|\leq1}\right)\mu_{\alpha}(du) \right)\\
                &+ \frac{1}{2}\partial_{y^2} C_H(\alpha,y,t) y^2\sigma^2_{\alpha} \\
                &+ \int_\R \left(C_H(\alpha,ye^u,t)-C_H(\alpha,y,t)  - y(e^u-1)\partial_{y}C_H(\alpha,y,t)\right) \mu_{\alpha}(du)\\
                &+\sum_{\gamma\in E\setminus\{\alpha\}}\int_\R \left( C_H(\gamma,ye^{u},t) - C_H(\alpha,y,t)  \right)\nu_{\alpha,\gamma}(du),
            \end{aligned}
        \end{align}
        on the domain $E\times\R^+\times\R^+$, with the boundary condition
        \begin{align}
            \label{eqn:pideBoundary}
            C_H(\alpha,y,0) = H(y), \qquad \forall (\alpha,y)\in E\times\R^+.
        \end{align}
    \end{prop}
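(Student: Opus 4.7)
The plan is to first establish the required regularity of $C_H$, then apply It\^o's formula to the discounted process $e^{-rt}C_H(J_t,Y_t,T-t)$, and identify its predictable drift, which must vanish because of the martingale property inherited from the tower rule.

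First I would verify that the hypotheses of Corollary \ref{cor:optionDiffableSpace} and Lemma \ref{lem:optionTimeSmoothness} are fulfilled under (\ref{lst:pide:secondMoments})--(\ref{lst:pide:hPoly}). Since $H$ is Lipschitz and $H(x)\leq x^s$ near zero with $s>2$, any $a\in(1,\min(s,3))$ gives $x^{-(a+1)}H(x)\in L^1(\R^+)$; moreover $a-1\in(0,2)$, so Jensen's inequality combined with $\E[Y_T^2]<\infty$ yields $\E[Y_T^{a-1}]<\infty$, giving smoothness of $C_H$ in $y$ up to any order, in particular twice. For the time derivative the choice $c=-2$ gives $\E[Y_T^{-c}]=\E[Y_T^2]<\infty$ by (\ref{lst:pide:secondMoments}), while $\{\mathcal{M}H\}(-2)=\int_0^\infty x^{-3}H(x)\,dx$ is finite precisely because $s>2$ controls the behaviour near $0$ and Lipschitz continuity controls the behaviour near $\infty$; condition (\ref{lst:pide:continuity}) provides the density smoothness required by Lemma \ref{lem:matrixexpint}, so Lemma \ref{lem:optionTimeSmoothness} applies.

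Next, by the Markov additive property and the tower property, $M_t:=e^{-rT}\E[H(Y_T)\mid\mathcal{F}_t]=e^{-rt}C_H(J_t,Y_t,T-t)$ is a true martingale, and Lemma \ref{lem:optPriceSqInt} shows it is even square integrable. I would then apply It\^o's formula to $e^{-rt}C_H(J_t,Y_t,T-t)$, viewing $(J,\xi)$ through the decomposition (\ref{eqn:lampKiuDecomp}): between jumps of $J$, $\xi$ evolves as a L\'evy process with triplet $(a_\alpha,\sigma_\alpha,\mu_\alpha)$, and transitions of $J$ add further jumps with compensator kernel $\sum_{\gamma\neq\alpha}q_{\alpha,\gamma}\nu_{\alpha,\gamma}$. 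Converting between log-price and linear-price coordinates via $\partial_\xi=y\partial_y$ and $\partial_{\xi\xi}=y\partial_y+y^2\partial_{yy}$, the infinitesimal generator applied to $C_H$ produces exactly the right-hand side of (\ref{eqn:pide}) after the standard rearrangement that absorbs the truncation $u\mathbbm{1}_{|u|\leq 1}$ into the $y\partial_y C_H$ coefficient through the compensator $\int_\R(e^u-1-u\mathbbm{1}_{|u|\leq 1})\mu_\alpha(du)$.

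Setting the predictable finite-variation part of $M_t$ to zero pointwise in $(J_t,Y_t,t)$ then yields (\ref{eqn:pide}); joint continuity of $\partial_t C_H$, $\partial_y C_H$ and $\partial_{yy}C_H$ together with continuity of $C_H$ itself upgrades the identity from an a.e. statement to a genuine pointwise one on $E\times\R^+\times(0,T)$. The boundary condition (\ref{eqn:pideBoundary}) follows by letting $t\downarrow 0$: $\hat Y_t\to y$ in probability, and the linear growth of $H$ combined with $\E[Y_T^2]<\infty$ makes dominated convergence give $C_H(\alpha,y,t)\to H(y)$. The principal technical obstacle is to justify that the jump integrals appearing in It\^o's formula are absolutely convergent and that the corresponding compensated stochastic integrals are true martingales rather than merely local ones. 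This is done by bounding the small-jump L\'evy integrand $C_H(\alpha,ye^u,t)-C_H(\alpha,y,t)-y(e^u-1)\partial_y C_H$ via a second-order Taylor expansion exploiting the $C^2$-regularity, while the large-jump and $J$-switching integrands $C_H(\gamma,ye^u,t)-C_H(\alpha,y,t)$ are controlled using the linear growth of $C_H$ inherited from the Lipschitz continuity of $H$ together with $\E[Y_T^2]<\infty$ and Lemma \ref{lem:optPriceSqInt}.
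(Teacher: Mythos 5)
Your proposal is correct and follows essentially the same route as the paper's proof: verify the hypotheses of Corollary \ref{cor:optionDiffableSpace} and Lemma \ref{lem:optionTimeSmoothness} to obtain the spatial and temporal regularity, apply It\^o's formula to the discounted price $e^{-rt}C_H(J_t,Y_t,T-t)$ via the semimartingale decomposition of $Y$, check that the compensated jump and Brownian integrals are genuine (square-integrable) martingales using the Lipschitz growth of $C_H$ and the second-moment hypothesis, and conclude that the continuous finite-variation drift must vanish. The only deviations are cosmetic --- your explicit choice $c=-2$ in place of the paper's $c=-a$ with $a\in(1,3)\cap(1,s-1)$, and your explicit verification of the boundary condition, which the paper leaves implicit.
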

    
    \begin{proof}
      First, fix the maturity of the option $T>0$ and we will show the result for the domain $E\times\R^+\times(0,T)$. Then, since $T>0$ is arbitary, this can be extended to the full domain $E\times\R^+\times\R^+$.

      From condition (\ref{lst:pide:hPoly}) it follows that $H(0)=0$, whilst by the Lipschitz property, $|H(x)|\leq hx$, for all $x\geq0$, where $h$ is the Lipschitz constant.
      Hence, for any $a>1$, $|x^{-(a+1)}H(x)| \leq h|x|^{-a}$. For $a<s-1$,  by (\ref{lst:pide:hPoly}),  $|x^{-(a+1)}H(x)|\leq x^{-a-1+s}$ in a neighbourhood of $0$. Thus, for all $a\in(1,s-1)$, it follows that $x^{-(a+1)}H(x)\in L^1(\R^+)$ and so $\{\mathcal{M}H\}(-a)$ exists.

      For $a\in(1,3)$, we have $\E[Y_T^{a-1}]<\infty$ by the assumption $Y_T$ has second moments. Since $s>2$, we can take $a\in(1,3)\cap(1,s-1)$, then  the conditions of Corollary \ref{cor:optionDiffableSpace} hold and so $C_H(\alpha,y,T)$ is infinitley continuously differentiable as a function of $y\in\R^+$, for all $T>0$ and $\alpha\in E$.

      Moreover, the conditions on $H$ of Lemma \ref{lem:optionTimeSmoothness} are satisfied for $c=-a$, whilst the other conditions of Lemma \ref{lem:optionTimeSmoothness} are given directly by the assumtions of Proposition \ref{prop:PIDE-europeanOptionPrices}. Conquently, $C_H(\alpha,y,T)$ is continuously differentiable with respect to $T$, for all $(\alpha,y,T)\in E\times\R^+\times\R^+$.

        For each $\alpha,\beta\in E$, denote by $\tilde{N}_\beta$ the compensated Poisson random measure associated with the \levy process $\xi^{(\beta)}$. Also, let $\mathcal{M}_{\alpha,\beta}$ be the Poisson random measure associated with the jumps of $\xi$ induced by a change in $J$ from $\alpha$ to $\beta$, which has intensity $q_{\alpha,\beta}$. Then, let $\tilde{\mathcal{M}}_{\alpha,\beta}$ denote the compensated Poisson random measure associated with $\mathcal{M}_{\alpha,\beta}$ and let $\nu_{\alpha,\beta}(du)ds$ be the corresponding density. 
      
        Since $\{Y_t:t\geq0\}$ is integrable, by an adaptation of the semi-martingale decomposition given in \cite[pp 10]{Doring2012JumpSDE}, we have
        \begin{align}
          \label{eqn:YsemiMartingale}
          \begin{aligned}
            Y_t -Y_0=& \int_0^t\sigma_{J_s}Y_s dW(s)
            + \int_0^t Y_{s-} \left(a_{J_s} + \frac{\sigma_{J_s}^2}{2} + \int_{|u|\leq1} \left( e^u -1-u \right) \mu_{J_{s-}}(du) \right) ds \\
            & + \int_0^t\int_{|u|\leq 1} Y_{s-}(e^u-1)\tilde{N}_{J_{s-}}(ds,du)    + \int_0^t\int_{|u|>1} Y_{s-}(e^u-1)N_{J_{s-}}(ds,du) \\
            & + \sum_{\gamma\in E\setminus\{\alpha\}}\int_0^t\int_{\R}Y_{s-}(e^u-1)\mathcal{M}_{J_{s-},\gamma}(ds,du),
          \end{aligned}
        \end{align}
        where $(W_s)_{s\geq0}$ is a Brownian motion.

        Consider the discounted price process $\hat{C}_H(J_t,Y_t,T-t) := e^{-rt}C_H(J_t,Y_t,T-t)$, which is a local martingale under the risk neutral measure. Then, since $C_H$ is continuously differentiable with respect to $T$, we can apply Ito's Lemma to $\hat{C}_H$ to obtain, after some simplification,
        \begin{align*}
            \hat{C}_H(J_t,Y_t,T-t) - \hat{C}_H(J_0,Y_0,T)
            &= \int_0^t a(s) ds + M_t,
        \end{align*}
        where,
        \begin{align}
          \label{eqn:defnCtsA}
          \begin{aligned}
            a(s) &=  -\partial_t \hat{C}_H(J_{s-},Y_{s-},T-s)
            + \partial_x \hat{C}_H(J_{s-},Y_{s-},T-s)Y_{s-}\left( a_{J_{s-}} + \frac{\sigma^2_{J_{s-}}}{2} \right)
            \\&+ \frac{1}{2}\partial_{x^2} \hat{C}_H(J_{s-},Y_{s-},T-s) Y_{s-}^2\sigma^2_{J_{s-}}\\
            &+ \int_{|u|\geq1} \left(\hat{C}_H(J_{s-},Y_{s-}e^u,T-s)-\hat{C}_H(J_{s-},Y_{s-},T-s)  \right) \mu_{J_{s-}}(du)\\
            &+ \int_{|u|<1} \left( \hat{C}_H(J_{s-},Y_{s-}e^u,T-s )-\hat{C}_H(J_{s-},Y_{s-},T-s) - uY_{s-}\partial_{x}\hat{C}_H(J_{s-},Y_{s-},T-s) \right)\mu_{J_{s-}}(du)\\
            &+ \sum_{\gamma\in E\setminus\{J_{s-}\}}\int_\R \left( \hat{C}_H(\gamma,Y_{s-}e^{u},T-s) - \hat{C}_H(J_{s-},Y_{s-},T-s)  \right)\nu_{J_{s-},\gamma}(du)
            \end{aligned}
          \end{align}

        and
        \begin{multline*}
            M(t) = \int_0^t\sigma Y_{s-}\partial_x \hat{C}_H(J_{s-},Y_{s-},T-s) dB_s
                +\int_0^t \int_{\R} \left(\hat{C}_H(Y_{s-}e^u)-\hat{C}_H(J_{s-},Y_{s-},T-s) \right) \tilde{N}_{ J_{s-} } (ds,du)\\
                +\sum_{\gamma\in E\setminus\{J_{s-}\}}\int_0^t \int_{\R} \left(\hat{C}_H(\gamma,Y_{s-}e^u,T-s)-\hat{C}_H(J_{s-},Y_{s-},T-s) \right)  \mathcal{\tilde{M}}_{J_{s-},\gamma}(ds,du).
        \end{multline*}
        We now show that $M(t)$ is a square integrable martingale.
        Suppose $\alpha,\beta\in E$ and $x,y\in\R^+$. Then, for $s\in[0,T]$,
        \begin{align*}  
            |\hat{C}_H(\alpha,x,s) - \hat{C}_H(\beta,y,s)|
            &\leq \left| \E_\alpha[H(xY_s)] - \E_\beta[H(yY_s)] \right|\\
            &\leq \left| \E_\alpha\left[H(xY_s) - H(yY_s)\right] + \E_\alpha\left[  H(yY_s)\right] - \E_\beta\left[ H(yY_s) \right] \right|\\
            &\leq h|x-y|\E_\alpha\left[ Y_s \right] + h|y|\E_\alpha\left[Y_s\right] +h|y|\E_\beta\left[Y_s\right]\\
            &\leq h\left( |x-y|+2|y| \right)\max_{\gamma\in E}\E_\gamma\left[ Y_s \right].
        \end{align*}
        \if{}
                    If $\sigma\neq\gamma$, then 
                    \begin{align*}
                        &\left|\E_\sigma\left[ H(|x|Y_T) \right] - \E_\gamma\left[ H(|y|Y_T)  \right]\right|\\
                        &\qquad\leq \left|\E_\sigma\left[ H(|x|Y_T) - H(|x|) \right] \right| + \left| \E_\gamma\left[ H(|y|Y_T) - H(|y|) \right]\right| + \left| H(x) - H(|y|)\right| \\
                        &\qquad\leq h|x|\E_\sigma\left| Y_T-1  \right| + h|y|\E_\gamma\left|Y_T-1\right| + h|x-y|\\
                        &\qquad\leq h\left( \left(|x|+|y|\right)\left(\max_{j\in E}\E_j\left| Y_T-1 \right|\right) + |x-y|  \right).
                    \end{align*}
                    However, $|x-y|=|x|+|y|$ in the case $\sigma\neq\gamma$, 
                                    \if{}
                                    and hence
                                    \begin{align*}
                                        \left|\E_{\sgn(x)}\left[ G(|x|Y_T) \right] - \E_{\sgn(y)}\left[ G(|y|Y_T)  \right]\right|
                                        &\leq l\left( 2 + \max_{\sigma\in E}\E_\sigma|Y_T| \right)|x-y|
                                    \end{align*}
                                    \fi
                    hence applying this to the expectation formula of $\hat{C}_H$ gives
                    \begin{align*}
                        \left |\hat{C}_H(x) - \hat{C}_H(y) \right| 
                        &\leq he^{-r(T-t)} \left(2 + \max_{j\in E}\E_j\left| Y_{T-t} \right|\right) |x-y|
                    \end{align*}
                    and so $\hat{C}_H$ is Lipschitz with constant $he^{-r(T-t)}\left(2+\max_{\sigma\in E}\E_\sigma|Y_{T-t}|\right)$. 
                    \fi

        Hence, for any Poisson random measure $\mu_t$, such that $\int_\R(e^u-1)^2\mu_t(du)<\infty$ and $t\in[0,T]$, we have
        \begin{align*}
            & \E\left[ \int_0^t\int_\R \left( \hat{C}_H(J_{s}, Y_{s-}e^u,s )-\hat{C}_H(J_{s-},Y_{s-},s) \right)^2 \mu_{s-} (ds,du) \right] \\
            &\qquad\leq \E\left[ \int_0^t\int_\R \left(h(|e^u-1|+2)Y_{s-}\left( \max_{\alpha\in E} \E_{\alpha}\left[ |Y_{T-s}| \right]\right)\right)^2 \mu_{s-}(ds,du) \right]\\
            &\qquad\leq h^2\E\left[ \int_0^t \left(Y_{s-}\right)^2\left( \max_{\alpha\in E}\E_{\alpha}\left[ |Y_{T-s}| \right]\right)^2  \int_\R\left(|e^u-1|+2\right)^2 \mu_{s-}(du) ds \right]\\
            &\qquad<\infty,
        \end{align*}
        provided that $Y$ has finite first and second moments. By Theorem \ref{thm:intLampKiu}, since $Y_T$ has finite second moments, the measures $\mu_\alpha$ and $\nu_{\alpha,\beta}$ satisfy the condition on $\mu_t$ for all $\alpha,\beta\in E$. Hence, by \cite[pp 224, Chapter 4, Theorem 4.2.3]{applebaum2004levy} or \cite[Chapter 8, Proposition 8.8]{tankov2003financial}, the compensated Poisson terms of $M$ are square integrable martingales.
                    \if{}
                    \begin{multline*}
                        \int_0^t \int_{|u|\geq1} \left(\hat{C}_H(Y_{s-}e^u)-\hat{C}_H(Y_{s-}) \right) \tilde{N}_{\sgn(Y_{s-})}(ds,du)\\
                        +\int_0^t \int_{|u|<1} \left( \hat{C}_H( Y_{s-}e^u )-\hat{C}_H(Y_{s-}) \right) \tilde{N}_{\sgn(Y_{s-}}(ds,du)
                    \end{multline*}
                    is also a square integrable martingale.
                
                    Then, analogously to the $\tilde{N}$ case, we consider 
                    \begin{align*}
                        &\E\left[ \int_0^t\int_\R \left( \hat{C}_H(-Y_{s-}e^u) - \hat{C}_H(Y_{s-}) \right)^2 \nu_{\sgn(Y_{s-})}(du) ds \right]\\
                        &\qquad\leq \E\left[ \int_0^t\int_\R l^2 e^{-r2(T-s)}\left( \max_{\sigma\in\pm}\E_\sigma |Y_{T-s}-1| + 1 \right)^2\left| -Y_{s-}e^u - Y_{s-} \right|^2 \nu_{\sgn(Y_{s-})}(du) ds \right]\\
                        &\qquad\leq l^2 \E\left[ \int_0^t|Y_{s-}|^2 \left(\max_{\sigma\in\pm}\E_\sigma |Y_{T-s}-1| + 1 \right)^2 \int_\R (e^u+1)^2 \nu_{\sgn(Y_{s-})}(du)ds\right]
                    \end{align*}
                    hence, since $Y$ is square integrable which also implies $\E[\exp(2U_\pm)]<\infty$, by \cite[Chapter 8, Proposition 8.8]{tankov2003financial} the integral
                    \begin{align*}
                        \int_0^t \int_\R \left( \hat{C}_H( -Y_{s-}e^{u}) - \hat{C}_H(Y_{s-})  \right)\hat{\mathcal{M}}_{\sgn(Y_{s-})}(ds,du)
                    \end{align*}
                    is a square integrable martingale.
                    \fi

        For the Brownian integral in $M$, we consider $\partial_y \hat{C}_H$. For all $(\alpha,y,s)\in E\times\R^+\times[0,T]$, from Lipschitz continuity, we have
        $\left|\partial_y \hat{C}_H(\alpha,y,s)\right|\leq h\E_{\alpha}\left[ Y_{s} \right]$. Then, since $Y$ is square integrable,
        \begin{align*}
            \E\left[ \int_0^t \left( \sigma_{J_{s-}} Y_{s-} \partial_x\hat{C}_H(J_{s-},Y_{s-},T-s) \right)^2 ds  \right]
            \leq \E\left[ \int_0^t \left( \sigma_{J_{s-}} Y_{s-} h \E_{J_{s-}}\left[ Y_{T-s} \right] \right)^2 ds \right]
            <\infty
        \end{align*}
        and hence, by Ito's isommetry,
       $
            \int_0^t\sigma_{J_{s-}} Y_{s-}\partial_x \hat{C}_H(J_{s-},Y_{s-},T-s) dB_s
        $ 
        is a square integrable martingale.
        
        Therefore, we conclude that the process $M(t)$ is itself a square integrable martingale. Then, since $\hat{C}_H(J_T,Y_t,T-t)$ is a local martingale, it follows that 
       $
            \int_0^ta(s) ds = \hat{C}_H(J_t,Y_t,T-t) - \hat{C}_H(J_0,Y_0,T) - M(t)
        $ 
        is a local martingale. However, since this is an integral against $ds$, it is a continuous process with finite variation and must therefore be constant. Hence, $a(t)=0$ for all $t\in[0,T]$.
                    \if{}
                    From the definition of $a$ this gives
                    \begin{align*}
                        0=& -r\hat{C}_H(Y_s,s) + \partial_s \hat{C}_H(Y_{s-},s) \\
                        &+\partial_x \hat{C}_H(Y_{s-},s) Y_{s-}\left( a_{\sgn(Y_{s-})} + \frac{\sigma^2_{\sgn(Y_{s-})}}{2} + \int_\R\left(e^u-1-u\mathbbm{1}_{|u|\leq1}\right)\mu_{\sgn(Y_{s-})}(du) \right)\\
                        &+ \frac{1}{2}\partial_{x^2} \hat{C}_H(Y_{s-},s) Y_{s-}^2\sigma^2_{\sgn(Y_{s-})} \\
                            &+ \int_{|u|\geq1} \left(\hat{C}_H(Y_{s-}e^u)-\hat{C}_H(Y_{s-}) \right) \mu_{\sgn(Y_{s-})}(du)\\
                            &+ \int_{|u|<1} \left( \hat{C}_H(Y_{s-}e^u )-\hat{C}_H(Y_{s-}) - Y_{s-}(e^u-1)\partial_{x}\hat{C}_H(Y_{s-},s) \right)\mu_{\sgn(Y_{s-})}(du)\\
                            &+\int_\R \left( \hat{C}_H( -Y_{s-}e^{u}) - \hat{C}_H(Y_{s-})  \right)\nu_{\sgn(Y_{s-})}(du)
                    \end{align*}
                    \fi 

          Then, using
          \begin{align*}
            \partial_t \hat{C}_H\left(\alpha,y,t\right)
            = \partial_t e^{-r(T-t)}C_H\left(\alpha,y,t\right)
            = re^{-(T-t)}C_H\left(\alpha,y,t\right) + e^{-r(T-t)}\partial_tC_H\left(\alpha,y,t\right),
          \end{align*}
          making the substitution $t=T-s$ and multiplying by $e^{rs}$ in (\ref{eqn:defnCtsA})  gives the required PIDE.
                \if{}
                \begin{align*}
                    0=& -rC_H(Y_s,s) + \partial_s C_H(Y_{s-},s) \\
                    &+\partial_x C_H(Y_{s-},s) Y_{s-}\left( a_{\sgn(Y_{s-})} + \frac{\sigma^2_{\sgn(Y_{s-})}}{2} + \int_\R\left(e^u-1-u\mathbbm{1}_{|u|\leq1}\right)\mu_{\sgn(Y_{s-})}(du) \right)\\
                    &+ \frac{1}{2}\partial_{x^2} C_H(Y_{s-},s) Y_{s-}^2\sigma^2_{\sgn(Y_{s-})} \\
                         &+ \int_\R \left(C_H(Y_{s-}e^u)-C_H(Y_{s-})  - \mathbbm{1}_{\{|u|\leq1\}} Y_{s-}(e^u-1)\partial_{x}C_H(Y_{s-},s)\right) \mu_{\sgn(Y_{s-})}(du)\\
                        &+\int_\R \left( C_H( -Y_{s-}e^{u}) - C_H(Y_{s-})  \right)\nu_{\sgn(Y_{s-})}(du).
                \end{align*}
                \fi
                
    \end{proof}

    \begin{rem}
      The payoff function $H:\R^+\rightarrow\R:x\rightarrow(x-k)^+$ of a Euorpean call option with stirke $k>0$ satisfies the assumptions (3) and (4) of Proposition \ref{prop:PIDE-europeanOptionPrices}. Hence, if the price process corresponds to a MAP satisfying assumptions (1) and (2), then we can use Proposition \ref{prop:PIDE-europeanOptionPrices} to price European call options.
    \end{rem}

    \begin{rem}
      It is possible to recover the Mellin tranform expression of Proposition \ref{prop:mellinGeneralPayoff} from (\ref{eqn:pide}).
      By taking the Mellin transform of (\ref{eqn:pide}) and using the results:
      \begin{align*}
        \frac{\partial}{\partial t}\left\{\mathcal{M}_yC_H(\alpha,y,t)\right\}(s)
        &= \left\{\mathcal{M}_y\frac{\partial}{\partial t}C_H(\alpha,y,t)\right\}(s),\\
        \left\{\mathcal{M}_y y\frac{\partial}{\partial y}C_H(\alpha,y,t) \right\}(s)
        &= -s\left\{\mathcal{M}_yC_H(\alpha,y,t)\right\}(s),\\
        \left\{\mathcal{M}_y y^2C_H(\alpha,y,t)\right\}(s)
        &= s(s+1)\left\{\mathcal{M}_yC_H(\alpha,y,t)\right\}(s), \\
        \left\{\mathcal{M}_yC_H(\alpha,ye^u,t)\right\}(s)
        &=e^{-us}\left\{\mathcal{M}_yC_H(\alpha,y,t)\right\}(s),
      \end{align*}
      we have,
      \begin{align*}
        \partial_t\left\{\mathcal{M}_yC_H(\alpha,y,t)\right\}(s)
        &= \left\{\mathcal{M}_yC_H(\alpha,y,t)\right\}(s)\left( -r -sa_\alpha +s\int_\R u\mathbbm{1}_{\{|u|\leq1\}}\mu_\alpha(du) \right.\\
        &\left.+ \frac{1}{2}\sigma_\alpha^2s^2 + \int_\R\left( e^{-us} -1 \right)\mu_\alpha(du)
          +\sum_{\gamma\in E}\int_\R\left( -1\right)\nu_{\alpha,\gamma}(du)\right)\\
        &+ \sum_{\gamma\in E}\left\{\mathcal{M}_yC_H(\gamma,y,t)\right\}(s)\int_\R e^{-us}\nu_{\alpha,\gamma}(du).
      \end{align*}
      However, $\int_\R\nu_{\alpha,\gamma}(du)=q_{\alpha,\gamma}$ and $\sum_{\gamma\in E}\int_\R\nu_{\alpha,\gamma}(du)=q_\alpha$. Moreover, $\int_\R e^{-us}\nu_{\alpha,\gamma}(du)=q_{\alpha,\gamma}G_{\alpha,\gamma}(-s)$. Thus, if $m(s,t)\coloneqq\left(\left\{\mathcal{M}_yC_H(\alpha,y,t)\right\}(s)\right)_{\alpha\in E}$ is a column vector of dimension $|E|$, then,
      \begin{align*}
        \partial_t m(s,t)= \left(-rI + F(-s)\right) m(s).
      \end{align*}
      Hence, we obtain the solution
      \begin{align*}
        m(s,t) =e^{-rt} e^{tF(-s)}m(s,0),
    \end{align*}
    where, from the initial conditions we have  $m(s,0) =\left\{\mathcal{M}H(y)\right\}(s)(1,\cdots,1)^T$. Thus, we have the relation of Proposition \ref{prop:mellinGeneralPayoff}:
    \begin{align*}
      \left\{\mathcal{M}_y C_H(\alpha,y,t)\right\} = e^{-rt}\{\mathcal{M}H\}(s)\sum_{\beta\in E}\left(e^{tF(-s)}\right)_{\alpha,\beta}.
    \end{align*}
  \end{rem}

\subsection{Examples: \texorpdfstring{$|E|=2$}{|E|=2}}
In the following examples we consider the Lamperti-Kiu case, where $E=\{+,-\}$. An analouge of the construction (\ref{eqn:lampKiuDecomp}) is given in \cite[pp 2502, Theorem 6(i)]{Rivero2011}, \cite[pp 3, Section 1.2]{Kyp15} and is referred to as the Lamperti-Kiu decomposition. To simplify notation, for $\alpha\in\{+,-\}$ we will write $U_\alpha,q_\alpha$ and $G_{\alpha}$ for $U_{\alpha,-\alpha},q_{\alpha,-\alpha}$ and $G_{\alpha,-\alpha}$, respectivley.

    
    In this case, the characteristic equation of $F(z)$ can be solved analytically to obtain the eigenvalues
    \begin{align}
      \label{eqn:egienValuesOfF}
      \alpha(z) := \frac{1}{2}\left( \psi(z) + \Delta(z) \right)
        \qquad\text{and}\qquad
        \beta(z)  := \frac{1}{2}\left( \psi(z) - \Delta(z) \right),
    \end{align}
    where 
    $\psi_\pm^{(q)}(z):=\psi_\pm(z) - q_\pm$, 
    $\psi(z) := \psi_+^{(q)}(z) + \psi_-^{(q)}(z)$ 
    and
    \begin{align*}
        \Delta(z) &:=\sqrt{ \left( \psi_+^{(q)}(z) - \psi_-^{(q)}(z) \right)^2 + 4q_-q_+G_+(z)G_-(z)}.
    \end{align*}
    Let $p(x)=(x-t\alpha)(x-t\beta)$ be the characteristic polynomial of $tF$.
    Then, by considering the remainder on division by $p$, there exists a convergent series, $q(x)$, and a polynomial of degree 1, $r(x)\coloneqq s_0+s_1x$, such that
    $    e^x = q(x)p(x) + r(x)$. 
    Evaluating this at the eigenvalues, $t\alpha$ and $t\beta$, of $tF$, we have
    \begin{align*}
        e^{t\alpha} = 0 + s_0 + s_1t\alpha
        \qquad\text{and}\qquad
        e^{t\beta}  = 0 + s_0 + s_1t\beta.
    \end{align*}
    This system of equation can be solved to obtain
    \begin{align*}
        s_0(t,z) := \frac{\alpha e^{\beta t} - \beta e^{\alpha t}}{\alpha-\beta}
        \qquad\text{and}\qquad 
        s_1(t,z) := \frac{e^{\alpha t}-e^{\beta t}}{(\alpha-\beta)t}.
    \end{align*}
    By the Cayley-Hamilton theorem, it is known that $p(tF)=0$, hence
    \begin{align*}
        e^{tF(z)} = s_0(z)I +s_1(z)tF(z). 
    \end{align*}
    Then, using (\ref{eqn:egienValuesOfF}), the matrix exponential $e^{tF(z)}$ is given by
    \begin{equation}
        \label{eqn:matExpHyp}
        e^{F(z)t} = e^{\frac{t}{2}\psi(z)}\cosh\left( \frac{t}{2}\Delta(z) \right)I + \frac{e^{\frac{t}{2}\psi(z)}\sinh\left( \frac{t}{2}\Delta(z) \right)}{\Delta(z)}\left({\begin{array}{cc}
                    \psi^{(q)}_+(z)-\psi^{(q)}_-(z)     & 2q_+G_+(z)\\
                    2q_-G_-(z)      & \psi_-^{(q)}(z)-\psi_+^{(q)}(z)
                    \end{array}}\right),
    \end{equation}
    where $I$ is the identity matrix. This can be used to obtain explicit expressions for the Mellin transform of option prices.

    \if{}
    \begin{multline}
        \label{eqn:matExpHyp}
        e^{F(z)t} = e^{\frac{1}{2}(\psi_+^q + \psi_-^q)t }\\
                \left({\begin{array}{cc}
            \ch t\sqrt{(\psi_+^q - \psi_-^q)^2 + 4q_+q_-G_+G_-} & \sh t\sqrt{ (\psi_+^q - \psi_-^q)^2 + 4q_+q_-G_+G_- } \\
             \sh t\sqrt{ (\psi_+^q - \psi_-^q)^2 + 4q_+q_-G_+G_- }  & \ch t\sqrt{(\psi_+^q - \psi_-^q)^2 + 4q_+q_-G_+G_-} 
                    \end{array}}\right)
    \end{multline}
    where $\sh$ and $\ch$ denote the hyperbolic sine and cosine respectively.
    \fi 
    
    \begin{exmp}[Markov modulated compound Poisson processes with exponential jumps]
      \label{eg:compoundPoissonExpJump}
      Let $\lambda_+,\lambda_->0$, $U_\pm\sim\Exp(\lambda_\pm)$ and $\mu_\pm(dz)\coloneqq q_\pm\lambda_\pm e^{-\lambda_\pm z}dz$. Also set $a_\pm=\sigma_\pm=0$ and $q_+=q_-\eqqcolon q>0$. Then, the MAP $(J,\xi)$ corresponds to a Markov modulated compound Poisson process, of rate $q$, where the jumps are exponentially distributed with rate $\lambda_\alpha$, determined by the state of $J$. Then, for each $\alpha\in\{+,-\}$ and $z\in\C$ with $\Re(z)<\lambda_\alpha$, we have:
        \begin{align*}
            G_\alpha(z) &= \int_0^\infty e^{zu} \lambda_\alpha e^{-\lambda_\alpha u} du = \frac{\lambda_\alpha}{\lambda_\alpha - z},\\
            \psi_\alpha(z) &= \int_0^\infty \left( e^{uz} - 1 \right)\mu_\alpha(dz) = \int_0^\infty \left( e^{uz} - 1 \right)q\lambda_\alpha e^{-\lambda_\alpha u} du = q\left( G_\alpha(z) - 1 \right).
        \end{align*}
        Substituting this into (\ref{eqn:matExpHyp}) yields
        \begin{align*}
            e^{tF(z)} = \frac{e^{-2qt}}{(G_+(z)+G_-(z))}
            \left[
                e^{qt(G_+(z)+G_-(z))} \left( \begin{array}{cc}
                    G_+(z) & G_+(z) \\
                    G_-(z) & G_-(z)
                \end{array} \right)
                +
                \left( \begin{array}{cc}
                    G_-(z)  & -G_+(z) \\
                    -G_-(z) & G_+(z)
                \end{array} \right)
            \right].
        \end{align*}
            \if{}
                        and so element wise this can be written as
                        \begin{align*}
                            \left( e^{tF(z)} \right)_{i,j}
                            = \frac{e^{-2qt}}{(G_++G_-)}\left[ e^{qt(G_++G_-)}G_i + \chi_{i,j}G_{-j} \right]
                        \end{align*}
                        where $\chi_{i,j}=1$ if $i=j$ and $\chi(i,j)=-1$ if $i\neq j$.
            \fi
        Using Proposition \ref{prop:putcallMellinTransform}, for each $\alpha\in E$, we can compute the price of a European call option, via the inverse Mellin transform, as
        \ifDetails
          \begin{align*}
            C_\alpha(k) 
            &= \sum_{\beta\in E}\frac{1}{2\pi i}\int_{c+i\R} \frac{k^{-s}}{s(s+1)}\left(e^{TF(s+1)}\right)_{\alpha,\beta}ds
            = \sum_{\beta\in E}\frac{1}{2\pi i}\int_{c+1+i\R} \frac{k^{-(s-1)}}{(s-1)s}\left(e^{TF(s)}\right)_{\alpha,\beta}ds
          \end{align*}
        \fi 
        \begin{align*}
            C_\alpha(k) 
            &= \frac{1}{2\pi i}\int_{c+1+i\R} \frac{k^{-s+1}}{(s-1)s} \frac{2e^{-2qt}G_\alpha e^{qt(G_++G_-)}}{(G_++G_-)}ds + \frac{1}{2\pi i}\int_{c+1+i\R}\frac{k^{-s+1}e^{-2qt}}{s(s-1)} \left( \frac{G_{-\alpha} - G_{\alpha}}{G_++G_-} \right) ds,
        \end{align*}
        for $c\in(0,\min(\lambda_+,\lambda_-))$.

        For $\alpha\in E$, we now define the function
        \begin{align*}
          R_\alpha \coloneqq \delta_{1}(k) +  \sqrt{qt\lambda_\alpha}\mathbbm{1}_{\{k\geq1\}}k^{-\lambda_\alpha}\frac{I_1\left(2\sqrt{qt\lambda_\alpha\log(k)}\right)}{\sqrt{\log(k)}},
        \end{align*}
        where $I_1$ is the modified bessel function of the first kind (see Appendix \ref{app:specialFuncs}), and
        \begin{align*}
          D_\alpha \coloneqq  \begin{cases}
            d_1^{(\alpha)} k + d_2^{(\alpha)} , & \text{ if } k<1;
            \\
            d_3^{(\alpha)} k^{c_\alpha}, &\text{ if } k\geq1;
          \end{cases}
        \end{align*}
        where
        \begin{align*}
          d_1^{(\alpha)} &\coloneqq -\frac{1}{2}e^{-2qt},
          \qquad
          d_2^{(\alpha)} \coloneqq  \frac{e^{-2qt}\lambda_\alpha(\lambda_{-\alpha}-1)}{2\lambda_+\lambda_- - (\lambda_++\lambda_-)},
          \qquad
          d_3^{(\alpha)} \coloneqq \frac{e^{-2qt}(\lambda_\alpha-\lambda_{-\alpha} )}{2\left(2\lambda_+\lambda_- - (\lambda_++\lambda_-)\right)}
        \end{align*}
        and
        \begin{align*}
          c_\alpha = 1 -\frac{2\lambda_+\lambda_-}{\lambda_++\lambda_-}.
        \end{align*}

        Then, by applying the Mellin inversion theorem, it is shown in Appendix \ref{app:mellinInv} that
        \begin{align}
          \label{eqn:mellinCovolutionExpression}
          C_\alpha(k) = 2\{ D_\alpha * R_+ * R_- \}(k) + D_{-\alpha}(k) - D_\alpha(k),
        \end{align}
        where $*$ denotes the Mellin type convolution defined by
        \begin{align}
          \label{eqn:mellinConvolution}
          \{f*g\} (x) = \int_0^\infty f\left(y\right)g\left(\frac{x}{y}\right)\frac{1}{y}\,dy,
        \end{align}
        for any two functions $f,g:\R^+\rightarrow\R$.

        In Appendix \ref{app:seriesExpansion}, it is shown that $C_\alpha$ can then be written in series form as
        \begin{align*}
          C_\alpha(k)
          =&\sum_{n=0}^\infty\sum_{m=0}^\infty\sum_{r=0}^\infty 2\left( g(n,m,r,d_1^{(\alpha)},\lambda_-+1,,1k)+ g(n,m,r,d_2^{(\alpha)},\lambda_-,0,k)+g(n,m,r,d_3^{(\alpha)},\lambda_-,\lambda_-,k)\right)\\
           &+ \sum_{m=0}^\infty \frac{qt\lambda_+}{m!(m+1)!}
             \left( F(m,d_1^\alpha,\lambda_+,1,k)
             +F(m,d_2^\alpha,\lambda_+,0,k)
             +f(m,d_3^\alpha,\lambda_+,c_\alpha,k)
             \right)\\
          &+ \sum_{m=0}^\infty \frac{qt\lambda_{-}}{m!(m+1)!}
             \left( F(m,d_1^\alpha,\lambda_{-},2,k)
             +F(m,d_2^\alpha,\lambda_{-},1,k)
             +f(m,d_3^\alpha,\lambda_{-},c_\alpha+1,k)
             \right)\\
           &+\indicator{k<1}\left(d_1^{-\alpha}+d_2^{-\alpha}\right)
             +\indicator{k\geq1}\left(d_3^{-\alpha}k^{c_{-\alpha}} \right),
        \end{align*}
        where,
        \begin{align*}
          F(m,d,\lambda,c,k) &\coloneqq \frac{d k^c\Gamma(m+1,(\lambda+c+1)\log(k\vee1)))}{(\lambda_\alpha+c+1)^{m+1}},\\
          f(m,d,\lambda,c,k) &\coloneqq \frac{d k^c\gamma(m+1,(\lambda+c+1)\log(k\vee1)))}{(\lambda_\alpha+c+1)^{m+1}}
        \end{align*}
        and
        \begin{align*}
          g(n,m,r,d,l,c,k)
          = \frac{(qt)^{m+n+2}\sqrt{\lambda_+\lambda_-}(\lambda_--\lambda_+)^r(r+m)!}{m!(m+1)!(n+1)!r!(r+m+n+1)!} \frac{dk^c}{l^{r+m+n+2}}\Gamma(r+m+n+2,l\log(k)).
        \end{align*}

        Now consider evaluating this at $k=1$, the so called ``at the money'' option. In this case, the triple convolution becomes
        \begin{align*}
          \left( D_\alpha * J_+ * J_- \right)(1)
          &=qt\sqrt{\lambda_+\lambda_-}\sum_{n=0}^\infty\sum_{m=0}^\infty\sum_{r=0}^\infty \frac{qt^{m+n+1}(\lambda_--\lambda_+)^r(r+m)!}{m!(m+1)!(n+1)!r!(r+m+n+1)!}\\
          &\qquad\times
            \left( \frac{d_1^{(\alpha)}}{(\lambda_-+1)^{r+m+n+2}}\Gamma(r+m+n+2)
            + \frac{d_2^{(\alpha)}}{(\lambda_-)^{r+m+n+2}}\Gamma(r+m+n+2)
            \right),
        \end{align*}
        where the upper incomplete Gamma functions have become complete Gamma functions and the lower inncomplete Gamma functions have evaluated to $0$. Then, first considering the sum over $n$, we have
        \begin{align*}
          \left( D_\alpha * J_+ * J_- \right)(1)
          &= qt\sqrt{\lambda_+\lambda_-}\sum_{m=0}^\infty\sum_{r=0}^\infty \frac{(qt)^m(r+m)!(\lambda_--\lambda_+)^r}{m!(m+1)!r!}\\
          &\qquad\times\left(
            \frac{d_1^{(\alpha)}}{(\lambda_-+1)^{r+m+1}}\left( \exp\left( \frac{qt}{\lambda_-+1} \right) - 1  \right)
            + \frac{d_2^{(\alpha)}}{\lambda_-^{r+m+1}}\left( \exp\left(\frac{qt}{\lambda_-}\right)-1\right)
            \right).
        \end{align*}
        \ifDetails
          Then, making use of the identity
          \begin{align*}
            \pFq{1}{1}\left(r+1,2,x\right)
            = \sum_{m=0}^\infty \frac{(r+m)!}{r!}\frac{1}{(m+1)!}\frac{x^m}{m!},
          \end{align*}
          where $\pFq{1}{1}$ is the generalised hypergeometric function , we have
        \fi 
        Now, considering the sum over $m$ and identifying the hypergeometric function $\pFq{1}{1}$ (see Appendix \ref{app:specialFuncs}), gives
        \begin{align*}
         \left(D_\alpha*J_+*J_-\right)(1)
         &= qt\sqrt{\lambda_+\lambda_-}
         d_1^{(\alpha)}\left(\exp\left( \frac{qt}{\lambda_-+1}  \right)-1\right) \sum_{r=0}^\infty\frac{1}{\left(\lambda_-+1\right)^{r+1}}\pFq{1}{1} \left(r+1,2;\frac{qt}{\lambda_-+1}\right)\\
         &\qquad+ qt\sqrt{\lambda_+\lambda_-} d_2^{(\alpha)}\left(\exp\left( \frac{qt}{\lambda_-}  \right)-1\right) \sum_{r=0}^\infty\frac{1}{\lambda_-^{r+1}}\pFq{1}{1} \left(r+1,2;\frac{qt}{\lambda_-}\right).
       \end{align*}
       Moreover, for each $\alpha\in E$ and $c\in\R$
       \begin{align*}
         \sum_{m=0}^\infty \frac{qt\lambda_\beta}{m!(m+1)!}F(m,d^{(\alpha)},\lambda_\beta+c_\alpha+1,c,1)
         &= \sum_{m=0}^\infty \frac{qt\lambda_\beta}{m!(m+1)!}\frac{d^{(\alpha)}k^{c_\alpha}m!}{(\lambda_\beta+c_\alpha+1)^{m+1}}\\
         &=qtd^{(\alpha)}\lambda_\beta \left(\exp\left(\frac{1}{\lambda_\beta+c_\alpha+1}\right)-1\right),
       \end{align*}
       hence,
       \begin{align*}
         \left\{\left\{\indicator{x\geq1}\frac{1}{x}J_\beta(x)\right\} * D_\alpha(x)\right\}(1)
          = qtd^{(\alpha)}\lambda_\beta\left( \exp\left(\frac{1}{\lambda_\beta+2} + \frac{1}{\lambda_\beta+1} + \frac{1}{\lambda_\beta+c_\alpha+1}\right)   - 3\right).
       \end{align*}

       Combining these results,
       \begin{align*}
         C_\alpha(1)
         &=2qt\sqrt{\lambda_+\lambda_-}
         d_1^{(\alpha)}\left(\exp\left( \frac{qt}{\lambda_-+1}  \right)-1\right) \sum_{r=0}^\infty\frac{1}{\left(\lambda_-+1\right)^{r+1}}\pFq{1}{1} \left(r+1,2;\frac{qt}{\lambda_-+1}\right)\\
         &\qquad+ qt\sqrt{\lambda_+\lambda_-} d_2^{(\alpha)}\left(\exp\left( \frac{qt}{\lambda_-}  \right)-1\right) \sum_{r=0}^\infty\frac{1}{\lambda_-^{r+1}}\pFq{1}{1} \left(r+1,2;\frac{qt}{\lambda_-}\right)\\
         &\qquad+ qtd^{(\alpha)}\lambda_+\left( \exp\left(\frac{1}{\lambda_++2} + \frac{1}{\lambda_++1} + \frac{1}{\lambda_++c_\alpha+1}\right)   - 3\right)\\
         &\qquad+ qtd^{(\alpha)}\lambda_-\left( \exp\left(\frac{1}{\lambda_-+3} + \frac{1}{\lambda_-+2} + \frac{1}{\lambda_-+c_\alpha+2}\right)   - 3\right)\\
         &\qquad + k^{c_{-\alpha}}d_3^{(-\alpha)}.
       \end{align*}

        \if{}
                        so
                        \begin{align*}
                            \mathcal{M}\left\{ D_i * R_+ * R_- \right\}(s) 
                            &=\frac{k^{1}e^{-2qt}}{s(s-1)}\frac{G_i}{(G_++G_-)}\left[ \left( e^{qt(G_++G_-)}  - 1 \right) -\mathcal{M}R_+(s) - \mathcal{M}R_-(s) \right]\\
                        \end{align*}
                        which we can rearrange to obtain
                        \begin{align*}
                            \mathcal{M}\left\{ D_i*\left( R_+*R_- + R_+ + R_-  \right) \right\}(s)
                            = \frac{k^{1}e^{-2qt}}{s(s-1)}\frac{G_i}{(G_++G_-)}\left( e^{qt(G_++G_-)}  - 1 \right).
                        \end{align*}
                        Thus,
                        \begin{align*}
                            \left\{\mathcal{M}C\right\}(s) = \mathcal{M}\left\{ D_i * ( R_+ * R_- + R_+ + R_-) \right\}(s) + e^{-2qt}\mathcal{M}_k\left\{  (1-k)^+\right\}(s)
                        \end{align*}
                        and so
                        \begin{align*}
                            C_i(k) = \left\{ D_i * ( R_+ * R_- + R_+ + R_-) \right\}(k) + e^{-2qt}(1-k)^+.
                        \end{align*}        
            \fi

        \if{} 
                \textbf{-----------------------------}
                Now we consider
                \begin{align*}
                    E_i^c(k) 
                    \coloneqq
                    \frac{1}{2\pi i}\int_{c+1+i\R}\frac{k^{-(s-1)}}{(s-1)s}\frac{e^{-2qt}}{(G_++G_-)}e^{qt(G_++G_-)}G_i ds
                \end{align*}
                which also has a pole at $s^*$ with the residue
                \begin{align*}
                    \text{Res}(s^*)   
                    =\frac{k^{-(s^*-1)}e^{-2qt}(\lambda_{-i}-\lambda_i)}{2\lambda_+\lambda_- - (\lambda_+ + \lambda_-)}
                \end{align*}        
                since $e^{qt(G_+(s^*)+G_-(s^*)}=1$. However, there are also singularities at $\lambda_+$ and $\lambda_-$ coming from the exponential term. For $\sigma\in\{+,-\}$, by expanding $e^{qtG_\sigma(z)}$ we have that the integrand is given by
                \begin{align*}
                    \sum_{k=0}^\infty\frac{1}{k!}\left(\frac{qt\lambda_\sigma}{\lambda_\sigma-s}\right)^k\left( \sum_{n=0}^\infty \left( \frac{qt\lambda_{-\sigma}}{\lambda_{-\sigma}-s} \right)^n \right)\frac{k^{-(s-1)}}{(s-1)s}e^{-2qt}\frac{\lambda_i(\lambda_{-i}-s)}{\lambda_++\lambda_-}
                \end{align*}
                hence the residue at $\lambda_\sigma$ is given by
                \begin{align*}
                    \sum_{k=0}^\infty\frac{1}{k!}\left(qt\lambda_\sigma\right)^k  \frac{d^{k-1}}{ds^{k-1}}\left\{\exp\left(\frac{qt\lambda_{-\sigma}}{\lambda_{-\sigma}-s} \right)\frac{k^{-(s-1)}}{(s-1)s}e^{-2qt}\frac{\lambda_i(\lambda_{-i}-s)}{\lambda_++\lambda_-}\right\}\Bigg|_{s=\lambda_{\sigma}}
                \end{align*}      
                and so 
                \begin{align*}
                    E^c_i(k) = \text{Res}(s^*) + \text{Res}(\lambda_+) + \text{Res}(\lambda_-). 
                \end{align*}
                Putting this together we have
                \begin{align*}
                    C_i(k) = 2E^c_i(k) + D^c_{-i}(k) - D^c_{i}(k).
                \end{align*}
    \fi
    \end{exmp}

    \begin{exmp}[Skew-symmetric Markov modulated compound Poisson process]
      Simpler examples can be found in the case that $(1,1)^T$ is a right eigenvector of $F(x)$ for all $x>0$. This occurs if and only if
      \begin{align}
        \label{eqn:mellinVectorIff}
        \psi_+(x) + q_+\left(G_+(x) -1\right) = \psi_-(x) + q_-\left(G_-(x) - 1\right).
      \end{align}
      If this holds, then $(1,1)^T$ is also an eigenvector of $e^{TF(x)}$, for all $x,T>0$, corresponding to the eigenvalue $\exp\left(T(\psi_+(x) + q_+\left(G_+(x)-1\right))\right)$. Thus, from Proposition \ref{prop:putcallMellinTransform}, we have
      \begin{align*}
        \left\{\mathcal{M}_k C_\alpha(k)\right\}(u) = e^{-rT}\frac{\exp\left(T\left(\psi_+(u+1) + q_+\left(G_+(u+1)-1\right)\right)\right)}{u(u+1)},
      \end{align*}
      where $C_\alpha(k)$ denotes the price of a European call option with strike $k>0$ and maturity $T>0$, when $(J_0,Y_0)=(\alpha,1)\in E\times\R^+$.
      
      We now consider a particular example of when (\ref{eqn:mellinVectorIff}) holds. Suppose that $q_+=q_-\eqqcolon q$ and that each of the \levy processes corresponding to the states of $E$ is a compound Poisson processes (so $a_\pm=\sigma_\pm=0$). Moreover, suppose that the distribution of the jumps $U_\pm$, corresponding to chages in state of $J$ from $\pm$ to $\mp$, are given by $\nu_\pm$ and let the \levy measure of $\xi^\pm$ be given by $\mu_\pm\coloneqq q\nu_\mp$. Then, $\psi_\pm(u) = q(G_{\mp}(u)-2)$. 
      %
      If we make the further assumption that the densities of $U_\pm$ satisfy
      $\mu_+(x) + \mu_-(x) = 2e^x$
      for $x\in(-\infty,0)$, then we can immediatley compute that $\psi_\alpha(u) + q_\alpha(G_\alpha(u)-1) = -2qu(u+1)^{-1}$ for all $\alpha\in E$.
      Hence,
      \begin{align*}
        \left\{\mathcal{M}_k C_\alpha(k)\right\}(u)
        = e^{-rT}\exp\left(-\frac{2qT(u+1)}{u+2}\right)\frac{1}{u(u+1)}
        =e^{-(r+2q)T}\exp\left(\frac{2qT}{u+2}\right)\frac{1}{u(u+1)}.
      \end{align*}
      Define the function $R:\R^+\rightarrow\R$ by
      \begin{align*}
        R(k) \coloneqq  \delta_1(k) + \sqrt{2qT}\indicator{k\leq1} k^{2} \frac{I_1\left(2\sqrt{-2qT\log(k)}\right)}{\sqrt{-\log(k)}},
        \qquad k\geq0.
      \end{align*}
      Then, it is known that
      \begin{align*}
        \left\{ \mathcal{M}R \right\}(u)
        =\exp\left(\frac{2qT}{2+u}\right)
        \qquad\text{and}\qquad
        \mathcal{M}\left\{ (1-k)^+ \right\}(s) = \frac{1}{u(u+1)}.
      \end{align*}
      Hence, by the Mellin inversion theorem,
      \begin{align*}
        C_\alpha(k) = e^{-(r+2q)T}\left\{R(x)*(1-x)^+\right\}(k).
      \end{align*}
      Expanding the Mellin convolution, we have
      \begin{align*}
        &\left\{ R(x^{-1}) * (1-x)^+ \right\}(k)\\
        &\qquad= \int_0^\infty \delta_1(x)\left(1-\frac{k}{x}\right)^+\frac{1}{x}
          +  \sqrt{2qT}\indicator{x\leq1} x^{2} \frac{I_1\left(2\sqrt{-2qT\log(x)}\right)}{\sqrt{-\log(x)}}\left(1-\frac{k}{x}\right)^{+}\frac{1}{x} dx\\
        &\qquad= (1-k)^+ + \sqrt{2qT}\int_k^1 (x-k)\frac{I_1(2\sqrt{-2qT\log(x)})}{\sqrt{-\log(x)}}dx,
      \end{align*}
      whenever $k\leq1$, whilst $\left\{ R(x^{-1}) * (1-x)^+ \right\}(k)=0$ for $k>0$.
      However, using the series expansion of $I_1$ and a change of variables,
      \begin{align*}
        \int_k^1  \frac{I_1(2\sqrt{-2qT\log(x)})}{\sqrt{-\log(x)}} dx
        =\sum_{m=0}^\infty \frac{(2qT)^{m+1/2}}{m!(m+1)!}\int_{k}^1 \log\left(\frac{1}{x}\right)^m dx
        &= \sum_{m=0}^\infty \frac{(2qT)^{m+1/2}}{m!(m+1)!} \gamma(m+1,\log(1/k))
      \end{align*}
      and
      \begin{align*}
        \int_k^1  \frac{xI_1(2\sqrt{-2qT\log(x)})}{\sqrt{-\log(x)}} dx
        =\sum_{m=0}^\infty \frac{(2qT)^{m+1/2}}{m!(m+1)!}\int_{k}^1 x\log\left(\frac{1}{x}\right)^m dx
        &= \sum_{m=0}^\infty \frac{2^{-1/2}(qT)^{m+1/2}}{m!(m+1)!} \gamma(m+1,2\log(1/k)).
      \end{align*}
      Hence, for all $k\leq1$,
      \begin{multline}
        C_\alpha(k) = e^{-(r+2q)T}\left(1 - k + \sum_{m=0}^\infty
          \frac{(qT)^{m+1/2}2^{-1/2}\gamma(m+1,2\log(1/k))}{m!(m+1)!}
            - \frac{k(2qT)^{m+1/2}\gamma(m+1,\log(1/k))}{m!(m+1)!}\right)
      \end{multline}
      and $C_\alpha(k)=0$ for all $k>1$. Notice that  $C_\alpha(1)=0$, hence the option price is continuous at this transition point. Moreover, it is not surprising that $C_\alpha(k)=0$ for $k>1$ since the $\xi_t$ is a (weakly) decreasing processes, thus once $\xi<k$ the option can never regain its value. The maximal value of the call option is achieved when $k=0$. In this case,
      \begin{align*}
        C_\alpha(0)
        =\left(1-\frac{1}{\sqrt{2qT}}\right)e^{-(r+2q)T} + \frac{1}{\sqrt{2qt}}e^{-(r+q)T}.
      \end{align*}
      
    \end{exmp}

    Whilst  expressions for $C_\alpha$ were obtained in some of these examples, albeit with high levels of complexity, the main benefit of the Mellin transform approach is that it allows numerical computation of option prices via the Fast Fourier Transform. We can also use the Mellin transform expression to conduct sensitivity analysis of option prices.

\section{Comparison of European and  Asian Call Option Prices}
\label{sec:martingaleConditions}

An Asian option, with payoff function $H:\R^+\rightarrow\R$ and maturity $T\geq0$, on an asset with price process $\{Y_t:t\geq0\}$, is a contract which pays its owner $\int_{T_0}^TY_sds$ at time $T$, for some $T_0\in(0,T)$. Similarily to European options, an Asian option with payoff function $H(x)\coloneqq(x-k)^+$, for some $k>0$, is called an Asian call option, whilst if the payoff function is $H(x)\coloneqq(k-x)^+$, then it is called an Asian put option. In both cases, $k$ is referred to as the strike price.

Under the equivalent martingale measure, $\Prob$, considered in Section \ref{sec:europeanOptions}, the price of an Asian option at time $t<T$ is given by
\begin{align*}
  e^{-r(T-t)}\E\left[H\left( \int_{T_0}^T Y_s\, ds \right) \:\middle|\: \mathcal{F}_t \right].
\end{align*}
As in Section \ref{sec:europeanOptions}, we assume that the price process of the underlying asset is given by an exponential MAP model. That is, $Y_t\coloneqq\exp(\xi_t)$, for all $t\geq0$, where $(J,\xi)$ is a MAP. Then, following the simplifying steps of \cite{gemanYor}, for $t\in(T_0,T)$,
\begin{align*}
  \E\left[H\left(\int_{T_0}^TY_s\, ds\right)\:\middle|\:\mathcal{F}_t\right]
  &=\E\left[H\left(\int_{T_0}^te^{\xi_s}\,ds + e^{\xi_t}\int_{0}^{T-t}e^{\xi_{t+s}-\xi_t}\,ds \right)\:\middle|\:\mathcal{F}_t\right].
\end{align*}
Since $\gamma_t\coloneqq\int_{T_0}^t e^{\xi_s}ds$ and $e^{\xi_t}$ are both $\mathcal{F}_t$ measurable, there is a function $H_t(x)\coloneqq H(\gamma_t + e^{\xi_t}x)$, such that
\begin{align*}
  \E\left[H\left(\int_{T_0}^TY_s\, ds\right)\:\middle|\:\mathcal{F}_t\right]
  = \E\left[H_t\left(\int_{0}^{T-t}e^{\xi_{t+s}-\xi_t}\,ds\right)\:\middle|\:\mathcal{F}_t\right]
  = \E_{J_t}\left[H_t\left(\int_0^{T-t}e^{\hat{\xi}_s}\,ds\right)\right],
\end{align*}
where $\hat{\xi}$ is an independent copy of $\xi$ and the second equality follows from the Markov additive property. Hence, an understanding of the price of an Asian option can be obtained by studying the simpler object
\begin{align*}
  C_H^A(\alpha,y,T) \coloneqq e^{-r\tau}\E_{\alpha,\log(y)}\left[H\left(\int_0^T e^{\xi_s}\, ds \right)\right].
\end{align*}
We wish to make a comparison of the prices of European and Asian options under an exponential MAP model. To do this we will need the following Martingale properties of exponential MAPs, which we derive from Dynkin's formula.

    Then, let $A$ denote the (extended) generator of the Markov process $(J,Y)$, where $Y_t:=\exp(\xi_t)$ for all $t\geq0$ and $(J,\xi)$ is a MAP. Denote the domain of the extended generator by $\mathbb{D}(A)$. From \cite{Rivero2011}, it is known that, for a bounded continuous function $f\in \mathbb{D}(A)$, we have
    \begin{equation}
        \label{eqn:lampKiuGen}
        (Af)(\alpha,x) = \left( \mathcal{L}^{(\alpha)}\exp\circ f(\alpha,\cdot)\right)(\log|x|) + \sum_{\beta\in E\setminus\{\alpha\}}q_{\alpha,\beta}\left(\E\left[ f(\beta,x\exp(U_{\alpha\beta}))\right] - f(\alpha,x) \right),
    \end{equation}
    for all $(\alpha,x)\in E\times\R^+$, where $\mathcal{L}^{(\alpha)}$ is the generator of $\xi^{(\alpha)}$. We can now state a martingale condition for $Y$.

    
    \begin{thm}[Martingale condition for Lamperti-Kiu]
        \label{thm:mgCondLampKiu}
        Let $f:E\times\R^+\rightarrow\R:(\alpha,x)\rightarrow x$ and $\{\mathcal{F}_t\}_{t\geq0}$ be the natural filtration of $(J,\xi)$. Then, $Y$ is a martingale with respect of $\mathcal{F}$, if and only if, $f\in\mathbb{D}(A)$ and $(Af)(\alpha,1)=0$, for all $\alpha\in E$.
    \end{thm}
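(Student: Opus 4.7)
The strategy is to use Dynkin's formula together with equation~(\ref{eqn:lampKiuGen}) to reduce the martingale property of $Y$ to a pointwise algebraic condition on $F(1)$, then translate that back into a statement about the generator applied to $f(\alpha,x)=x$. The first step is to compute $(Af)(\alpha,x)$ explicitly for this $f$. The term $\mathcal{L}^{(\alpha)}\exp\circ f(\alpha,\cdot)$ evaluated at $\log x$ yields $\psi_\alpha(1)\,x$, since $\mathcal{L}^{(\alpha)}$ applied to the exponential function has eigenvalue $\psi_\alpha(1)$. The jump sum in (\ref{eqn:lampKiuGen}) becomes $\sum_{\beta\neq\alpha}q_{\alpha,\beta}(xG_{\alpha,\beta}(1)-x)$. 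Combining these and using the explicit form of $F$ in (\ref{eqn:exponentExplicit}), one obtains
\begin{align*}
    (Af)(\alpha,x) = x\Bigl(\psi_\alpha(1)-q_\alpha+\sum_{\beta\neq\alpha}q_{\alpha,\beta}G_{\alpha,\beta}(1)\Bigr)=x\sum_{\beta\in E}(F(1))_{\alpha,\beta}=x(Af)(\alpha,1),
\end{align*}
so that $(Af)(\alpha,1)=0$ for every $\alpha$ is equivalent to $F(1)\mathbf{1}=0$, where $\mathbf{1}\in\R^{|E|}$ has all entries equal to one.

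For the forward direction, suppose $Y$ is a martingale. Then $Y_t$ is integrable for every $t\geq0$, so by Theorem~\ref{thm:intLampKiu} the matrix $F(1)$ exists; in particular every quantity appearing in the computation of $(Af)$ is finite, which is enough to place $f$ in the extended domain $\mathbb{D}(A)$ (integrability of $\int_0^t|Af|(J_s,Y_s)\,ds$ follows from Fubini and integrability of $Y$). Dynkin's formula then shows that $Y_t-Y_0-\int_0^tY_s(Af)(J_s,1)\,ds$ is a local martingale. Since $Y_t-Y_0$ is itself a martingale by hypothesis, the continuous finite-variation process $\int_0^tY_s(Af)(J_s,1)\,ds$ must also be a local martingale, hence identically zero. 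Because $Y_s>0$ and, by irreducibility of $J$, every state is visited with positive probability, this forces $(Af)(\alpha,1)=0$ for all $\alpha\in E$.

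For the converse, assume $f\in\mathbb{D}(A)$ with $(Af)(\alpha,1)=0$ for every $\alpha$. The computation above gives $(Af)(\alpha,x)\equiv0$, so Dynkin's formula applied to $f$ yields that $Y_t-Y_0$ is a local martingale, i.e.\ $Y$ is a local martingale. By Lemma~\ref{lem:noStrictLoclMG} this upgrades to a true martingale, completing the proof. The subtle point—and the step I expect to need the most care—is the justification that $f(\alpha,x)=x$ really lies in the extended domain and that the formula~(\ref{eqn:lampKiuGen}), originally stated for bounded continuous functions, applies to this unbounded $f$: this will be handled either by truncating $f$ (e.g.\ $f_n(\alpha,x)=x\wedge n$), passing to the limit using the integrability ensured by existence of $F(1)$, or by invoking the semimartingale decomposition (\ref{eqn:YsemiMartingale}) used earlier in the proof of Proposition~\ref{prop:PIDE-europeanOptionPrices}, which directly gives the drift $Y_s\sum_\beta(F(1))_{J_s,\beta}\,ds$ without requiring $f$ to be bounded.
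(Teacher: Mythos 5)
Your proposal is correct and follows essentially the same route as the paper: both reduce the problem to the semimartingale decomposition (\ref{eqn:YsemiMartingale}) and the generator identity (\ref{eqn:lampKiuGen}), exploit the multiplicative invariance $(Af)(\alpha,x)=x\,(Af)(\alpha,1)$, and handle integrability via Theorem \ref{thm:intLampKiu} and Lemma \ref{lem:noStrictLoclMG}. The only point of divergence is the final step of the forward direction: the paper differentiates $\E\left[\int_u^t(Af)(J_s,Y_s)\,ds\,\middle|\,\mathcal{F}_u\right]=0$ in $t$ and then takes a careful a.s.\ limit $s\downarrow0$ by splitting on the event $\{T_1>s\}$, whereas you observe that $\int_0^tY_s(Af)(J_s,1)\,ds$ is a continuous finite-variation local martingale and hence vanishes identically --- a cleaner argument that the paper itself deploys in the proof of Proposition \ref{prop:PIDE-europeanOptionPrices}.
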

    \begin{proof}
        \textit{Sufficiency:}\newline
        First suppose $Y$ is a martingale and thus is integrable. Then, Theorem \ref{thm:intLampKiu}(\ref{list:integrability:decomp}) holds and under these conditions a semi-martingale decomposition of $Y$ is given by (\ref{eqn:YsemiMartingale}). This can then be rearranged to give, for all $t\geq0$,
        \begin{align*}
            Y_t = M_t + \int_0^t Y_{s-}\left( a_{J_s} + \frac{\sigma^2_{J_s}}{2} + \int_\R(e^u - 1)\nu_{J_{s-}}(du) + \int_\R\left( e^{u}-1-u\mathbbm{1}_{\{|u|\leq1\}} \right)\mu_{J_{s-}}(du) \right) ds,
        \end{align*}
        where $\{M_t;t\geq0\}$ is a martingale. However, by applying (\ref{eqn:lampKiuGen}) to $f:E\times\R:(\alpha,x)\rightarrow x$, we have
        \begin{align*}
            Y_t = M_t + \int_0^t (Af)(J_{s-},Y_{s-}) ds,
        \end{align*}
                    \if{}
                            Notice that the integrand on the last line of (\ref{eqn:SDErepresentationMAP}) includes the generator of the Lamperti-Kiu process applied to $f$. Since we have sufficient moments, by \cite[Section 4.3.2]{applebaum2004levy} the other integrals are  martingales. More precisely, there is a martingale $M$ with respect to $\mathcal{F}_t$, the natural filtration of $Y$, such that
                            \begin{equation*}
                                Y(t) - Y(0) = M(t) +  \int_0^t(Af)(Y_s,J_s)ds
                            \end{equation*}
                    \fi
        hence, $f\in\mathbb{D}(A)$. Then, since $Y$ is also a martingale,
        \begin{equation*}
            \int_0^t (Af)(J_s,Y_s)ds =  Y_s - M_s,
        \end{equation*}
        is  a martingale. Thus, for all $t>u>0$, first using the martingale property and then Fubini's theorem, we have
        \begin{align*}
            0 = \E\left[ \int_u^t(Af)(J_s,Y_s)ds \:\middle|\: \mathcal{F}_u \right]
            =\E_{J_u,\xi_u}\left[ \int_0^{t-u}(Af)(\hat{J}_s,\hat{Y}_s) ds \right]
            =\int_0^{t-u}\E_{J_u,\xi_u}\left[ (Af)(\hat{J}_s,\hat{Y}_s) \right]ds,
        \end{align*}
        where $(\hat{J},\hat{Y})$ is an independent but identically distributed copy of $(J,Y)$.
    However, from the definition of $f$ and since $(J,\log(Y))$ is a MAP, for all $(\sigma,a)\in\R^+\times E$, we have $(Af)(\sigma,a)= a(Af)(\sigma,1)$.
    Then, substituting this into the previous integral gives
    \begin{align*}
        0 &= \int_0^{t-u}\E_{J_u,\xi_u}[\hat{Y}_s(Af)(\hat{J}_s,1)] ds  ,
    \end{align*}
    for all $t>u>0$. By differentiating with respect to $t$ 
            \if{}
            we obtain
            \begin{equation*}
                0 = \E_{Y_u}[|\hat{Y}_{t-u}|(Af)(\sgn(\hat{Y}_{t-u}))] 
            \end{equation*}
            for all $t>u>0$. Thus for all $s>0$ 
            \fi 
    and setting $t=u+s$, we have, for $s,u>0$,
    \begin{equation*}
        0 = \E_{J_u,\xi_u}[\hat{Y}_{s}(Af)(\hat{J}_s,1)] \qquad\text{a.e.}.
    \end{equation*}
    
    To take the limit as $s\downarrow0$, we adapt part of the proof of \cite[Theorem 6(i)]{Rivero2011}. Splitting around the event $\{\hat{T}_1 > s\}$,
    \begin{align*}
        \E_{J_u,\xi_u}[\hat{Y}_{s}(Af)(\hat{J}_{s},1)] 
            = \E_{J_u,\xi_u}[\hat{Y}_{s}(Af)(\hat{J}_{s},1)\:|\: \hat{T}_1>s]\Prob_{J_u}(\hat{T}_1>s) 
            + \E_{J_u,\xi_u}[\hat{Y}_{s}(Af)(\hat{J}_{s},1)\:|\: \hat{T}_1\leq s]\Prob_{J_u}(\hat{T}_1\leq s) .
    \end{align*}
    Considering the first term,
    \begin{align*}
        \E_{J_u,\xi_u}[\hat{Y}_{s}(Af)(\hat{J}_s,1)\:|\: T_1>s]\Prob_{J_u}(T_1>s)
        &= \E_{J_u,\xi_u}[\hat{Y}_0\exp(\hat{\xi}^{(1)}_s)(Af)(J_u,1) ]e^{-sq^{(1)}}\\
        &= Y_u(Af)(J_u,1)\E[\exp(\hat{\xi}_1^{(1)})]^se^{-sq^{(1)}},
    \end{align*}
    where $q^{(1)}$ is the rate of the exponential distribution of the first sign change.
    Then, letting $s\downarrow0$, we obtain
    \begin{align*}
        \lim_{s\downarrow0}\E_{J_u,\xi_u}[\hat{Y}_s(Af)(\hat{J}_s,1)\:|\:\hat{T}_1>s]\Prob_{J_u}(\hat{T}_1>s) = Y_u(Af)(J_u,1).
    \end{align*}
    To see that the second term tends to 0 as $s\downarrow0$ recall that $Y$ is integrable and that $\Prob_{J_0}(T_1\leq s)=1-e^{-sq^{(1)}}\rightarrow 0$ as $s\downarrow0$. Then, we have the bound
    \begin{equation*}
        |Y_{s}(Af)(J_s,1)\mathbbm{1}_{\{T_1\leq s\}}|\leq|Y_{s}|\max_{\alpha\in E}|(Af)(\alpha,1)|    ,
    \end{equation*}
    which is integrable, since $E$ is finite. Moreover, because $Y_{s}(Af)(J_s,1)\mathbbm{1}_{\{T_1\leq s\}}\rightarrow0$ almost surely as $s\downarrow0$, the dominated convergence theorem yields
    \begin{align*}
        \lim_{s\downarrow0}\E_{J_u,\xi_u}[\hat{Y}_s(Af)(J_s,1); T_1\leq s] = 0 \qquad\text{a.s.}.
    \end{align*}
    Combining these results, we obtain
    \begin{align*}
        0 = \lim_{s\downarrow0}\E_{J_u,\xi_u}[\hat{Y}_s(Af)(\hat{J}_s,1)] = Y_u(Af)(J_u,1) \qquad\text{a.s.}
    \end{align*}
    and dividing by $Y_u$, which is non-zero, then gives
    $0 = (Af)(J_u,1)$ a.e..
    \if
    We now want to expand this and study it more closely. Note that since $Y_s\neq0$ for all $s\geq0$ we need only consider the cases where $Y_s$ is strictly positive or strictly negative. So a.e. we have
    \begin{align*}
        0=\E_{Y_u}\left[ |\hat{Y}_s|(Af)(\sgn(\hat{Y}_s)) \right]
        &= \E_{Y_u}\left[ |\hat{Y}_s|(Af)(\sgn(\hat{Y}_s)) \:|\: \hat{Y}_s>0 \right]\Prob_{Y_u}(\hat{Y}_s>0) \\&\qquad+\E_{Y_u}\left[ |\hat{Y}_s|(Af)(\sgn(\hat{Y}_s)) \:|\: \hat{Y}_s<0 \right]\Prob_{Y_u}(\hat{Y}_s<0)\\
        &= \E_{Y_u}\left[ \hat{Y}_s(Af)(1) \:|\: \hat{Y}_s>0 \right]\Prob_{Y_u}(\hat{Y}_s>0) \\&\qquad+\E_{Y_u}\left[ -\hat{Y}_s(Af)(-1) \:|\: \hat{Y}_s<0 \right]\Prob_{Y_u}(\hat{Y}_s<0)\\
        &= (Af)(1)\E_{Y_u}\left[ \hat{Y}_s \:|\: \hat{Y}_s\geq0 \right]\Prob_{Y_u}(\hat{Y}_s\geq0) \\&\qquad+(Af)(-1)\E_{Y_u}\left[ -\hat{Y}_s \:|\: \hat{Y}_s<0 \right]\Prob_{Y_u}(\hat{Y}_s<0).
    \end{align*}
    Taking the limit as $s\searrow 0$ the \cadlag property gives that $\hat{Y_s}\rightarrow Y_u$ under $\Prob_{Y_u}$. Then bounding $|Y_s|$ by $sup_{0\leq t\leq1}|Y_t|$ \textbf{(which is integrable with sufficient integrability of Y?)} for all $s\leq1$ we can apply dominated convergence to obtain
    \begin{align*}
        0 = (Af)(1)Y_u\mathbbm{1}_{Y_u>0} + (Af)(-1)|Y_u|\mathbbm{1}_{Y_u<0} \qquad\text{a.e.}
    \end{align*}
    \fi
    Since $J_u=\alpha$ with non-zero probability for any $u>0$, $\alpha\in E$, we have  $(Af)(\alpha,1)=0$, for all $\alpha\in E$.

    \textit{Necessity:}\newline
    Suppose that $f\in\mathbb{D}(A)$ and $(Af)(\alpha,1)=0$ for all $\alpha\in E$. Then, from equation (\ref{eqn:lampKiuGen}), for all $\alpha\in E$,
    \begin{align*}
      \left|a_{\alpha} + \frac{1}{2}\sigma^2_\alpha + \int_\R \left( e^u -1-u\mathbbm{1}_{|u|<1} \right) \mu_\alpha(du) \right|
      = |(\mathcal{L}^{(\alpha)}\exp)(\log(1))|
      < \infty 
    \end{align*}
    and
    \begin{align*}
      \left| q_{\alpha,\beta}\E\left[ 1-\exp(U_{\alpha,\beta}) \right]\right|
      <\infty.
    \end{align*}
    This implies that both $\xi^{(\alpha)}$ and $U_{\alpha,\beta}$ have exponential moments, thus by Theorem \ref{thm:intLampKiu}, $\E[Y_t]<\infty$ for all $t\geq0$.
    
    The assumption $(Af)(\alpha,1)=0$, for all $\alpha\in E$, combined with the multiplicative invariance property gives, for all $t>u>0$,
    \begin{align*}
        \int_u^t(Af)(J_s,Y_s)ds = \int_u^t Y_s(Af)(J_s,1) ds 
        = 0.
    \end{align*}
    
    Then, by the definition of the extended generator, we have that
    \begin{equation*}
        M(t) := Y_t - Y_0 - \int_0^t(Af)(J_s,1)ds = Y_t - Y_0,
    \end{equation*}
    is a martingale with respect to $\{\mathcal{F}_t\}_{t\geq0}$. Thus, $Y$ must also be a martingale.
    \end{proof}
    
    Using equation (\ref{eqn:lampKiuGen}) and the \levys-Khintchine formula to expand the generator of a MAP, the following condition for such a process to be a martingale can be found.
    
    \begin{cor}
        \label{cor:LKmg}
        The process $Y$ is a martingale if and only if, for all $\alpha\in E$,
        \begin{align}
          \label{eqn:martingalExpCondition1}
            \sum_{\beta\in E}q_{\alpha,\beta}\E\left[ \exp(U_{\alpha,\beta}) - 1 \right] = a_\alpha + \frac{1}{2}\sigma_\alpha^2 + \int_\R \left( \exp(y) -1 -\frac{y}{1+|y|} \mu_{\alpha}(dy) \right)
            <\infty,
        \end{align}
        or equivalently, for all $\alpha\in E$,
        \begin{equation}
            \label{eqn:martingaleExpCondition}
            \sum_{\beta\in E}q_{\alpha,\beta}\left( G_{\alpha,\beta}(1) - 1 \right) = \psi_\alpha(1) < \infty.
        \end{equation}
    \end{cor}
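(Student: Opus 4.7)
\textit{Proof proposal for Corollary \ref{cor:LKmg}.} The plan is to apply Theorem \ref{thm:mgCondLampKiu} and compute the expression $(Af)(\alpha,1)$ explicitly for the function $f(\alpha,x)=x$, then read off the two equivalent martingale conditions. By Theorem \ref{thm:mgCondLampKiu}, $Y$ is a martingale if and only if $f\in\mathbb{D}(A)$ and $(Af)(\alpha,1)=0$ for every $\alpha\in E$, so the task reduces to expanding the right-hand side of (\ref{eqn:lampKiuGen}) at $(\alpha,1)$.

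First, I would handle the \levy-process part. The inner function is $\exp\circ f(\alpha,\cdot):y\mapsto e^y$, so $\left(\mathcal{L}^{(\alpha)}\exp\right)(\log 1)=\left(\mathcal{L}^{(\alpha)}\exp\right)(0)$. Applying the \levy-Khintchine representation of the generator $\mathcal{L}^{(\alpha)}$ to the exponential function (which is a standard computation; see, e.g., \cite[Theorem 31.5]{sato1999levy}) yields
\begin{align*}
\left(\mathcal{L}^{(\alpha)}\exp\right)(0)
= a_\alpha+\tfrac12\sigma_\alpha^2+\int_{\R}\left(e^y-1-\tfrac{y}{1+|y|}\right)\mu_\alpha(dy),
\end{align*}
where the truncation $y/(1+|y|)$ is chosen to match the statement (the usual indicator truncation $y\mathbbm{1}_{|y|\leq 1}$ gives the same quantity modulo an absorbed constant in $a_\alpha$). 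For the jump-at-state-change part of (\ref{eqn:lampKiuGen}), one simply gets
\begin{align*}
\sum_{\beta\in E\setminus\{\alpha\}}q_{\alpha,\beta}\left(\E\!\left[\exp(U_{\alpha,\beta})\right]-1\right),
\end{align*}
since $f(\beta,x\exp(U_{\alpha,\beta}))=x\exp(U_{\alpha,\beta})$ and $f(\alpha,1)=1$. Setting the sum of these two pieces equal to zero and rearranging gives (\ref{eqn:martingalExpCondition1}).

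For the equivalence with (\ref{eqn:martingaleExpCondition}), I would just recall that $G_{\alpha,\beta}(1)=\E[\exp(U_{\alpha,\beta})]$ and that the \levy-Khintchine formula evaluated at $z=1$ gives $\psi_\alpha(1)=a_\alpha+\tfrac12\sigma_\alpha^2+\int(e^y-1-\tfrac{y}{1+|y|})\mu_\alpha(dy)$, so the two displayed equations are literally the same identity rewritten using the notation $\psi_\alpha$ and $G_{\alpha,\beta}$. The finiteness clauses in both statements are equivalent to the finiteness of either side, which by Theorem \ref{thm:intLampKiu} is already implied by the integrability of $Y$ established inside the proof of Theorem \ref{thm:mgCondLampKiu}.

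The only mildly subtle point is that Theorem \ref{thm:mgCondLampKiu} is stated in terms of the extended generator applied to $f(\alpha,x)=x$, which is unbounded, so one must verify that the formula (\ref{eqn:lampKiuGen}) is still valid for this $f$. This is guaranteed by the finiteness conditions in (\ref{eqn:martingalExpCondition1}): when they hold, Theorem \ref{thm:intLampKiu} gives $\E[Y_t]<\infty$ and all the integrals appearing in the generator expression converge absolutely, so the extended-generator formula applies. Conversely, if $f\in\mathbb{D}(A)$ then each term of $(Af)(\alpha,1)$ is finite by definition, which automatically gives the finiteness clauses. Thus I expect the argument to be essentially a direct calculation with no real obstacle beyond matching conventions for the truncation function.
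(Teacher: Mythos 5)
Your proposal is correct and follows essentially the same route as the paper: apply Theorem \ref{thm:mgCondLampKiu}, expand $(Af)(\alpha,1)$ via (\ref{eqn:lampKiuGen}) and the \levys-Khintchine formula, and use Theorem \ref{thm:intLampKiu} to identify the finiteness clauses with integrability of $Y$. Your explicit remark about reconciling the truncation functions $y/(1+|y|)$ versus $y\mathbbm{1}_{\{|y|\leq1\}}$ (a constant absorbed into $a_\alpha$) is a detail the paper's proof leaves implicit, but otherwise the arguments coincide.
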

    \begin{proof}
    By Theorem \ref{thm:mgCondLampKiu}, the process $Y$  is a martingale if and only if $f\in\mathbb{D}(A)$ and $(Af)(\alpha,1)=0$, for all $\alpha\in E$ when $f:\R^+\times E\rightarrow\R:(\sigma,x)\rightarrow x$ and $A$ is the (extended) generator of the pair $(J,Y)$.
    
    Suppose equation (\ref{eqn:martingaleExpCondition}) holds, then for each $\alpha\in E$, both $\E\left[\exp(\xi_1^{(\alpha)})\right]<\infty$ and $\E\left[\exp\left(U_\alpha\right)\right]<\infty$, hence by Theorem \ref{thm:intLampKiu}, $Y$ is integrable. Moreover, if $Y$ is integrable then Theorem \ref{thm:intLampKiu} gives the finiteness requirement of equation (\ref{eqn:martingaleExpCondition}). 
    
    \if{}
        First we show that the integrability of $Y$ is equivalent to the finiteness condition in the Corollary. Since $\E[\exp(\xi_1^i)]<\infty$ for $i\in\{+,-\}$ the \levy processes have finite exponential moments. Also  if $q^i\neq0$ then $\E[\exp(U^i)+1]<\infty$ is equivalent to exponential moments for the overshoots $U^\pm$. The cases $q^i=0$ correspond to the degenerate cases where $Y$ is an exponential \levy process and thus $U^i=0$. Hence it follows from Theorem \ref{thm:intLampKiu} that the integrability of $Y$ is equivalent to the finiteness conditions of the Corollary.
    \fi
    
    We now look at the expectation requirement for a martingale. From equation (\ref{eqn:lampKiuGen}), the (extended) generator of $(J,Y)$ applied to $f:\R^+\times E\rightarrow\R:(\alpha,x)\rightarrow x$ gives
    \begin{align*}
        (Af)(\alpha,x) = \mathcal{L}^{(\alpha)}(\exp)(\log x) + \sum_{\beta\in E}xq_{\alpha,\beta}\E\left[ \exp(U_{\alpha,\beta}) - 1 \right],
    \end{align*}
    where $\mathcal{L}^{(\alpha)}$ is the extended generator of the  \levy process $\xi^{(\alpha)}$. 
            \if{}
            Expanding out $\mathcal{L}^+(f\circ\sgn(x)\exp)$ from the \levys-Kintchin formula and defining $g$ as
            \begin{equation*}
                g := f\circ\sgn(x)\exp = \sgn(x)\exp
            \end{equation*}
            gives
            \begin{align*}
                &\mathcal{L}^+(g)(x) 
                =\sgn(x)\exp(x)\left( a^+ + \frac{1}{2}(\sigma^+)^2 + \int_\R \exp(y) - 1 - \frac{y}{1+|y|} \mu^+(dy) \right)
            \end{align*}
            then taking $x=\log(1)$ we obtain
            \fi
            \if{}
                Then, using the \levy-Kintchine formula we have
                \begin{equation*}
                    (\mathcal{L}^\alpha f\circ\exp)(\log(1)) =  a^\alpha + \frac{1}{2}(\sigma^\alpha)^2 + \int_\R \exp(y) - 1 - \frac{y}{1+|y|} \mu^\alpha(dy).
                \end{equation*}
            \fi
    Thus, the condition $(Af)(\alpha,1)=0$ is equivalent to
    \begin{equation*}
        \sum_{\beta\in E}q_{\alpha,\beta}\left(\E[\exp(U_{\alpha,\beta})]-1 \right) = a_\alpha + \frac{1}{2}\sigma_\alpha^2 + \int_\R \left(\exp(y) - 1 - y\mathbbm{1}_{\{|y|\leq 1\}}\right) \mu_\alpha(dy) = \psi_\alpha(1).
    \end{equation*}
    \end{proof}

        As an example, suppose that $X$ is a spectrally negative \levy process, so that the Laplace exponent, $\psi(z)$, is defined for all $z\in\R^+$. Let the characteristic triplet be $(a_X,\sigma_X,\mu_X)$. Then, by \cite[pp 82]{kyprianou2014fluctuations} the process $\{\xi_t\coloneqq X_t - \psi(1)t:\: t\geq0\}$ is also a \levy process  and has characteristic triplet $(a_\xi,\sigma_\xi,\mu_\xi)$ given by
        \begin{align*}
            a_\xi := a_X - \psi(1)t, 
            \qquad\sigma_\xi := \sigma_X
            \qquad\text{and}\qquad
            \mu_\xi := \mu_X.
        \end{align*}
        Moreover, from \cite[pp 82]{kyprianou2014fluctuations} we know that the process $\{Y_t:=\exp(\xi_t):t\geq0\}$ is a martingale and $(J,Y)$ is a MAP for any constant Markov chain $J$. Hence we can check the conditions of Corollary \ref{cor:LKmg} are satisfied. In particular, because the Markov chain $J$ is constant, the left hand side of (\ref{eqn:martingalExpCondition1}) is $0$. The right hand side is given by
        \begin{multline*}
            a_\xi + \frac{\sigma_\xi^2}{2} + \int_{-\infty}^0\left(\exp(y)-1-\frac{y}{1+|y|}\right)\mu_\xi(dy) \\
            =a_X - \left( a_X + \frac{\sigma^2_X}{2} + \int_{-\infty}^0\left(\exp(y) -1-\frac{y}{1+|y|}\right)\mu_X(dy) \right) \\
            + \frac{\sigma_X^2}{2} + \int_{-\infty}^0\left(\exp(y)-1-\frac{y}{1+|y|}\right)\mu_X(dy)
            =0
        \end{multline*}
        and therefore Corollary \ref{cor:LKmg} is satisfied.
    
    \if{}
        It is worth noting that $Y$ can't be a uniformly integrable martingale and so Doob's martingale convergence theorem \cite[Theorem 10, Chapter I]{protter2005stochastic} doesn't apply. Moreover, $Y$ also can't be a closed martingale \cite[Theorem 13, Chapter I]{protter2005stochastic}.    
        
        Suppose for contradiction that $Y_t$ was a uniformly integrable Lamperti-Kiu Martingale. Then we could apply the optional stopping theorem \cite[Theorem 2.13]{ethier1986markov} at the first time the process switches sign, $T_1$, and obtain
        $
            Y_0 = \E[Y_{T_1}]
        $
        however this is impossible since $Y_{T_0}$ has the opposite sign of $Y_0$. We can therefore conclude that being a Martingale or Uniformly Integrable are mutually exclusive conditions for a Lamperti-Kiu process.
        
        This can also be seen directly through calculation. Consider the case $Y_0>0$ and the case $Y_0<0$ is analogous. The Optional Stopping Theorem, as given in \cite[Theorem 2.13]{ethier1986markov}, has assumptions weaker than Uniform Integrability one of which is
        $
            \E\left[ |Y_{T_0}| \right] < \infty
        $
        however, through the tower property,
        \begin{align*}
            \E[|Y_{T_1}|] 
            = \E\left[\E[\exp(\xi^+_1)]^{T_1}\right]\E\left[ \exp(U_1) \right]
            =q_+\int_0^\infty \exp((\psi_+(1)-q_+)t)dt\E\left[ \exp(U_1) \right]
        \end{align*}    
        which is finite if and only if $\psi_+(1):=\log(\E[\exp(\xi^+_1)])<q_+$. However, this contradicts the conditions of the Corollary which are
        \begin{align*}
            \log(\E[\exp(\xi^+_1)]) = q^i\E[\exp(U^i)+1] > q^i
        \end{align*}
        thus a Lamperti-Kiu process is a martingale only if it isn't uniformly integrable.
        
    \fi 

    The martingale result can be extended to obtain sub/super martingale conditions for $Y$. For convenience, if $A$ is the (extended) generator of a process and $f:E\times\R\rightarrow\R:(\sigma,x)\rightarrow x$, we introduce the notation $A^{(\alpha)} := (Af)(\alpha,1)$, for all $\alpha\in E$.
    
    \begin{prop}[Sub/Super Martingale conditions for Lamperti-Kiu processes]
        \label{prop:subSuperMgLk}
        An exponential MAP, with (extended) generator $A$, is a sub-martingale if and only if $0\leq A^{(\alpha)} <\infty$, for all $\alpha\in E$, and is a super-martingale if and only if $-\infty<A^{(\alpha)}\leq0$, for all $\alpha\in E$.
    \end{prop}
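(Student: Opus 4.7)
The plan is to mirror the proof of Theorem \ref{thm:mgCondLampKiu}, exploiting that the integrand in the Dynkin decomposition factorises multiplicatively. First, I would note that for the function $f(\alpha,x)=x$, the identity $(Af)(\alpha,x)=xA^{(\alpha)}$ holds whenever $A^{(\alpha)}$ is finite, because the only place the spatial variable enters the expression (\ref{eqn:lampKiuGen}) is as a multiplicative factor (the MAP has additive increments in $\xi$, hence multiplicative in $Y$). Hence finiteness of $A^{(\alpha)}$ for all $\alpha$ is, via equation (\ref{eqn:martingaleExpCondition}), precisely the condition of Theorem \ref{thm:intLampKiu}(\ref{list:integrability:decomp}) and so equivalent to $Y$ being integrable; throughout the argument this will be the background hypothesis.

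For sufficiency, assume $0\leq A^{(\alpha)}<\infty$ for every $\alpha\in E$. Integrability gives the semi-martingale decomposition (\ref{eqn:YsemiMartingale}), which, upon collecting drift and compensator contributions as in the proof of Theorem \ref{thm:mgCondLampKiu}, yields
\begin{equation*}
    Y_t = Y_0 + M_t + \int_0^t Y_{s-}\,A^{(J_s)}\,ds,
\end{equation*}
where $M$ is a local martingale. Because $Y_{s-}>0$ and $A^{(J_s)}\geq0$, the finite-variation part is non-decreasing, so $Y$ is a local sub-martingale. By Lemma \ref{lem:noStrictLoclMG} (whose proof goes through unchanged for sub-martingales once integrability is known, since the dominating bound for $\sup_{t\leq T}Y_t$ did not use the martingale property), the local sub-martingale is a true sub-martingale. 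The super-martingale direction is identical with the signs reversed.

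For necessity, suppose $Y$ is a sub-martingale. In particular $Y$ is integrable, so $A^{(\alpha)}<\infty$ for each $\alpha$ by Theorem \ref{thm:intLampKiu} and Corollary \ref{cor:LKmg}. Writing the semi-martingale decomposition again, the process $\int_0^t Y_{s-}A^{(J_s)}\,ds=Y_t-Y_0-M_t$ is itself a sub-martingale (being the difference of a sub-martingale and a martingale). Hence for any $u<t$,
\begin{equation*}
    0 \leq \E\!\left[\int_u^t Y_{s-}A^{(J_s)}\,ds \,\middle|\, \mathcal{F}_u\right]
    = \int_0^{t-u} \E_{J_u,\xi_u}\!\left[\hat{Y}_s A^{(\hat{J}_s)}\right] ds
\end{equation*}
by Fubini and the Markov additive property, where $(\hat{J},\hat{Y})$ is an i.i.d.\ copy of $(J,Y)$. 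Differentiating in $t$ and taking $s\downarrow 0$ via the same splitting around $\{\hat{T}_1>s\}$ as in Theorem \ref{thm:mgCondLampKiu} (the bound is controlled by integrability of $Y$), one obtains $Y_u A^{(J_u)}\geq 0$ almost surely for all $u>0$. Since $Y_u>0$ and $\Prob(J_u=\alpha)>0$ for every $\alpha$ by irreducibility, this forces $A^{(\alpha)}\geq 0$ for all $\alpha\in E$. The super-martingale case follows by the same argument with the inequalities reversed.

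The main obstacle is transferring the limit $s\downarrow 0$ through the conditional expectation, but this is already handled in the martingale proof by splitting on $\{\hat{T}_1>s\}$ and invoking dominated convergence with the bound $|Y_s A^{(J_s)}|\leq Y_s\max_{\alpha\in E}|A^{(\alpha)}|$; that bound depends only on finiteness of $A^{(\alpha)}$ and integrability of $Y$, both of which we already have, so no additional work is needed.
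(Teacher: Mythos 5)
Your proof is correct and follows essentially the same route as the paper's: the semi-martingale decomposition combined with the multiplicative invariance $(Af)(\alpha,x)=xA^{(\alpha)}$, with finiteness of $A^{(\alpha)}$ identified with integrability of $Y$ via Theorem \ref{thm:intLampKiu}. The only cosmetic differences are that you pass through a local sub-martingale and upgrade it via an extension of Lemma \ref{lem:noStrictLoclMG}, where the paper instead asserts directly that the compensated process is a true martingale, and that you recover the sign of $A^{(\alpha)}$ by the splitting around $\{\hat{T}_1>s\}$ rather than by differentiating the integral at $t=s$; both variants are sound.
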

    \begin{proof}
        As for the martingale property in Theorem \ref{thm:mgCondLampKiu}, integrability in both cases is equivalent to $|A^{(\alpha)}|<\infty$ for all $\alpha\in E$. This is obtained by considering the decomposition of the (extended) generator given in equation (\ref{eqn:lampKiuGen}) and comparing it to Theorem \ref{thm:intLampKiu}.
        
        We now consider the expectation properties. First, suppose that $A^{(\alpha)}\geq 0$ for each $\alpha\in E$. Then, by the semi-martingale decomposition from \cite[Section 2.1, pp 10]{Doring2012JumpSDE}, used in the proof of Theorem \ref{thm:mgCondLampKiu},
        \begin{equation*}
            M_t := Y_t - Y_0 - \int_0^t(Af)(J_s,Y_s)ds,
        \end{equation*}
        is a martingale with respect to $\{\mathcal{F}_t\}_{t\geq0}$. For $0<s<t$, by letting $(\hat{J},\hat{Y})$ denote an independent but identically distributed copy of $(J,Y)$,
        \begin{align*}
           \E[Y_t\:|\:\mathcal{F}_s] - Y_s
           = \E\left[\int_s^t (Af)(J_u,Y_u) du \:\middle|\: \mathcal{F}_s\right]
           = \int_0^{t-s} \E_{J_s,Y_s}[(Af)(\hat{J}_u,\hat{Y}_u)] du.
        \end{align*}
                \if{}
                \begin{align*}
                    \E[Y_t\:|\:\mathcal{F}_s] 
                        &= \E[M_t\:|\:\mathcal{F}_s] + \E\left[ \int_0^t(Af)(Y_u,J_u)du \:\middle|\:\mathcal{F}_s\right] + Y_0\\
                        &= M_s + \int_0^s(Af)(Y_u)du + \E_{Y_s}\left[ \int_0^{t-s} (Af)(\hat{Y}_u,\hat{J}_u) du \right] + Y_0\\
                        &= Y_s + \E_{Y_s}\left[ \int_0^{t-s} (Af)(\hat{Y}_u) du \right]\\
                        &= Y_s +  \int_0^{t-s}\E_{Y_s}\left[ (Af)(\hat{Y}_u) \right] du
                \end{align*}
                \fi
        Applying multiplicative invariance to the generator of $(J,Y)$ gives
        \begin{align*}
            \int_0^{t-s}\E_{J_s,Y_s}\left[ (Af)(\hat{J}_u,\hat{Y}_u) \right]du
            = \int_0^{t-s}\E_{J_s,Y_s}\left[\hat{Y}_u(Af)(\hat{J}_u,1)\right] du
            \geq 0,
        \end{align*}
        where the last inequality is due to the assumption that $A^{(\alpha)}\geq0$ for all $\alpha\in E$. Hence $\E[Y_t\:|\:\mathcal{F}_s] \geq Y_s$ and so $Y$ is a sub-martingale.

        Conversely, suppose that $Y$ is a sub-martingale. Then, by integrability of sub-martingales, we still have that $|A^{(\alpha)}|<\infty$, for all $\alpha\in E$, and that
        \begin{equation*}
            M_t := Y_t - Y_0 - \int_0^t(Af)(J_s,Y_s)ds,
        \end{equation*}
        is a martingale.
        
        Now suppose $0<s<t$, then using the semi-martingale property followed by the Markov property and Fubini's theorem, we have
        \begin{align*}
          0   \leq \E[Y_t\:|\:\mathcal{F}_s] - Y_s 
          &= \E\left[\int_s^{t}(Af)(J_u,Y_u)du\right]
          = \int_0^{t-s}\E_{J_s,Y_s}\left[ \hat{Y}_u(Af)(\hat{J}_u,1) \right] du,
        \end{align*}        
        where $(\hat{J},\hat{Y})$ is an independent but identically distributed copy of $(J,Y)$. We can now apply standard calculus results, by differentiating the right hand-side with respect to $t$ and evaluating it at $t=s$, to get
        \begin{align*}
            0 \leq \E_{J_s,Y_s}\left[ \hat{Y}_0(Af)(\hat{J}_0,1) \right] = Y_s(Af)(J_s,1)
        \end{align*}
        and so $(Af)(J_s,1)\geq0$. Then, since $J$ is a continuous time irreducible Markov chain, for every $\alpha>0$ and $s>0$ the probability of the event $\{J_s=\alpha\}$ is non-zero, thus $A^{(\alpha)}\geq0$.
        
        The proof of the super-martingale case is similar.
    \end{proof}

    We can now adapt \cite[Chapter 5, Proposition 3.1]{gemanYor} to exponential MAP models. In both the sub-martingale and super-martingale cases we will make use of the conditions shown in Theorem \ref{thm:mgCondLampKiu}. For the sub-martingale case, the proof is then identical to \cite{gemanYor}, whilst the super-martingale case requires an adaptation.
    
    \begin{thm}[Price comparison of European and Asian call options]
        Suppose that the price process of the asset underlying an Asian call option in the equivalent martingale measure is given by $\{Y_t:t\geq0\}$. 
        Then, the following relations hold:
        \begin{enumerate}[(i)]
            \item if $A^{(\alpha)}>0$ for all $\alpha\in E$, then the Asian call option, $C^A_H$, is cheaper than the corresponding European call option, $C_H$, for any strike $k$;
            \item if $A^{(\alpha)}<0$ for all $\alpha\in E$, then there exists a $K>0$ such that the European call option, $C_H$, is cheaper than the Asian call option, $C_H^A$, for all strikes $k\leq K$.
        \end{enumerate}
    \end{thm}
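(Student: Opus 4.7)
The approach adapts the Geman--Yor argument of \cite[Chapter 5, Proposition 3.1]{gemanYor} to the MAP setting. Proposition \ref{prop:subSuperMgLk} converts the two sign conditions on $A^{(\alpha)}$ into sub-martingale and strict super-martingale properties of $Y$ under the risk-neutral measure, and the two parts of the theorem are then handled separately. Throughout, I will use the normalised Asian payoff $\bigl(\frac{1}{T-T_0}\int_{T_0}^T Y_s\,ds-k\bigr)^+$ when comparing to the European payoff $(Y_T-k)^+$.

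For part (i), assuming $Y$ is a sub-martingale, the key observation is that $x \mapsto (x-k)^+$ is convex and non-decreasing. Jensen's inequality applied to the uniform probability measure $ds/(T-T_0)$ on $[T_0, T]$ gives
\begin{align*}
\Bigl(\tfrac{1}{T-T_0}\textstyle\int_{T_0}^T Y_s\,ds - k\Bigr)^{+}
\leq \tfrac{1}{T-T_0}\textstyle\int_{T_0}^T (Y_s - k)^{+}\, ds.
\end{align*}
Taking expectations under $\Prob_\alpha$ and applying Fubini's theorem, I would then use that $\{(Y_s - k)^{+}:s\ge 0\}$ is itself a sub-martingale (as a convex, non-decreasing function of the sub-martingale $Y$) to obtain $\E_\alpha[(Y_s - k)^{+}] \leq \E_\alpha[(Y_T - k)^{+}]$ for every $s \leq T$. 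Integrating in $s$ over $[T_0,T]$, dividing by $T-T_0$ and multiplying by $e^{-rT}$ yields the desired inequality $C_H^A(\alpha,1,T) \leq C_H(\alpha,1,T)$ for arbitrary $k\ge 0$.

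For part (ii), with $Y$ strictly super-martingale, the first step from Proposition \ref{prop:subSuperMgLk} (via its proof) gives $\E_\alpha[Y_s] > \E_\alpha[Y_T]$ for all $s < T$. At $k = 0$ the positive parts disappear since $Y>0$ almost surely, and the difference between the Asian and European prices is
\begin{align*}
C_H^A(\alpha,1,T)\big|_{k=0} - C_H(\alpha,1,T)\big|_{k=0}
= e^{-rT}\left(\tfrac{1}{T-T_0}\int_{T_0}^T \E_\alpha[Y_s]\, ds - \E_\alpha[Y_T]\right) > 0.
\end{align*}
Both $k \mapsto C_H(\alpha,1,T)$ and $k \mapsto C_H^A(\alpha,1,T)$ are continuous on $[0,\infty)$ by dominated convergence: the European payoff is dominated by $Y_T$, whilst the Asian payoff is dominated by $\frac{1}{T-T_0}\int_{T_0}^T Y_s\,ds$, which has finite expectation by Fubini's theorem and Theorem \ref{thm:intLampKiu} (the moment assumption ensures $s\mapsto\E_\alpha[Y_s]$ is bounded on $[T_0,T]$). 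The strict gap at $k=0$ therefore persists on a neighbourhood $[0,K]$, proving the claim.

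The main obstacle is part (ii): the super-martingale case does not admit the Jensen-type argument used for (i), because $\{(Y_s-k)^{+}:s\ge 0\}$ is in general \emph{not} a super-martingale (convex, non-decreasing transformations preserve the sub-martingale property, not the super-martingale one). This is precisely why the ordering in (ii) is only available for small strikes: one recovers it by evaluating at $k=0$, where the positive parts trivialise and the strict super-martingale inequality applies directly, and then extends the conclusion to a neighbourhood by continuity.
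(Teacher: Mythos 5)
Your proposal is correct. Part (i) coincides with the paper's treatment: the paper simply defers to the argument of \cite[Chapter 5, Proposition 3.1(i)]{gemanYor}, which is exactly the combination of Jensen's inequality on the time average and the fact that $(Y_s-k)^+$ inherits the sub-martingale property that you spell out. Part (ii) reaches the same conclusion by a genuinely different route. The paper keeps $k$ general from the outset: it derives the strict inequality $\tfrac{1}{T}\int_0^T\E[Y_s]\,ds>\E[Y_T]=\E[(Y_T-k)^++k]\big|_{k=0}$, deduces that $k<\tfrac{1}{T}\int_0^T\E[Y_s]\,ds-\E[(Y_T-k)^+]$ for all $k\leq K$, and then applies Jensen's inequality to the Asian payoff in the reverse direction, $\E\bigl[(\tfrac1T\int_0^TY_s\,ds-k)^+\bigr]>\bigl(\E[\tfrac1T\int_0^TY_s\,ds]-k\bigr)^+$, before substituting the bound on $k$ and using monotonicity of $(\cdot)^+$. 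You instead establish the strict gap only at $k=0$, where the positive parts trivialise, and propagate it to a neighbourhood $[0,K]$ by continuity of both prices in the strike; this is slightly more elementary (indeed both prices are $1$-Lipschitz in $k$, so even dominated convergence is more than you need), at the cost of giving a purely existential $K$, whereas the paper's version characterises the admissible strikes by an explicit inequality. Your justification of the strict inequality $\E_\alpha[Y_s]>\E_\alpha[Y_T]$ via the integral representation in the proof of Proposition \ref{prop:subSuperMgLk} is the right way to get strictness from $A^{(\alpha)}<0$ (the super-martingale property alone would only give $\geq$, a point the paper glosses over), and your closing remark correctly identifies why the Jensen route of part (i) cannot be reused in part (ii).
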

    \begin{proof}
        To show (i) suppose that $A^{(\alpha)}>0$ for all $\alpha\in E$, so that, by Proposition \ref{prop:subSuperMgLk}, $Y$ is a sub-martingale. Then, the argument in the proof of \cite[Chapter 5, Proposition 3.1(i)]{gemanYor} can be followed exactly.
    
        In the case (ii), we follow the proof of \cite[Chapter 5, Proposition 3.2(ii)]{gemanYor}, but replace some of the explicit calculations for Brownian motion with an inequality.

        In this case, by Proposition \ref{prop:subSuperMgLk}, $Y$ is a super-martingale, hence  it must be integrable. Moreover, by the super-martingale property, for $0<s<T$, we have $\E[Y_s] > \E[Y_T]$, thus,
        \begin{equation*}
          \frac{1}{T}\int_0^T\E[Y_s] ds 
          > \E[Y_T].
        \end{equation*}
        However, $\E\left[ (Y_T-k)^+ +k\right]=\E[Y_T]$ when $k=0$. Then, since $Y_T>0$ for all $T\geq0$,
        there exists $K\in\R^+$, such that for all $k\leq K$
        \begin{equation*}
            \E[(Y_T-k)^++k] < \frac{1}{T}\int_0^T\E[Y_s]ds.
        \end{equation*}
        So for all $k\leq K$,
        \begin{equation}
           \label{eqn:kinequality1}
          k < \frac{1}{T}\int_0^T\E[Y_s]ds - \E[(Y_T-k)^+].
        \end{equation}
        We now  consider $k\leq K$. Then, by the convexity of $(\cdot)^+$, we may apply Jensen's inequality to obtain
        \begin{align*}
          \E\left[ \left( \frac{1}{T}\int_0^T Y_s ds - k\right)^+ \right]
          &> \E\left[ \frac{1}{T}\int_0^T Y_s ds - k \right]^+
            =\left( \E\left[\frac{1}{T}\int_0^TY_sds \right] - k \right)^+.
        \end{align*}
        However, since $(\cdot)^+$ is monotonic, substituting (\ref{eqn:kinequality1}) gives
        \begin{align*}
          \E\left[ \left( \frac{1}{T}\int_0^T Y_s ds - k\right)^+ \right]
          &>  \left( \E\left[\frac{1}{T}\int_0^T Y_s ds \right] - \E\left[ \frac{1}{T}\int_0^T Y_s ds \right] + \E\left[ (Y_T - k)^+ \right] \right)^+ \\
          &= \E\left[ (Y_T-k)^+ \right].
        \end{align*}
        Hence, the European option is cheaper than the corresponding Asian option.
    \end{proof}

        \if{}
        \section{Appendix}
        These are results which I don't believe are new but I have been unable to find a good reference for. Proofs are included for completeness.
        
        \begin{lem}
            Suppose $X$ is a \levy process taking values in $\R$ with Laplace exponent $\psi(z):=\log(\E[\exp(zX_1)])$. Then for any $R\in\R$ such that $\psi(R)<\infty$ and $u\in\R$ the function $\psi(R-iu)=\mathcal{O}(u^2)$  as $|u|\rightarrow\infty$ and is bounded.
        \end{lem}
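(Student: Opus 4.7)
The plan is to apply the L\'evy--Khintchine formula to write $\psi(R-iu)$ explicitly and then estimate each of its three components (drift, Gaussian, jump). If $(a,\sigma^2,\mu)$ is the characteristic triplet of $X$, then for every $z\in\C$ for which the integral below converges,
\begin{align*}
\psi(z) = az + \tfrac{1}{2}\sigma^2 z^2 + \int_{\R}\left(e^{zy}-1-zy\indicator{|y|\leq 1}\right)\mu(dy).
\end{align*}
The hypothesis $\psi(R)<\infty$ together with standard L\'evy-process theory (\cite[Chapter~5, Theorem~25.3]{sato1999levy}) gives $\int_{|y|>1}e^{Ry}\mu(dy)<\infty$, and hence the formula above is valid at $z=R-iu$ for every $u\in\R$. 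The drift/Gaussian part $a(R-iu)+\tfrac{1}{2}\sigma^2(R-iu)^2$ is a polynomial in $u$ of degree $2$, so it is trivially $\mathcal{O}(u^2)$ and locally bounded.

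The work is in the jump integral, which I would split into $|y|>1$ and $|y|\leq 1$. On $\{|y|>1\}$ the integrand is bounded in absolute value by $e^{Ry}+1$, independent of $u$, and the assumption $\psi(R)<\infty$ makes this $\mu$-integrable; so that piece contributes an $\mathcal{O}(1)$ term. On $\{|y|\leq 1\}$ the main point is to keep the $u$-dependence at most quadratic. Write
\begin{align*}
e^{(R-iu)y}-1-(R-iu)y = \bigl(e^{Ry}-1-Ry\bigr) + e^{Ry}\bigl(e^{-iuy}-1+iuy\bigr) + iuy\bigl(1-e^{Ry}\bigr).
\end{align*}
Using the elementary inequalities $|e^{Ry}-1-Ry|\leq C_R y^2$, $|1-e^{Ry}|\leq C_R|y|$, and $|e^{-iuy}-1+iuy|\leq \tfrac{1}{2}u^2y^2$ valid for $|y|\leq 1$, the absolute value of the integrand is bounded by $C_R(1+u^2)y^2$. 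Since $\int_{|y|\leq 1}y^2\,\mu(dy)<\infty$ by the definition of a L\'evy measure, the small-jump integral is $\mathcal{O}(u^2)$, and the proof is assembled by summing the three estimates.

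The local boundedness (for $u$ in compact subsets of $\R$) follows from continuity of $u\mapsto \psi(R-iu)$, which is obtained by dominated convergence against the same dominating functions used above. The main potential pitfall is ensuring the small-jump estimate stays quadratic in $u$: a naive Taylor bound $|e^{zy}-1-zy|\leq |zy|^2 e^{|zy|}/2$ would introduce an $e^{|u|}$ factor, which is why the decomposition above splitting off $e^{-iuy}-1+iuy$ (whose second-order Taylor remainder has no exponential cost because $|e^{-iuy}|=1$) is needed.
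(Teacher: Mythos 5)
Your proof is correct, but it takes a different route from the one in the paper. The paper's argument splits the jump integral at the $u$-dependent threshold $|x|\leq u^{-1}$: on the small-jump piece it expands the exponential as a power series and uses $\int_{[-1,1]}x^2\,\mu(dx)<\infty$, while on the large-jump piece it rescales via $v=ux$ and bounds the integrand crudely by $2+|v|/u$, obtaining an $\mathcal{O}(1)$ contribution. Your argument instead keeps the fixed cutoff $|y|\leq 1$ and isolates the $u$-dependence through the algebraic identity
\begin{align*}
e^{(R-iu)y}-1-(R-iu)y = \bigl(e^{Ry}-1-Ry\bigr) + e^{Ry}\bigl(e^{-iuy}-1+iuy\bigr) + iuy\bigl(1-e^{Ry}\bigr),
\end{align*}
exploiting that the purely oscillatory factor satisfies $|e^{-iuy}-1+iuy|\leq \tfrac{1}{2}u^2y^2$ with no exponential cost. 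What your approach buys is a cleaner and more robust estimate: it handles the real part $R$ uniformly throughout (the paper's written proof effectively reverts to estimating $\psi(iu)$ in its large-jump step and in its final display, leaving the interaction between $R$ and the cutoff implicit), and it avoids the rescaling $v=ux$ of the L\'evy measure, which requires care to interpret for a general $\mu$. The price is the need to verify the three elementary inequalities and the identity, which you do. Your observation about why a naive Taylor remainder bound would fail (introducing an $e^{|u|}$ factor) is exactly the right point, and it is the same obstruction that forces the paper to choose its $u$-dependent cutoff. Both proofs are valid; yours is the tidier of the two.
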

        
        \begin{proof}
            For $u>1$ by the change of variables $v=ux$
            \begin{align*}
                \int_{-u^{-1}}^{u^{-1}}\left( e^{(R+ui)x} -1 - (R+ui)x \right) \mu(dx)
                &=\int_{-u^{-1}}^{u^{-1}}\left( \sum_{k=0}^\infty \frac{(R+iu)^nx^n}{n!} - 1 - (R+iu)x \right)\mu(dx)\\
                &=\sum_{k=2}^\infty \frac{(R+iu)^n}{n!}\int_{-u^{-1}}^{u^{-1}} x^n\mu(dx)
            \end{align*}
            but by the assumptions on the restrictions on the \levy measure $\int_{-1}^1x^2\mu(dx)<\infty$ and $[-u^{-1},u^{-1}]\subset[-1,1]$ hence
            \begin{align*}
                \left| \int_{-u^{-1}}^{u^{-1}}\left( e^{(R+ui)x} -1 - (R+ui)x \right) \mu(dx) \right|
                \leq \sum_{k=2}^\infty \frac{(R+iu)^n}{n!}\int_{-1}^1x^2\mu(dx)
            \end{align*}
            for some constant $C_1>0$. Similarly there exists a $C_2>0$ such that the result holds for $u<-1$. 
            
            We also have for $u>1$ by the change of variables $v=ux$
            \begin{align*}
                \left|\int_{\R\setminus[-u^{-1},u^{-1}]}\left(e^{iux}-1-iux\mathbbm{1}_{\{|x|\leq1\}}\right) \mu(dx)\right|
                &=u^{-1}\left|\int_{\R\setminus[-1,1]}\left(e^{iv}-1-iv\mathbbm{1}_{\{|v|\leq u\}}\right)\mu(dv)\right|\\
                &\leq u^{-1}\int_{\R\setminus[-1,1]}2\mu(dv) + u^{-1}\int_{[-u,u]\setminus[-1,1]}|v|\mu(dv)\\
                &\leq 2u^{-1}\mu(\R\setminus[-1,1]) + \mu(\R\setminus[-1,1])\\
                &\leq C_3<\infty
            \end{align*}
            for some finite constant $C_3>0$ and again the same result holds for $u<-1$. 
            
            Combining these two results we have for all $u\in\R$ with $|u|>1$
            \begin{align*}
                |\psi(iu)| \leq |au| + \frac{\sigma^2}{2}u^2 + C_1|u|^{-1} + C_3 = \mathcal{O}(u^2)
                \qquad\text{as}\qquad
                |u|\rightarrow\infty.
            \end{align*}
            
        \end{proof}
        \fi

    \if{}
            \section{To do}
            \begin{enumerate}
                \item Does the condition $\psi(R-iu)=\mathcal{O}(u^n)$ hold automatically?
                \item \st{Is an equivalent martingale measure transform needed?}
                \item Does the PIDE give insight into hedging?
            \end{enumerate}
    \fi

    \appendix
    \section{Appendix}

    \subsection{Special Functions}
    \label{app:specialFuncs}
    The \textit{modified Bessel function of the first kind} with parameter $1$ is denoted by $I_1$ and has the series representation
    \begin{align*}
      I_1(x)
      \coloneqq \sum_{n=0}^\infty \frac{1}{n!(n+1)!}\left(\frac{x}{2}\right)^{2n+1}.
    \end{align*}
    It is related to the \textit{standard Bessel function of the first kind}, $J_\alpha$, by the relation $I_1(x)=i^{-1}J_1(ix)$.

    The \textit{generalised Hypergeometric function} is given by the series
    \begin{align*}
      \pFq{p}{q}(a_1,\cdots,a_p; b_1,\cdots,b_q; x)
      \coloneqq \sum_{n=0}^\infty \frac{(a_1)_n\cdots(a_p)_n x^n}{(b_1)_n,\cdots(b_q)_n n!},
    \end{align*}
    for $z\in\C$, $p,q\in\N$, where $(a)_n$ denotes the Pochhammer symbol and is defined by
    \begin{align*}
      (a)_n \coloneqq \begin{cases}
        1  &\text{ if } n=0;\\
        a(a+1)\cdots(a+n-1) &\text{ if } n\geq1.
      \end{cases}
    \end{align*}

    \subsection{Mellin Inversion}
    \label{app:mellinInv}
    In Example \ref{eg:compoundPoissonExpJump}, we consider the following Mellin inversion for $\alpha\in E$ and $k>0$:

    \begin{align*}
      C_\alpha(k) 
      &= \sum_{\beta\in E}\frac{1}{2\pi i}\int_{c+i\R} \frac{k^{-s}}{s(s+1)}\left(e^{TF(s+1)}\right)_{\alpha,\beta}ds
        = \sum_{\beta\in E}\frac{1}{2\pi i}\int_{c+1+i\R} \frac{k^{-(s-1)}}{(s-1)s}\left(e^{TF(s)}\right)_{\alpha,\beta}ds\\
      &= \frac{1}{2\pi i}\int_{c+1+i\R} \frac{k^{-s+1}}{(s-1)s} \frac{2e^{-2qt}G_\alpha e^{qt(G_++G_-)}}{(G_++G_-)}ds + \frac{1}{2\pi i}\int_{c+1+i\R}\frac{k^{-s+1}e^{-2qt}}{s(s-1)} \left( \frac{G_{-\alpha} - G_{\alpha}}{G_++G_-} \right) ds,
    \end{align*}
    for $c\in(0,\min(\lambda_+,\lambda_-))$. As an intermediary step, for $c\in(1,\min(\lambda_+,\lambda_-))$, we consider
    \begin{align*}
      D^{c}_{\alpha}(k) \coloneqq \frac{1}{2\pi i}\int_{c+1+i\R}\frac{k^{-(s-1)}}{(s-1)s}\frac{e^{-2qt}}{(G_++G_-)}G_\alpha ds
      =\frac{1}{2\pi i}\int_{c+i\R}\frac{k^{-(s-1)}}{(s-1)s}e^{-2qt}\frac{\lambda_\alpha(\lambda_{-\alpha}-s)}{2\lambda_+\lambda_- - s(\lambda_++\lambda_-)} ds,
    \end{align*}
    where we have substituted $G_\alpha(z)=\lambda_\alpha/(\lambda_\alpha - z)$ for $z<\lambda_\alpha$ and $\alpha\in E$.
    
    The poles of the integrand are at $0$, $1$ and  $s^*\coloneqq 2\lambda_+\lambda_-/(\lambda_++\lambda_-)$. For $c\in(1,\min(\lambda_+,\lambda_-))$, we have $0,1<c$ and $s^*>c$ and
    \begin{align*}
      \text{Res}(1) &= \frac{k^{-(1-1)}}{1}e^{-2qt}\frac{\lambda_\alpha(\lambda_{-\alpha}-1)}{2\lambda_+\lambda_- - 1(\lambda_++\lambda_-)}
                      = e^{-2qt}\frac{\lambda_\alpha(\lambda_{-\alpha}-1)}{2\lambda_+\lambda_- - (\lambda_++\lambda_-)} ,\\
      \text{Res}(0) &= \frac{k^{-(0-1)}}{(0-1)}e^{-2qt}\frac{\lambda_\alpha(\lambda_{-\alpha}-0)}{2\lambda_+\lambda_- - 0(\lambda_++\lambda_-)}
                      = -\frac{1}{2}ke^{-2qt},\\
      \text{Res}(s^*) &=\frac{k^{-(s^*-1)}e^{-2qt}(\lambda_{\alpha}-\lambda_{-\alpha})}{2\left(2\lambda_+\lambda_- - (\lambda_+ + \lambda_-)\right)}.
    \end{align*}
    It is clear that $\lim_{c\rightarrow\infty}D^c_\alpha(k)=0$  when $k\geq1$ and $\lim_{c\rightarrow-\infty}D^c_\alpha(k)=0$ when $k<1$, thus, by Cauchy's residue theorem,
    \begin{align}
      \label{eqn:DalphaCases}
      D_\alpha(k) = \begin{cases}
        -\frac{1}{2}ke^{-2qt} + e^{-2qt}\frac{\lambda_i(\lambda_{-i}-1)}{2\lambda_+\lambda_- - (\lambda_++\lambda_-)}, & \text{ if } k<1;
        \\ \frac{k^{-(s^*-1)}e^{-2qt}(\lambda_{\alpha}-\lambda_{-\alpha})}{2\left(2\lambda_+\lambda_- - (\lambda_+ + \lambda_-)\right)}, &\text{ if } k\geq1.
      \end{cases}
    \end{align}

    Moreover, from a direct calculation we see that if $k>1$,
    \begin{align}
      \label{eqn:DalphadiffK}
      D_{-\alpha}(k) - D_\alpha(k) =
      \begin{cases}
        \frac{e^{-2qt}(\lambda_{-\alpha} - \lambda_{\alpha})}{2\lambda_+\lambda_- - (\lambda_+ + \lambda_-)}, &\text{ if } k<1;\\
        \frac{2k^{-s^*+1}e^{-2qt}(\lambda_{-\alpha} - \lambda_\alpha)}{2\lambda_+\lambda_- - (\lambda_++\lambda_-)}, & \text{ if } k\geq1.
      \end{cases}
    \end{align}

    We now consider the terms involving $e^{qt(G_++G_-)}$. Let $I_1$ be the modified Bessel function of the first kind of order 1. Then, for $c\in\R$ and $x<0$, from the series expansion of $I_1$, we have that
    \begin{align*}
      \mathcal{M}_k \left\{  \mathbbm{1}_{\{k\geq1\}}\frac{I_1\left(c\sqrt{\log(k)}\right)}{\sqrt{\log(k)}} \right\}(x) = \sum_{m=0}^\infty \frac{c^{2m+1}}{m!(m+1)!2^{2m+1}}\int_{1}^\infty \frac{k^{x-1}\log(k)^{m+\frac{1}{2}}}{\log(k)^{\frac{1}{2}}} dk.
    \end{align*}
    Then, consider the change of variables $y=\log(k)$ to obtain
    \begin{align*}
      \int_1^\infty k^{x-1}\log(k)^m dk
      = \int_{0}^{\infty}e^{yx}y^mdy
      =\frac{m!}{(-x)^{m+!}}.
    \end{align*}
    \if{}
    By using integration by parts when $x<0$ we have
    \begin{align*}
      J_m(x) = \left[ \frac{y^m e^{xy}}{x} \right]^{y=\infty}_{y=0}
      - \int_0^\infty \frac{m}{x}e^{xy}y^{m-1}dy = -\frac{m}{x}J_{m-1}(x),
    \end{align*} 
    and since
    \begin{align*}
      J_0(x) = \int_0^\infty e^{yx} dy = \frac{1}{x},
    \end{align*}
    by induction we have that
    \begin{align*}
      J_m(x) = \frac{m!}{(-x)^{m+1}}.
    \end{align*}
    \fi
    Hence, for $x<0$,
    \begin{align*}
      \mathcal{M}_k  \left\{\mathbbm{1}_{\{k\geq1\}}\frac{I_1\left(c\sqrt{\log(k)}\right)}{\sqrt{\log(k)}} \right\}(x) 
      &= \sum_{m=0}^\infty \frac{c^{2m+1}}{m!(m+1)!2^{2m+1}}\frac{m!}{(-x)^{m+1}} 
      =\frac{2}{c}\left(\exp\left( -\frac{c^2}{4x} \right)-1\right).
    \end{align*}        
    We now set $c\coloneqq2\sqrt{qt\lambda_\alpha}$ and consider
    \begin{align*}
      R_\alpha(k) \coloneqq \delta_{1}(k) +  \sqrt{qt\lambda_\alpha}\mathbbm{1}_{\{k\geq1\}}k^{-\lambda_\alpha}\frac{I_1\left(2\sqrt{qt\lambda_\alpha\log(k)}\right)}{\sqrt{\log(k)}},
    \end{align*}
    where $\delta_k$ denotes the Dirac delta distribution. Then, by the shift rule for the Mellin transform,  we have that, for $x<\lambda_\alpha$,
    \begin{align*}
      \left\{\mathcal{M}R_\alpha\right\}(x) = \exp\left( \frac{qt\lambda_\alpha}{\lambda_\alpha - x} \right).
    \end{align*}
    Moreover, notice that $e^{-qt}R_\alpha(e^x)$ is the probability density of a compound Poisson process of rate $q$, with exponential jumps of rate $\lambda_\alpha$, at time $t$. Hence, $e^{-qt}R_\alpha(e^x)$ is the density of $\xi^\alpha_t$ for all $t>0$. 
    
    By letting $*$ denote the Mellin type convolution, we have that 
    \begin{equation*}
      \mathcal{M}\left\{ D_\alpha * R_+ * R_- \right\}(s) = \frac{ke^{-2qt}}{s(s-1)}\frac{G_\alpha}{(G_++G_-)}\exp\left( \frac{qt\lambda_+}{\lambda_+ - x}
        + \frac{qt\lambda_-}{\lambda_- - x} \right). 
    \end{equation*}
            Hence, we have
        \begin{align*}
            \left\{\mathcal{M}_kC_\alpha(k)\right\}(s) = 2\mathcal{M}\left\{ D_\alpha * R_+ * R_- \right\}(s) + \{\mathcal{M}D_{-\alpha}\}(s) - \{\mathcal{M}D_{\alpha}\}(s),
        \end{align*}
        that is,
        \begin{align}
            C_\alpha(k) = 2\{ D_\alpha * R_+ * R_- \}(k) + D_{-\alpha}(k) - D_\alpha(k),
        \end{align}
        where an explicit expression for $D_{-\alpha}(k)-D_\alpha(k)$ is given in (\ref{eqn:DalphadiffK}).

        \subsection{Series Expansion}
        \label{app:seriesExpansion}

           To continue with Example \ref{eg:compoundPoissonExpJump} it useful to express the triple convolution (\ref{eqn:mellinCovolutionExpression}) as a triple infinite series via the following calculations.

        For $\alpha\in E$, define the function
        \begin{align}
          \label{eqn:mellinExJdefn}
          J_\alpha(k) \coloneqq \sqrt{qt\lambda_\alpha}k^{-\lambda_\alpha}\frac{I_1\left(2\sqrt{qt\lambda_\alpha\log(k)}\right)}{\sqrt{\log(k)}},
        \end{align}
        so that $R_\alpha = \delta_1(k) + \mathbbm{1}_{\{k\geq1\}}J_\alpha(k)$. Then, first consider the multiplicative convolution $R_+*R_-$, which is given by,
        \begin{align*}
          (R_+*R_-)(k)
          &= \int_0^\infty R_+(x)R_-\left(\frac{k}{x}\right)\frac{1}{x}dx\\
          &= \delta_1(k) + \mathbbm{1}_{\{k\geq1\}}J_-\left(k\right) + \mathbbm{1}_{\{k\geq1\}}\frac{1}{k}J_+(k) + \int_1^kJ_+(x)J_-\left(\frac{k}{x}\right)\frac{1}{x}dx.
        \end{align*}
        Now consider the final integral in more detail. Making use of the definition of $J_\alpha$ from (\ref{eqn:mellinExJdefn}), for $k>1$,
        \begin{align*}
          \int_1^kJ_+(x)J_-\left(\frac{k}{x}\right)\frac{1}{x}dx
          &= qt\sqrt{\lambda_+\lambda_-}\int_1^k \frac{x^{-\lambda_+}I_1(2\sqrt{qt\log(x)})}{\sqrt{\log(x)}} \left(\frac{k}{x}\right)^{-\lambda_-}\frac{I_1(2\sqrt{qt\log(k/x)})}{\sqrt{\log(k/x)}}  \frac{1}{x}dx\\  &=qt\sqrt{\lambda_+\lambda_-}k^{-\lambda_-}\int_0^{\log(k)}e^{(\lambda_- - \lambda_+)z } \frac{I_1\left(2\sqrt{qtz}\right)I_1\left(2\sqrt{qt}\sqrt{\log(k)-z}\right)}{\sqrt{\log(k)-z}}dz,
        \end{align*}
        where the second equality is obtained via the subsitituion $z=\log(x)$. 
        From the series expansions of the modified Bessel function $I_1$ and the exponential, it follows that
        \begin{align*}
          &\int_1^kJ_+(x)J_-\left(\frac{k}{x}\right)\frac{1}{x}dx\\
          & =  qt\sqrt{\lambda_+\lambda_-}k^{-\lambda_-}\sum_{n=0}^\infty\sum_{m=0}^\infty\sum_{r=0}^\infty \frac{(qt)^{m+n+1}(\lambda_--\lambda_+)^r}{m!(m+1)!n!(n+1)!r!}\int_0^{\log(k)}z^{m+r}\left(\log(k)-z\right)^n dz.
        \end{align*}
        Then, by using integration by parts for $n>0$,
        \begin{align*}
          \int_0^{\log(k)}z^{m+r}\left(\log(k)-z\right)^ndz
          &= \frac{n!(r+m)!}{(r+m+n+1)!}\log(k)^{r+m+n+1},
        \end{align*}
        and so,
        \begin{align*}
          \int_1^kJ_+(x)J_-\left(\frac{k}{x}\right)\frac{1}{x}dx
          =  \sum_{n=0}^\infty\sum_{m=0}^\infty\sum_{r=0}^\infty a_{n,m,r}(q,t,\lambda_-,\lambda_+)k^{-\lambda_-}\log(k)^{r+m+n+1},
        \end{align*}
        where
        \begin{align*}
          a_{n,m,r}(q,t,\lambda_-,\lambda_+)
          \coloneqq \frac{(qt)^{m+n+2}\sqrt{\lambda_+\lambda_-}(\lambda_--\lambda_+)^r(r+m)!}{m!(m+1)!(n+1)!r!(r+m+n+1)!}.
        \end{align*}
        
        Now consider the convolution of this series with $D_\alpha$. From (\ref{eqn:DalphaCases}) there exist constants $d_1^\alpha,d_2^\alpha,d_3^\alpha$ and $c_\alpha$, such that 
        \begin{align*}
          D_\alpha(k) =
          \begin{cases}
            d_1^\alpha k + d_2^\alpha &\text{ if } k<1;\\
            d_3^\alpha k^{c_\alpha} &\text{ if } k\geq1.
          \end{cases}
        \end{align*}
        Hence, 
        \begin{align*}
          &\left(D_\alpha * J_+ * J_- \right)(k)
            = \sum_{n=0}^\infty\sum_{m=0}^\infty\sum_{r=0}^\infty a_{n,m,r}(q,t,\lambda_-,\lambda_+) \\
          &\times\left(\int_0^1 \left(\frac{k}{x}\right)^{-\lambda_-}\log\left(\frac{k}{x}\right)^{r+m+n+1}\left(d_1^\alpha x+d_2^\alpha\right)\frac{1}{x}dx
            + \int_1^\infty \left(\frac{k}{x}\right)^{-\lambda_-}\log\left(\frac{k}{x}\right)^{r+m+n+1}d_3^\alpha x^{c_\alpha}\frac{1}{x}dx\right).
        \end{align*}
        Now consider the following integrals, for $l\in(0,\infty)$, and make the substitution $t=(a+1)\log(k/x)$ to obtain
        \begin{align*}
          \int_0^l x^a \log\left(\frac{k}{x}\right)^N dx
          &= \frac{k^{a+1}}{(a+1)^{N+1}}\int_{(a+1)\log(k/l)}^\infty e^{-t}t^N dt
            = \frac{k^{a+1}}{(a+1)^{N+1}}\Gamma\left(N+1,(a+1)\log\left(\frac{k}{l}\right)\right),\\
          \int_l^\infty x^a \log\left(\frac{k}{x}\right)^N dx
          &= \frac{k^{a+1}}{(a+1)^{N+1}}\int_0^{(a+1)\log(k/l)}e^{-t}t^N dt
            = \frac{k^{a+1}}{(a+1)^{N+1}}\gamma\left(N+1,(a+1)\log\left(\frac{k}{l}\right)\right),
        \end{align*}
        where $\Gamma(\cdot,\cdot)$ and $\gamma(\cdot,\cdot)$ are the \textit{upper} and \textit{lower incomplete Gamma functions}, respectively.
        Thus, it follows that
        \begin{align*}
          \left(D_\alpha * J_+ * J_- \right)(k)
          = \sum_{n=0}^\infty\sum_{m=0}^\infty\sum_{r=0}^\infty \left( 
            g_1(n,m,r,\lambda_-,k)
            + g_2(n,m,r,\lambda_-,k)
            + g_3(n,m,r,\lambda_-,k)
          \right),
        \end{align*}
        where,
        \begin{align*}
          g_1(n,m,r,\lambda_-,k) &\coloneqq a_{n,m,r}(q,t,\lambda_+,\lambda_-)\frac{d_1^\alpha k}{(\lambda_- +1)^{r+m+n+2}}\Gamma(r+m+n+2,(\lambda_-+1)\log(k)),\\
          g_2(n,m,r,\lambda_-,k) &\coloneqq             a_{n,m,r}(q,t,\lambda_+,\lambda_-)\frac{d_2^\alpha}{\lambda_-^{r+m+n+2}}\Gamma(r+m+n+2,\lambda_-\log(k)),\\
          g_3(n,m,r,\lambda_-,k) &\coloneqq a_{n,m,r}(q,t,\lambda_+,\lambda_-) \frac{d_3^\alpha k^{c_\alpha}}{(c_\alpha+\lambda_-)^{r+m+n+2}}\gamma(r+m+n+2,(c_\alpha+\lambda_-)\log(k)).
        \end{align*}
        Now compute the multiplicative convolution
        \begin{align*}
          \left\{\left\{\mathbbm{1}_{\{x\geq1\}}\frac{1}{x}J_\beta(x)\right\}*D_\alpha(x)\right\}(k)
          &= \int_0^\infty D_\alpha(x)\mathbbm{1}_{\{k\geq x\}}\frac{x}{k}J_\beta\left(\frac{k}{x}\right)\frac{1}{x} dx\\
          &= \frac{1}{k}\int_0^{k\wedge1}\left(d_1^\alpha x + d_2^\alpha\right)J_+\left(\frac{k}{x}\right)dx + \frac{1}{k}\int_{k\wedge1}^k d_3^\alpha x^{c_\alpha}J_\beta\left(\frac{k}{x}\right)dx.
        \end{align*}
        Consider the general integral
        \begin{align*}
          \int_l^u x^aJ_\beta\left(\frac{k}{x}\right)dx
          &= \sqrt{qt\lambda_\beta}k^{-\lambda_\beta}\int_l^ux^{a+\lambda_\beta}\frac{I_1\left(2\sqrt{qt\lambda_\beta\log(k/x)}\right)}{\sqrt{\log(k/x)}}dx\\
          &= qt\lambda_\beta\sum_{m=0}^\infty\frac{k^{-\lambda_\beta}}{m!(m+1)!}\int_l^u x^{\lambda_\beta+a}\log\left(\frac{k}{x}\right)^m dx.
        \end{align*}
        Then, using the previous results to evalute the integral, 
        \begin{align*}
          &\int_l^ux^aJ_\beta\left(\frac{k}{x}\right)
          = qt\lambda_\beta\sum_{m=0}^\infty \frac{k^{-\lambda_\beta}}{m!(m+1)!} \frac{k^{\lambda_\beta+a+1}}{(\lambda_\beta+a+1)^{m+1}}\\
          &\qquad\times\left( \Gamma\left(m+1, (\lambda_\beta+a+1)\log\left(\frac{k}{u}\right)  \right) - \Gamma\left(m+1, (\lambda_\beta+a+1)\log\left(\frac{k}{l}\right)\right)  \right),
        \end{align*}
        
        so,
        \begin{align*}
          &\left\{\left\{ \indicator{x\geq1} \frac{1}{x}J_\beta(x)  \right\} * D_\alpha(x)\right\}(k)\\
          &\qquad= \sum_{m=0}^\infty \frac{qt\lambda_\beta}{m!(m+1)!}
          \left( F(m,d_1^\alpha,\lambda_\beta,1,k)
                +F(m,d_2^\alpha,\lambda_\beta,0,k)
                +f(m,d_3^\alpha,\lambda_\beta,c_\alpha,k)
            \right)
        \end{align*}
        and
        \begin{align*}
          &\left\{\left\{ \indicator{x\geq1}J_\beta(x)  \right\} * D_\alpha(x)\right\}(k)\\
          &\qquad= \sum_{m=0}^\infty \frac{qt\lambda_\beta}{m!(m+1)!}
          \left( F(m,d_1^\alpha,\lambda_\beta,2,k)
                +F(m,d_2^\alpha,\lambda_\beta,1,k)
                +f(m,d_3^\alpha,\lambda_\beta,c_\alpha+1,k)
          \right),
        \end{align*}
        where,
        \begin{align*}
          F(m,d,\lambda,c,k) &\coloneqq \frac{d k^c\Gamma(m+1,(\lambda+c+1)\log(k\vee1)))}{(\lambda_\alpha+c+1)^{m+1}},\\
          f(m,d,\lambda,c,k) &\coloneqq \frac{d k^c\gamma(m+1,(\lambda+c+1)\log(k\vee1)))}{(\lambda_\alpha+c+1)^{m+1}}.
        \end{align*}

        Finally, notice that $\{D_\alpha * \delta_1\}(k) = D_\alpha(k)$.        
        Thus, putting these components together gives
        \begin{align*}
          C_\alpha(k)
          =&\sum_{n=0}^\infty\sum_{m=0}^\infty\sum_{r=0}^\infty 2\left( g_1(n,m,r,\lambda_-,k)+ g_2(n,m,r,\lambda_-,k)+g_3(n,m,r,\lambda_-,k)\right)\\
           &+ \sum_{m=0}^\infty \frac{qt\lambda_+}{m!(m+1)!}
             \left( F(m,d_1^\alpha,\lambda_+,1,k)
             +F(m,d_2^\alpha,\lambda_+,0,k)
             +f(m,d_3^\alpha,\lambda_+,c_\alpha,k)
             \right)\\
          &+ \sum_{m=0}^\infty \frac{qt\lambda_{-}}{m!(m+1)!}
             \left( F(m,d_1^\alpha,\lambda_{-},2,k)
             +F(m,d_2^\alpha,\lambda_{-},1,k)
             +f(m,d_3^\alpha,\lambda_{-},c_\alpha+1,k)
             \right)\\
           &+\indicator{k<1}\left(d_1^{-\alpha}+d_2^{-\alpha}\right)
             +\indicator{k\geq1}\left(d_3^{-\alpha}k^{c_{-\alpha}} \right).
        \end{align*}

  \bibliography{ref}{}
  \bibliographystyle{abbrv}        

\end{document}